\documentclass[11pt,letterpaper,leqno]{amsart}
\usepackage[pdftex]{color,graphicx} 
\usepackage{graphics,amsmath,amsfonts,amsthm,amssymb}
%%% next line tweaks caption widths.
\usepackage[margin=30pt,font=small,labelfont=bf]{caption}

%%% begin metadata
\title{On the Classification of Legendrian Rational Tangles}
\author{Gregory R. Schneider}
%%% end metadata

%%% begin user-defined macros
\def\rr{{\mathbb{R}}}
\def\zz{{\mathbb{Z}}}

\def\D{{\mathbb{D}}}
\def\B{{\mathbb{B}}}
\def\d{{\mathcal{D}}}
\def\b{{\mathcal{B}}}
\def\R{{\mathcal{R}}}
\def\r{{R}}
\def\G{{\Gamma}}
\def\g{{\gamma}}
\def\plK{{\overline{K}}}
\def\plB{{\overline{B}}}
\def\wbD{{\widetilde{D}}}
\def\pD{{\widehat{D}}}
\def\PG{{\widetilde{\Gamma}}}

\def\wbE{{\widetilde{E}}}
\def\pE{{\widehat{E}}}

\numberwithin{equation}{section}
\numberwithin{figure}{section}

\theoremstyle{plain} \newtheorem{mainthm}{Theorem}[section]
\theoremstyle{plain} \newtheorem{regcontfrac}[mainthm]{Proposition}
\theoremstyle{plain} \newtheorem{bduniqueness}[mainthm]{Proposition}
\theoremstyle{plain} \newtheorem{bdcount}[mainthm]{Proposition}
\theoremstyle{plain} \newtheorem{bddtangle}[mainthm]{Proposition}
\theoremstyle{plain} \newtheorem{unknotKq}[mainthm]{Proposition}
\theoremstyle{plain} \newtheorem{squareunknots}[mainthm]{Lemma}
\theoremstyle{plain} \newtheorem{Kqtbandr}[mainthm]{Proposition}
\theoremstyle{plain} \newtheorem{cardinalitylemma}[mainthm]{Lemma}
\theoremstyle{plain} \newtheorem{bijectionlemma}[mainthm]{Lemma}
\theoremstyle{plain} \newtheorem{trivialflypes}[mainthm]{Proposition}
\theoremstyle{plain} \newtheorem{verticalflypes}[mainthm]{Theorem}
\theoremstyle{plain} \newtheorem{oddlengthhorizontalflypes}[mainthm]{Theorem}
\theoremstyle{plain} \newtheorem{infconnhorizontalflypes}[mainthm]{Theorem}
%%% end user-defined macros

\begin{document}
\begin{abstract}
We show that under certain conditions the flyping operation on rational tangles, which produces topologically isotopic tangles, may also produce tangles which are not Legendrian isotopic when viewed in the standard contact structure on $\rr^3$.  This work is motivated by questions posed by Traynor, and incorporates techniques inspired by work of Eliashberg and Fraser.  The proofs within employ a new method of diagramming rational tangles which can also be used to easily describe the characteristic foliations of compressing discs for their complements.
\end{abstract}
\maketitle

%%% begin text
\thispagestyle{empty}
\section{Introduction}
\label{sec:introduction}

In~\cite{T98} and~\cite{T01}, Traynor examines a class of two-strand tangles in $\rr^3$ whose $S^1$-closures produce two-component links in the 1-jet space of the circle $\mathcal{J}^1(S^1)$.  Subject to the restriction that they be everywhere tangent to a particular 2-plane distribution on $\rr^3$, which induces a similar distribution on $\mathcal{J}^1(S^1)$, the question arises whether such tangles are isotopic under a classical topological operation known as a flype.  Approaching this question via generating functions, Traynor produces in~\cite{T01} the foundation of a classification of these tangles.  However, she also poses a number of open questions regarding the extension of this classification beyond her results.  It is this set of questions that provides the initial motivation for this work.

In~\cite{EF08}, Eliashberg and Fraser establish a complete classification of unknots in $\rr^3$ which obey the same tangency condition relative to the aforementioned distribution.  Their approach uses manipulations of foliations induced by this distribution on spanning discs to construct isotopies between unknots which bear the same classical invariants.  The techniques we employ later in this work are inspired by this study of disc foliations.  By examining discs which separate the strands of related tangles in $\rr^3$, with the additional restriction that their bounding unknots remain fixed through any deformation, we show that the corresponding foliations of these discs are capable of capturing information about the tangles themselves.  We then show that it is possible to use these foliations as a distinguishing invariant of Legendrian rational tangles, producing a different sort of classification than that developed by Traynor.  

More precisely, we consider a class of Legendrian rational tangles in the standard contact structure $\xi_{std}$ on $\rr^3$.  These tangles consist of pairs of properly embedded arcs in a closed 3-dimensional ball, constructed topologically via a sequence of alternating horizontal and vertical twists, subject to both the tangency condition described above and a regularity condition that we will define later.  The discs we consider, disjoint from the corresponding tangles, are also properly embedded in this ball, bounded by simple closed curves on its boundary which separate the endpoints of the corresponding tangles into pairs in either component of the complement of this curve.  We study all of these objects up to ambient isotopies of the 3-ball which fix its boundary and preserve both the contact structure on this ball and the Legendrian nature of the tangles throughout.  In this setting, the topological flype manifests as two kinds of Legendrian flypes when viewed as modifications of planar projections of tangles.   The original goal of this work was to produce a complete classification of Legendrian rational tangles under Legendrian flypes.  However, the results contained in what follows fall short of this goal.  Instead we obtain an intermediate result, more comprehensive in the class of tangles studied and free from a parity concern on the number of flypes that restricted Traynor's classification in ~\cite{T01}, yet still limited by certain artifacts of our techniques.  The extent of this classification is summarized in Theorem~\ref{mainthm}.

\begin{mainthm}
\label{mainthm}
For any positive rational number $q$, let $\G_{q^f}$ and $\G_{q^g}$ be regular Legendrian rational tangles obtained from a standard tangle $\G_q$ via some sequence of flypes, where $\G_q$ consists of $n$ twist components.
\begin{itemize}
\item For any $n$, if $\G_{q^f}$ and $\G_{q^g}$ differ only by vertical Legendrian flypes, then they are Legendrian isotopic.
\item For any $n$, if $\G_{q^f}$ and $\G_{q^g}$ are obtained from $\G_q$ by distinct numbers of horizontal Legendrian flypes performed at self-crossings of one of the strands, then they are not Legendrian isotopic.
\item For $n$ odd, if $\G_{q^f}$ and $\G_{q^g}$ are obtained from $\G_q$ by distinct numbers of horizontal Legendrian flypes, then they are not Legendrian isotopic. 
\end{itemize}
\end{mainthm}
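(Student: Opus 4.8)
The plan is to detect the number of horizontal Legendrian flypes through the characteristic foliation of a separating disc for the tangle complement. To each regular Legendrian rational tangle $\G$ obtained from $\G_q$ by flypes I would associate a properly embedded disc $D\subset\B^3$, disjoint from $\G$, whose boundary is a \emph{fixed} Legendrian unknot on $\partial\B^3$ separating the four endpoints of $\G$ into the prescribed pairs; such a disc exists, and the choice of boundary curve is common to $\G_{q^f}$ and $\G_{q^g}$, because a flype does not change the tangle as an object with fixed endpoints. Using the diagramming method developed in the earlier sections, I would put $D$ into a normal form in which the characteristic foliation $D_\xi$ is described explicitly, and, after a Giroux-type perturbation and cancellation of removable singularities, read off a numerical quantity $\tau(\G)$ — for instance the number of dividing curves of $D$, equivalently the number of hyperbolic singularities of $D_\xi$ surviving all elimination moves. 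The point is that, although $D$ is disjoint from $\G$ and its boundary invariants are fixed, this count is \emph{forced} by the isotopy class of $D$ relative to $\G$, and for the standard tangle $\G_q$ with $n$ odd it is computed directly from the twist parameters.

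Next I would track the effect of a single horizontal Legendrian flype on $D$ and on $D_\xi$. Modeling the flype as a $\pi$-rotation of a sub-tangle box, I would show that a separating disc of the flyped tangle is forced to thread the rotated box, and that in normal form this contributes a definite, positive increment to $\tau$. It is here that the hypothesis ``$n$ odd'' enters: it pins down how the separating curve must run through the two end (horizontal) twist regions — which are of the same type when $n$ is odd — and hence prevents the disc from routing around the newly introduced twisting, as it could when $n$ is even. The crux is then to upgrade $\tau$ from a feature of the normal-form disc to a genuine invariant of the Legendrian tangle: I would argue that \emph{every} admissible separating disc for the flyped tangle, when made convex, carries at least the expected number of dividing curves, by an incompressibility argument in the tangle complement together with the fact that its boundary unknot is held fixed. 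Equivalently, $\tau(\G)$ is the minimum of this count over all such discs, and the minimum is realized by the normal form.

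With $\tau$ in hand the theorem follows formally: a Legendrian isotopy of $\G_{q^f}$ to $\G_{q^g}$ fixing $\partial\B^3$ carries a separating disc of the former to one of the latter through the contact structure, hence preserves $D_\xi$ up to isotopy and therefore preserves $\tau$; since $\tau(\G_{q^f})\neq\tau(\G_{q^g})$ whenever the two tangles arise from $\G_q$ by distinct numbers of horizontal flypes, no such isotopy can exist. I expect the main obstacle to be the minimality step of the middle paragraph — ruling out a clever separating disc that ``shortcuts'' through the twist regions and sees fewer dividing curves — since this is exactly where the oddness of $n$ and the regularity condition must be used in an essential way; by contrast, the monotone dependence of $\tau$ on the flype count and its invariance under boundary-fixing Legendrian isotopies are comparatively routine once the normal form and the minimality statement are in place. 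A secondary point requiring care is verifying that the bounding unknot of $D$ can genuinely be kept fixed under any Legendrian isotopy of $\G$, which is what makes $D_\xi$ an honest invariant rather than something that can be trivialized.
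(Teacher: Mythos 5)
Your overall strategy---use a compressing disc for the tangle complement with fixed Legendrian unknot boundary and extract invariants from its characteristic foliation---is indeed the paper's starting point, but the specific invariant you propose cannot work. You define $\tau(\G)$ as a count intrinsic to the disc itself (number of dividing curves, equivalently hyperbolic singularities surviving elimination). For a convex disc with the fixed Legendrian boundary $K_q$, this count is governed by $tb(K_q)=-P$, which is the same for every flype of $\G_q$; concretely, the foliated discs $D_{q^f}$ built from the box-dot diagrams $\boxdot_{q^f}$ all have identical total numbers of elliptic and hyperbolic singularities, independent of $f$, since $f$-moves do not change the number of dots and boxes. So the claimed ``definite positive increment'' of $\tau$ under a horizontal flype is false: a flype redistributes foliation data relative to the tangle rather than changing any global count on the disc. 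The flype-detecting information lives entirely in how the \emph{strands} meet the foliation --- which elliptic points each strand passes through and in what order --- and extracting that requires embedding the tangle into the foliation of a modified disc (the bubbling, warping, and pull/push operations) and then proving that a Legendrian isotopy of tangles forces (i) equal strandwise elliptic counts (via a twisting argument along each strand using the common boundary $K_q$) and (ii) an order-preserving strandwise bijection of elliptic points respecting the shared elliptics. Your invariance step (``the isotopy carries one disc to the other, hence preserves $\tau$'') also glosses over the real difficulty: the image disc and the chosen normal-form disc of the target tangle only agree along the tangle, and comparing them requires the press operation via Giroux's theorem, not just naturality. Relatedly, your explanation of the hypothesis ``$n$ odd'' is off: it has nothing to do with how the boundary curve routes through end twist regions; it is needed because only odd-length tangles have \emph{shared} elliptics (dots common to both strands in the final remainder rectangle), which is what the positional/ordering argument leverages --- for $n$ even one gets shared hyperbolic connections instead and the method degenerates, while the ``any $n$'' statement about flypes at self-crossings of one strand follows from the cardinality count alone, with no parity assumption.

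Two further omissions: your proposal does not address the first bullet at all. The Legendrian isotopy realizing vertical flypes does not ``follow formally'' from having an invariant; it requires an explicit isotopy construction (an induction on the length of the tangle using an isotopy sequence adapted from Traynor). Finally, the minimality/incompressibility step you flag as the main obstacle would, even if completed, only normalize a quantity that --- as noted above --- is insensitive to flypes; the fix is not a better minimization but a different invariant, namely the strandwise incidence and ordering data of elliptic singularities along the tangle.
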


The remainder of our exposition is organized as follows.  In Section~\ref{sec:rationaltangles} we provide definitions of the topological constructs we will require, including a variation on Conway's construction of rational tangles from~\cite{C70}.  In Section~\ref{sec:contactgeometry} we introduce relevant features of contact geometry, taken largely from Etnyre's surveys~\cite{E03} and~\cite{E05}, closing with a discussion akin to Traynor's in~\cite{T98} coupling our conventions on rational tangles with the contact geometric setting.  In Section~\ref{sec:boxdotdiagrams} we describe the construction of \emph{box-dot diagrams}---a notational tool that will allow us to easily describe the battery of constructions involved in our work.  Section~\ref{sec:tanglesandfoliations} contains a pair of technical lemmas which enable us to utilize box-dot diagrams efficiently to study the effects of flyping on a Legendrian tangle.  In Section~\ref{sec:results} we provide precise statements and proofs of the classification results that comprise Theorem~\ref{mainthm}.  Finally, in Section~\ref{sec:conclusion} we discuss the limitations of our approach and possible directions for future research.

\section{Rational Tangles}
\label{sec:rationaltangles}
For a rational number $q$, a topological rational tangle $G_q$ consists of a pair of properly embedded, disjoint, locally unknotted arcs---the \emph{strands} of the tangle---in a closed ball $B$ in $\rr^3$.  We say two tangles are \emph{tangle isotopic} if there is an ambient isotopy of $B$, fixed along its boundary $\partial B$, carrying one tangle to the other.  

\subsection{Construction of Rational Tangles}
\label{sec:tangleconstruction}
Conway~\cite{C70} originally described the construction of a rational tangle $G_q$ from a vector representation of $q$ obtained as in Formula~\eqref{continuedfraction}.  The notation we use for this vector comes from~\cite{T98}. 
\begin{equation}
\label{continuedfraction}
(q_n,q_{n-1},\ldots,q_1) \sim q = q_1+\cfrac{1}{q_2+\cfrac{1}{\ddots+\cfrac{1}{q_n}}}
\end{equation}

We now include a brief review of this construction, employing a modified convention taken from~\cite{KL02}.  For convenience, we will view the page as the $xz$-plane, and represent tangles in $\rr^3$ via their projections into this plane.  We begin by fixing four distinct points on $\partial B$.  These points will serve as the endpoints of the strands.  The construction of any  rational tangle initiates from one of two specific tangles, the \emph{0-tangle} $G_0$ or the \emph{$\infty$-tangle} $G_{\infty}$, shown in Figure~\ref{0inftangles}.  It will later be useful to distinguish between the two strands of a tangle visually by way of a coloring of the individual strands.  Any such coloring is induced by first choosing a distinct color for each of the strands in either $G_0$ or $G_{\infty}$ at the outset of the following construction.

\begin{figure}[ht]
\begin{center}
\includegraphics[width=0.5\textwidth]{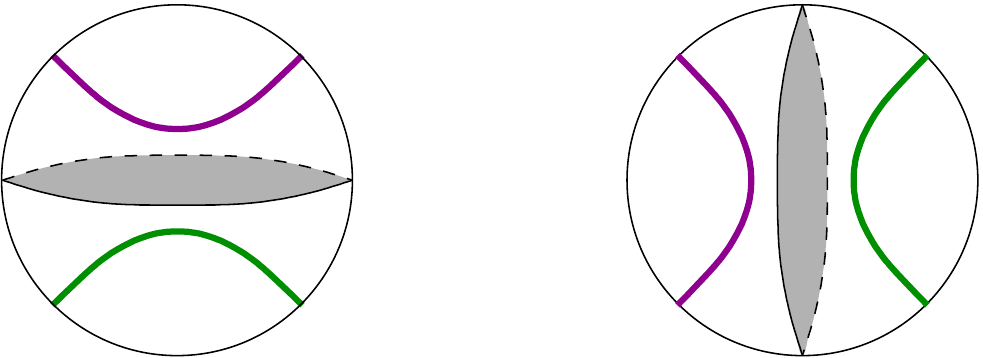}
\caption[The tangles $G_0$ and $G_{\infty}$.]{The initial tangles $G_0$ (left) and $G_\infty$ (right).}
\label{0inftangles}
\end{center}
\end{figure}

Given a vector $(q_n,q_{n-1},\ldots,q_1)\sim q$, assume first that $n$ is odd.  Accordingly, we will refer to the resulting tangle $G_q$ as an \emph{odd-length tangle}.  In this case we begin with the 0-tangle $G_0$.  Corresponding to the first component $q_n$, we employ an isotopy of the ball $B$ which fixes the left endpoints, while applying $q_n$ half-twists to the right endpoints.  We adopt the convention that this \emph{horizontal twisting} shall be performed in a counterclockwise fashion with respect to the positive $x$-axis for positive components; clockwise for negative components.  We next employ an isotopy of $B$ which fixes the upper endpoints, while applying $q_{n-1}$ half-twists to the lower endpoints.  Similarly, here we require that this \emph{vertical twisting} shall be performed in a counterclockwise fashion with respect to the positive $z$-axis for positive components; clockwise for negative components.  Figure~\ref{twistconvention} illustrates the results of performing three horizontal twists or two vertical twists under these conventions.  To complete the construction of the tangle $G_q$ we proceed iteratively through the remaining components of the vector $(q_n,q_{n-1},\ldots,q_1)$, alternating between twisting either the right endpoints or the bottom endpoints according to components $q_j$ with $j$ odd or even, respectively.  

\begin{figure}[ht]
\begin{center}
\includegraphics[width=0.4\textwidth]{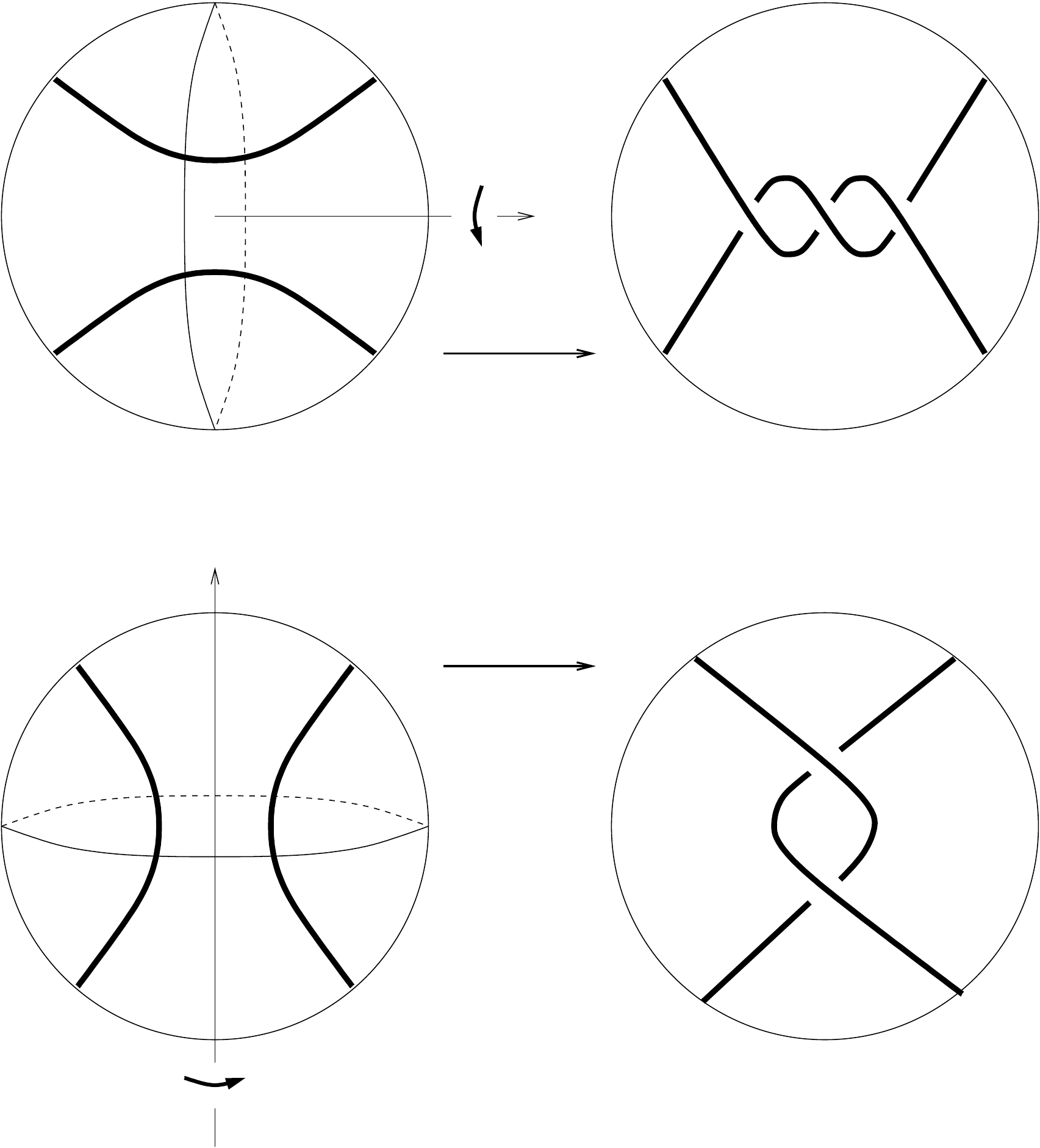}
\caption[The twisting convention for $G_q$.]{Positive horizontal (top) and vertical (bottom) twists in constructing $G_q$ produce crossings in which the overstrand has a more negative slope when viewed in the $xz$-plane.}
\label{twistconvention}
\end{center}
\end{figure}

To construct an \emph{even-length tangle}, we instead begin with the $\infty$-tangle $G_{\infty}$ and proceed iteratively through the components of the given vector, alternating between vertical twisting according to even-indexed components, starting with $q_n$, and horizontal twisting according to odd-indexed components.  Under these conventions, the component $q_1$ will always correspond to horizontal twisting in the final step of this construction, regardless of the parity of $n$.  Figure~\ref{ratltangles} provides examples of rational tangles arising from this construction.

\begin{figure}[ht]
\begin{center}
\includegraphics[width=0.5\textwidth]{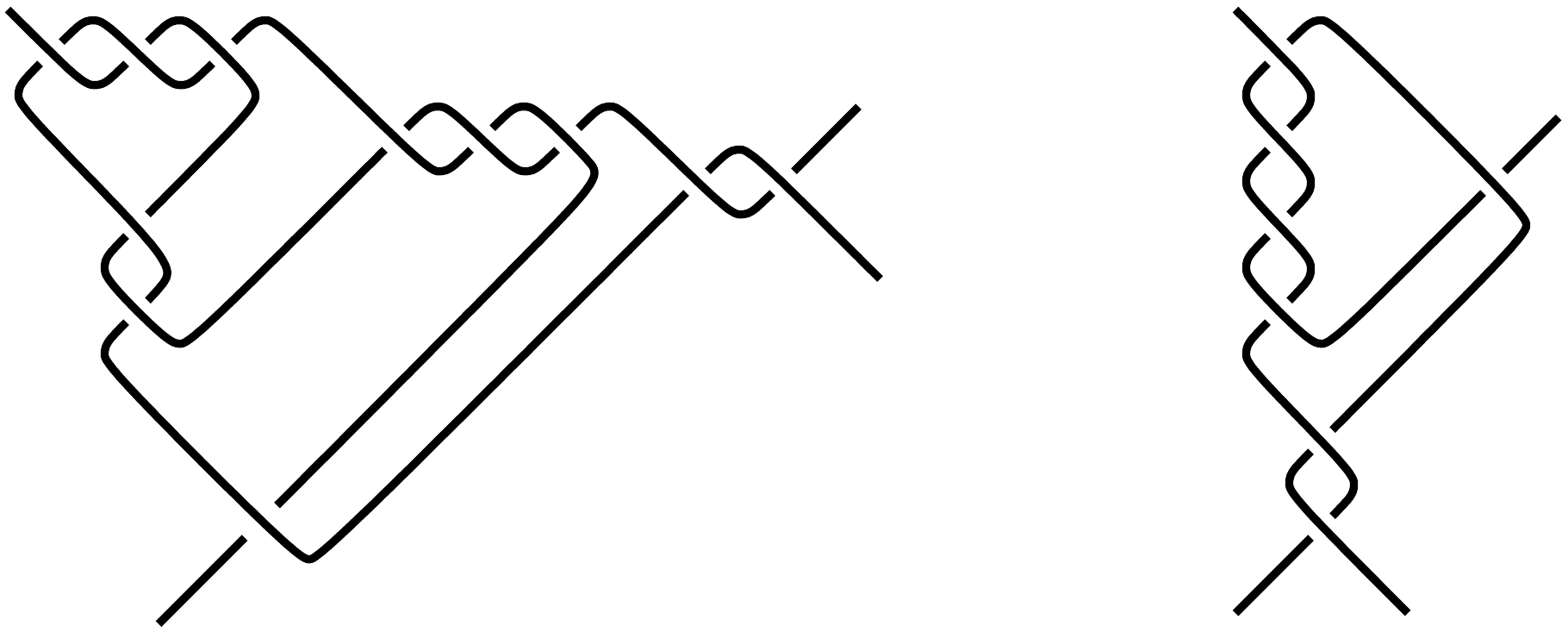}
\caption[The rational tangles $G_{(3,2,3,1,2)}$ and $G_{(4,1,2,0)}$.]{Examples of (topological) rational tangles.  Shown are the tangles $G_{(3,2,3,1,2)}$ (left) and $G_{(4,1,2,0)}$ (right).}
\label{ratltangles}
\end{center}
\end{figure}

A \emph{rational tangle} is then any tangle which is tangle isotopic to a tangle constructed as above.  We remark here that our conventions regarding how we define positive and negative twisting are reversed from~\cite{C70}.  Nevertheless, Kaufmann and Lambropoulo~\cite{KL02} prove that this modification produces an equivalent theory, one which still bears Conway's classical result that two such tangles are isotopic if and only if the continued fractions corresponding to their vectors produce the same rational number.

It is a classical result of number theory that every positive rational number $q$ has a unique continued fraction representation corresponding to a vector $(q_n,q_{n-1},\ldots,q_1)$ whose components satisfy $q_n \geq 2$, $q_1 \geq 0$, and $q_j \geq 1$ for $2\leq j < n$.  Hardy and Wright~\cite{HW71} refer to such a representation as a \emph{regular continued fraction} for $q$.  For the remainder of this work, we will consider only tangles which are constructed from the regular continued fraction of a positive rational number.

\subsection{Compressing Discs for Rational Tangles}
\label{sec:compressingdiscs}
The construction of a rational tangle described above can be extended to produce a related object that will be fundamental in our approach to proving Theorem~\ref{mainthm}.  For a vector $(q_n,q_{n-1},\ldots,q_1) \sim q$, we follow the construction of $G_q$, paying particular attention to the discs shown in Figure~\ref{0inftangles} which separate the strands of either initial tangle $G_0$ or $G_{\infty}$.  Each isotopy of the ball $B$ arising from the twisting of the endpoints of $G_q$ then induces a corresponding isotopy of this disc, as in Figure~\ref{compression}.  We may assume this disc remains embedded in $B$ and disjoint from the strands of $G_q$ throughout this process.  Once the construction of $G_q$ is complete, we are left with a properly embedded disc $D \subset B-G_q$ whose boundary does not bound a disc in $\partial B - \partial G_q$.  Such a disc is known as a \emph{compressing disc} for $B-G_q$. 

\begin{figure}[ht]
\begin{center}
\includegraphics[width=\textwidth]{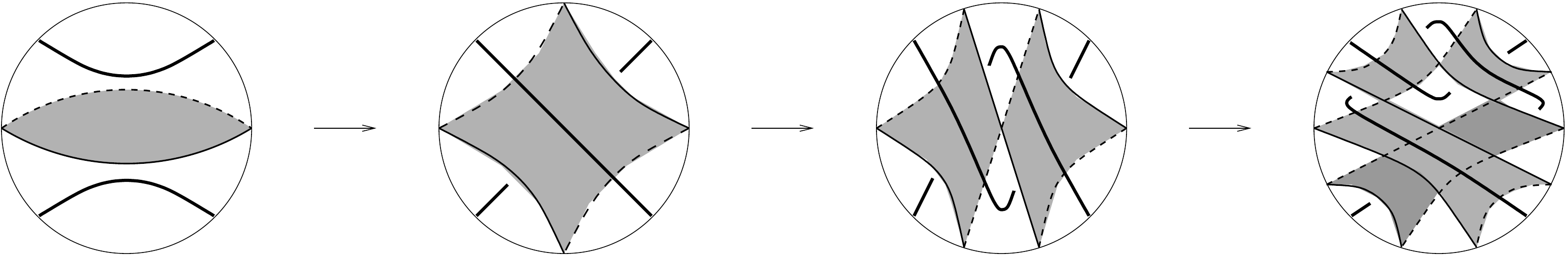}
\caption[Constructing compressing discs for rational tangles.]{A compressing disc for $B-G_{(2,1,0)}$.}
\label{compression}
\end{center}
\end{figure}  

\subsection{Flypes of Rational Tangles}
\label{sec:topflypes}
The topological concept of a \emph{flype} appears in Conway's work~\cite{C70} on rational tangles, though this term is originally due to Tait, dating back to the later half of the $19^{\textrm{th}}$ century.  A flype is performed by first enclosing a portion of a topological tangle in a sphere in such a way that the sphere is punctured exactly four times by the strands of the tangle.  Referring to a planar projection of the tangle, we also require that two adjacent arcs outside of the sphere immediately cross.  The flype is then performed by rotating the sphere in such a manner that the crossing is undone, resulting in the formation of a new crossing between the other two strands, as shown in Figure~\ref{topflypes}.  Clearly, this operation corresponds to an ambient topological isotopy of the tangle.  While we illustrate only positive twists in Figure~\ref{topflypes}, we can extend this to accommodate tangles with negative twist components as well by reversing the crossings shown in this figure.  In preparation for Section~\ref{legflypes}, we identify two species of flype in the figure, despite there being no real topological distinction between them.

\begin{figure}[ht]
\begin{center}
\resizebox{0.65\textwidth}{!}{\input{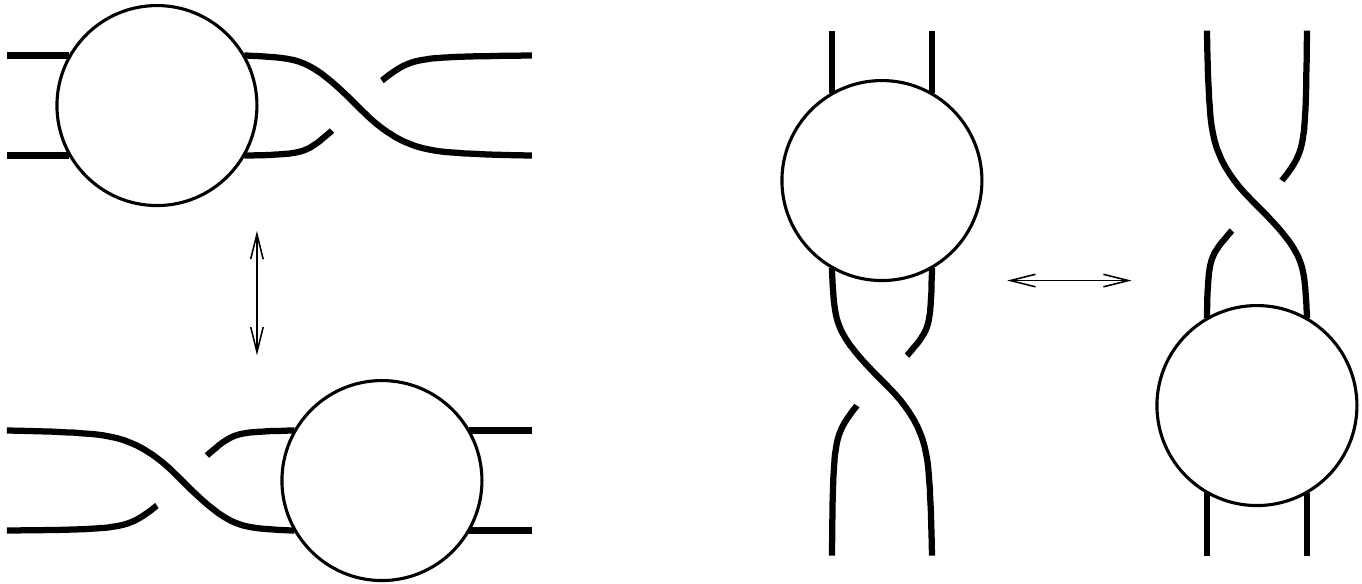_t}}
\caption[Topological flypes.]{``Horizontal'' (left) and ``vertical'' (right) flypes in the topological setting.}
\label{topflypes}
\end{center}
\end{figure}  

To specify a tangle which has been subjected to this flyping operation, we borrow a notational convention from~\cite{T98}.  By construction, every crossing in a planar diagram of a rational tangle $G_q$ constructed as above corresponds to an available flype, where the aforementioned sphere contains the \emph{subtangle} preceding the crossing used in the flype.  Thus for each twist component $q_j$ we may perform as many as $q_j$ distinct flypes, or none at all, at the corresponding set of crossings in the diagram.  The vector $(q_n^{f_n},q_{n-1}^{f_{n-1}},\ldots,q_1^{f_1})$ will be used to denote the tangle obtained from $G_q$ by performing a total of $f_j$ flypes at each twist component $q_j$, with $0\leq f_j\leq q_j$ for $1\leq j\leq n$.  In what follows we will frequently abbreviate $(q_n^{f_n},q_{n-1}^{f_{n-1}},\ldots,q_1^{f_1})$ simply as $q^f$.

\section{Contact Geometry}
\label{sec:contactgeometry}
Our ambient setting for any contact geometric concerns will be $(\rr^3,\xi_{std})$, consisting of $\rr^3$ with its \emph{standard contact structure} $\xi_{std}=dz-y\, dx$.  This contact structure can be visualized as in Figure~\ref{r3} by observing that the condition $dz-y\, dx=0$ implies that the contact plane at any point $(x,y,z)$ is spanned by $\{\partial/\partial y, \partial/\partial x + y\, \partial/\partial z\}\subset T_{(x,y,z)}\rr^3$.  The first vector in this set is always parallel to the $y$-axis, while the second is a vector of slope $y$ when projected to the $xz$-plane.  Thus along the $y$-axis the contact planes are flat at the origin and rotate about this axis in a left-handed manner, approaching vertical only as $y \rightarrow \pm\infty$.  The rest of the contact structure can then be realized by translating the planes along this axis in the $x$- and $z$-directions.  We will assume that this contact structure is endowed with a smooth upward co-orientation.

\begin{figure}[ht]
\begin{center}
\resizebox{0.4\textwidth}{!}{\input{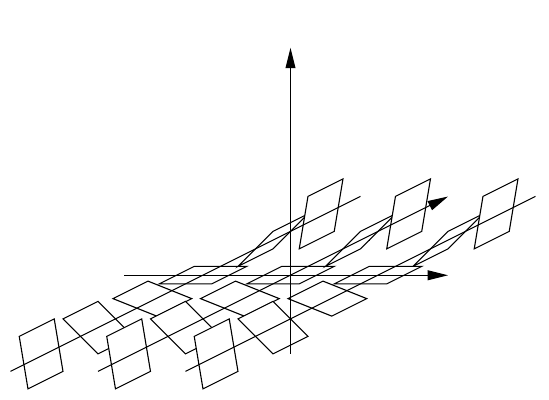_t}}
\caption[The standard contact structure $(\rr^3,\xi_{std})$.]{A portion of $\xi_{std}$ on $\r^3$.}
\label{r3}
\end{center}
\end{figure}

Throughout this work we will make use of a number of fundamental tools from the theory of contact geometry, which we will now proceed to describe.  Much of the information contained in the following subsections can be found in either~\cite{E03} or~\cite{E05}, though some of this has been adapted to our particular setting. 

\subsection{Front Projections, Legendrianization, and Lifting}
\label{sec:legendrianization}
A curve in $(\rr^3,\xi_{std})$ is said to be \emph{Legendrian} if it is everywhere tangent to the contact planes.  Given a Legendrian curve in $\rr^3$, its \emph{front projection} is the image of the curve under the map from $\rr^3$ to $\rr^2$ given by $(x,y,z)\mapsto (x,z)$.  Double-points in the front projection of a generic Legendrian curve will be transverse.  By the nature of the contact planes, the Legendrian condition forces us to resolve each of these double-points to a crossing whose overstrand has a more negative slope than its understrand.  Further, since the planes of the standard contact structure are never vertical, front projections can contain no vertical tangencies.  In place of any vertical tangencies that may occur in a topological projection, front projections will instead contain semi-cubical\footnote{I.e., cusps which resemble the graph of the equation $y^2=x^3$ near the origin.} \emph{cusps}.  Each cusp point corresponds to a point along the Legendrian curve where the tangent to the curve is parallel to the y-axis.

Conversely, if a planar diagram of a curve satisfies these two conditions---that double-points of the diagram are isolated, transverse, and resolve to crossings in which the overstrand has a more negative slope than the understrand, and the diagram contains cusp points in place of any vertical tangencies---then the diagram represents the front projection of some Legendrian curve.  Given any planar diagram of a topological curve, we can \emph{Legendrianize} the projection by rotating any crossings to match the first of these conditions, if necessary, and by replacing any vertical tangencies with cusps to obtain a valid front projection, as shown in~\cite{E05}.  If the Legendrianized diagram is parametrized with coordinate functions $(x(t),z(t))$, then we can recover the corresponding Legendrian curve via the \emph{Legendrian lifting map} $(x(t),z(t))\mapsto(x(t), dz/dx|_t, z(t))$.  This lifting map serves as a fundamental tool in studying Legendrian curves, as it allows us to represent these curves precisely via their front projections.

By way of the Legendrian lifting map, we may realize any Legendrian isotopy of a Legendrian curve in $(\rr^3,\xi_{std})$ by way of a transverse\footnote{We borrow this terminology from Eliashberg~\cite{E87}.  ``Transverse'' here refers to the nature of double-points in the projection throughout the isotopy, and should not be confused with the notion of \emph{transverse curves}---curves transverse to the contact planes---in $(\rr^3,\xi_{std})$.} isotopy of the front projection which preserves the two conditions mentioned above.  

\subsection{Legendrian Flypes of Legendrian Rational Tangles}
\label{sec:legflypes}
By our conventions, the planar projection of a rational tangle $G_q$ with positive twist components already satisfies the first requirement of a front projection.  In Legendrianizing the planar diagram of $G_q$, we will adhere to the additional convention that we replace any vertical tangencies with cusps whose tangent line at the cusp point is horizontal, as in Figure~\ref{legratltangles}.  The resulting front projection can be lifted to a \emph{Legendrian rational tangle} $\G_q$ via the Legendrian lifting map.

\begin{figure}[ht]
\begin{center}
\includegraphics[width=0.5\textwidth]{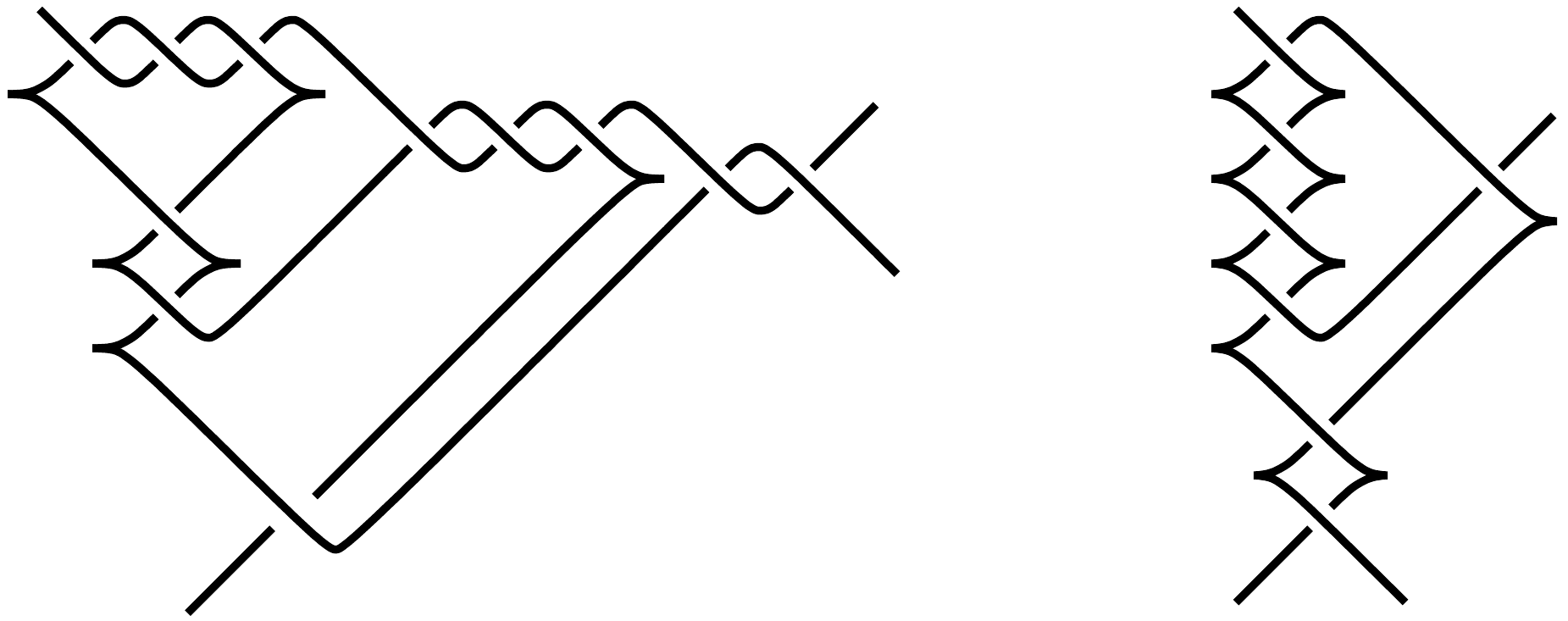}
\caption[The Legendrian rational tangles $\G_{(3,2,3,1,2)}$ and $\G_{(4,1,2,0)}$.]{Examples of Legendrian rational tangles (cf.\ Figure~\ref{ratltangles}).  Shown are the tangles $\G_{(3,2,3,1,2)}$ (left) and $\G_{(4,1,2,0)}$ (right).}
\label{legratltangles}
\end{center}
\end{figure}

In the same manner, we can lift Legendrianized versions of the flype operations shown in Figure~\ref{topflypes} to the \emph{Legendrian flypes} shown in Figure~\ref{legflypes}.  In contrast to their topological counterparts, there is a more visible distinction between \emph{vertical} and \emph{horizontal} Legendrian flypes.  The primary purpose of this paper is to investigate when these two modifications of a front projection can be realized as a Legendrian isotopy of tangles.

\begin{figure}[ht]
\begin{center}
\resizebox{0.65\textwidth}{!}{\input{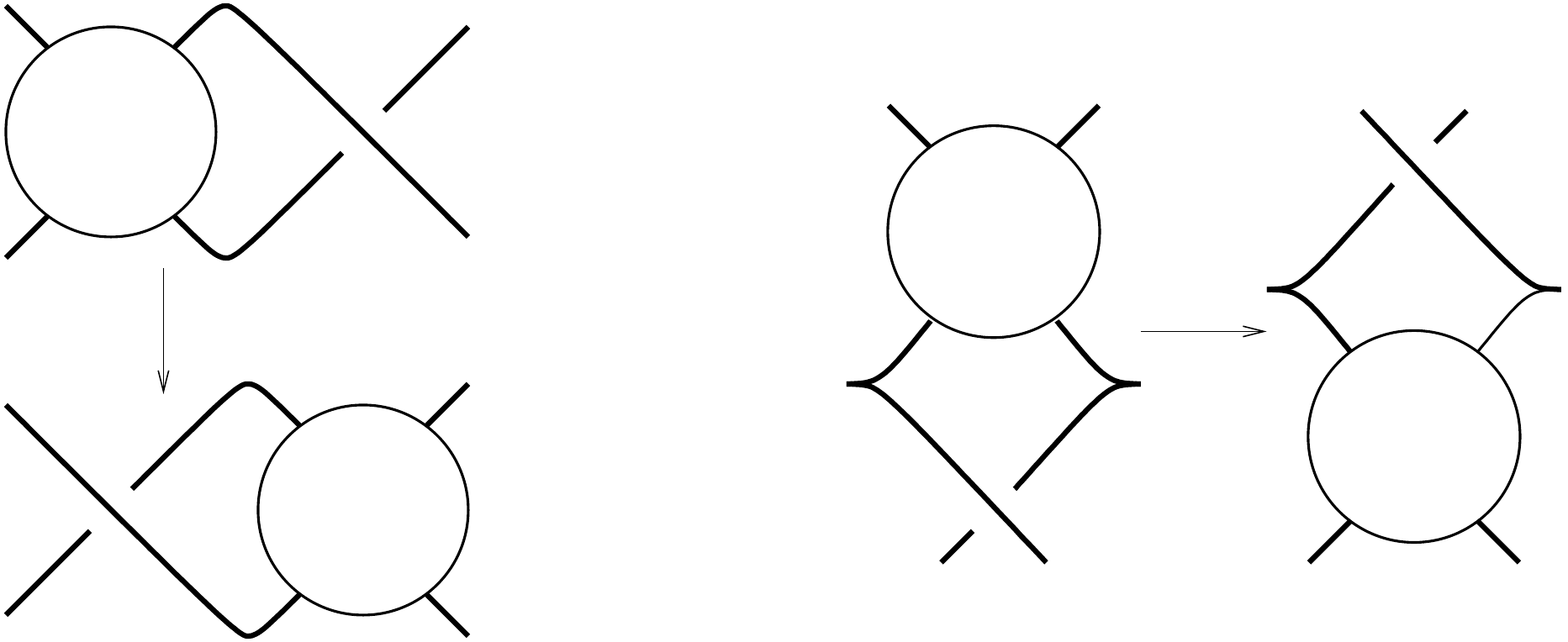_t}}
\caption[Legendrian flypes.]{Horizontal (left) and vertical (right) Legendrian flypes.}
\label{legflypes}
\end{center}
\end{figure}

\subsection{The Classical Invariants $tb$ and $r$}
\label{sec:classicalinvariants}
There are two classical numerical invariants that can be associated to an oriented, nullhomologous Legendrian knot $K$.  The \emph{Thurston-Bennequin number} $tb(K)\in\zz$ measures the net twisting of the contact planes along $K$ relative to a framing on $K$ induced by a Seifert surface.  The \emph{rotation number} $r(K)\in\zz$ measures the winding number of $K$ relative to a trivialization of the tangent bundle $TK$ induced by this Seifert surface.  We mention these descriptions here only to provide some intuitive sense of the geometric significance of these invariants.    

Given an oriented front projection of $K$, we will denote by $wr(K)$ the \emph{writhe}---the number of right-handed crossings minus the number of left-handed crossings---of the projection.  We will also denote by $D$ and $U$ the numbers of downward- and upward-oriented cusps, respectively.  Then $tb(K)$ and $r(K)$ can be computed using the following formulas, as in~\cite{E05}:
\begin{align}
tb(K) &= wr(K)-\textstyle{\frac{1}{2}}(D+U)\label{tbformula}\\
r(K) &= \textstyle{\frac{1}{2}}(D-U)\label{rformula}
\end{align}

It is noted in~\cite{T98} that these classical invariants can be defined similarly for Legendrian arcs with fixed endpoints, providing us with \emph{strandwise classical invariants} for the individual strands of a Legendrian rational tangle.  These strandwise classical invariants can also be computed as in Formulas~\eqref{tbformula} and~\eqref{rformula}, and thus take on either integral or half-integral values.  

\subsection{Characteristic Foliations}
\label{sec:characteristicfoliations}
Given a smooth surface $\Sigma \subset (\r^3,\xi)$, at any point $p\in\Sigma$ we consider the intersection $T_p\Sigma\cap\xi_p$ of the tangent plane to $\Sigma$ at $p$ with the contact plane at $p$.  This intersection will either be transverse, consisting of a line $\ell_p$ through $p$, or the two planes will be identical.  The resulting correspondence $p\mapsto\ell_p$ then defines a singular line field $L$ on $\Sigma$, with singularities at precisely those points where the tangent and contact planes agree.  The \emph{characteristic foliation} $\Sigma_{\xi}$ of a smooth surface $\Sigma$ is the singular foliation of $\Sigma$ consisting of the integral curves of the singular line field $L$.  

\begin{figure}[ht]
\begin{center}
\includegraphics[width=0.25\textwidth]{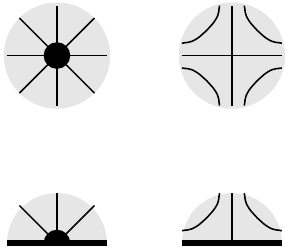}
\caption[Elliptic and hyperbolic singularities.]{Interior (top) and boundary (bottom) variations of elliptic (left) and hyperbolic (right) singularities.}
\label{singularities}
\end{center}
\end{figure} 

Generic singularities of the characteristic foliation are either \emph{elliptic} or \emph{hyperbolic}, illustrated in Figure~\ref{singularities}, and can occur either in the interior of $\Sigma$ or along its boundary.  If the surface $\Sigma$ is oriented, then the singularities are said to be \emph{positive} if the co-orientations of the contact planes agree with those of the tangent planes of $\Sigma$, and \emph{negative} if these co-orientations disagree.  The leaves of the foliation are, \textit{a priori}, Legendrian curves in $(\r^3,\xi_{std})$, and so we may use their front projections to describe a particular embedding of $\Sigma$ into $(\rr^3,\xi_{std})$ by way of the Legendrian lifting map.  We will later exploit this observation to construct particular embeddings of compressing discs, once their characteristic foliations are known.

\section{Box-Dot Diagrams}
\label{sec:boxdotdiagrams}
The diagrams described below were originally devised as a notational tool for organizing the information collected in the early stages of this research.  However, as this work developed it quickly became apparent that they in fact encoded, almost accidentally, much more than they were ever intended to.  While simple to construct, these diagrams contain a surprising amount of information about Legendrian rational tangles and their surrounding space, as we will see in Section~\ref{sec:boxdotconstructions}.  Here we will focus on the construction of box-dot diagrams, and some of their basic properties.

\subsection{Construction of Box-Dot Diagrams} Given a positive rational number $q=P/Q$ with $P$ and $Q$ relatively prime, we begin with what we will call the \emph{box-dot template} for $q$.  We first consider the rectangle $\R_q=[0,P]\times[0,Q]$ in $\rr^2$, shown in Figure~\ref{egbdtemplate} for $q=5/3$.  The box-dot template for $q$ will consist of a collection $\B\cup\D$ of marked points (\emph{boxes} and \emph{dots}) in $\R_q$, given by
\begin{align*}
\B&=\{(i-\textstyle{\frac{1}{2}},j)|i\in\{1,2,\ldots,P\},\, j\in\{0,1,\ldots,Q\}\},\\
\D&=\{(i,j-\textstyle{\frac{1}{2}})|i\in\{0,1,\ldots,P\},\, j\in\{1,2,\ldots, Q\}\}.
\end{align*}

\begin{figure}[ht]
\begin{center}
\resizebox{0.85\textwidth}{!}{\input{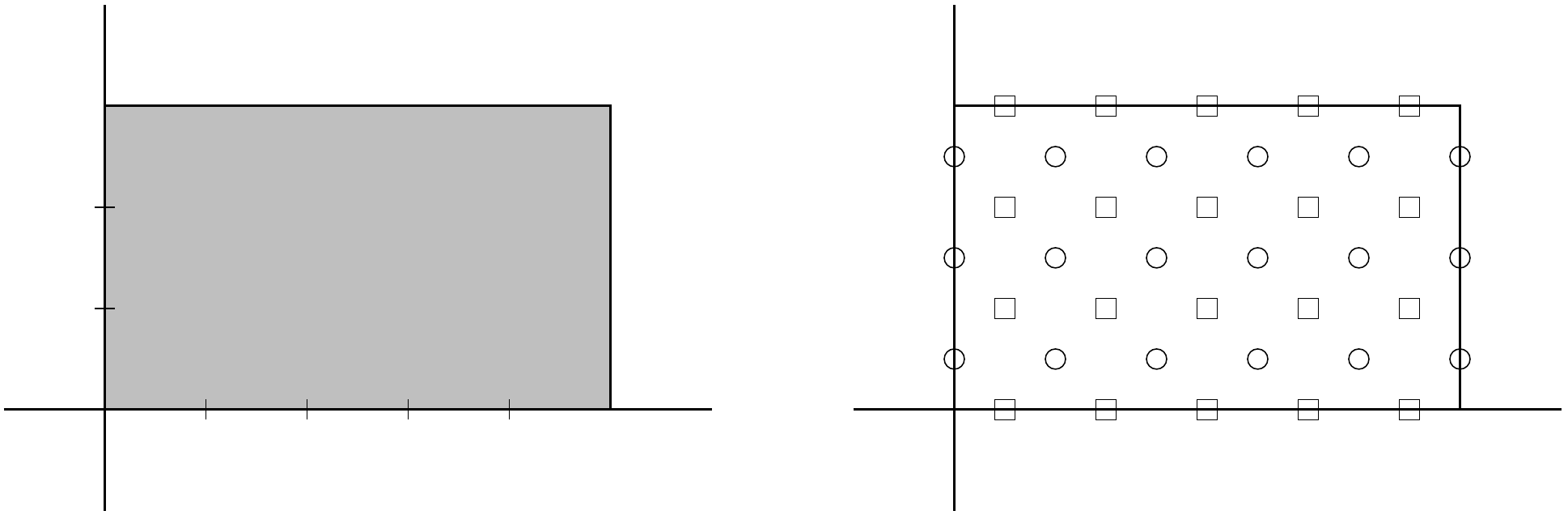_t}}  
\caption[The box-dot template.]{The rectangle $\R_q$ (left) and the box-dot template $\B\cup\D$ (right) for $q=5/3$.}
\label{egbdtemplate}
\end{center}
\end{figure}

Naturally, we will refer to the intersection of the template with either the interior or the boundary of the rectangle $\R_q$ as the \emph{interior points} or the \emph{boundary points} of the template, respectively.

We next describe how to construct a particular box-dot diagram, called the \emph{standard position} box-dot diagram for $q$.  This diagram is obtained by way of a subdivision of the rectangle $\R_q$ into squares of maximal area.  To begin the subdivision, let $\r_1=\R_q$.  If $P \geq Q$ we subdivide $\r_1$ into the union of a collection of squares of dimension $r_1=Q$, positioned at the right end of $\r_1$, with a \emph{remainder rectangle} $\r_2$ at the left.  Let $q_1$ denote the number of squares produced in this first subdivision.  We may assume the width $r_2$ of the rectangle $\r_2$ is strictly less than $r_1$, since otherwise we may continue to subdivide squares of dimension $r_1$ from $\r_2$.  If $Q=1$ then $q=P$ is an integer, in which case $q_1=P$ and $r_2=0$, and our process is complete.  If $Q > P$ then there are no squares of dimension $Q$ contained in $\r_1=\R_q$, so $q_1=0$ and $r_2=P$, and we continue. 

Since $r_2<r_1$, we then subdivide $\r_2$ into the union of a collection of $q_2$ squares of dimension $r_2$ with another remainder rectangle $\r_3$.  Here we position these $q_2$ squares at the bottom end of $\r_2$.  As above, we may assume that the height $r_3$ of the rectangle $R_3$ is strictly less than $r_2$.

We then alternate between the previous two steps, subdividing squares from the right or bottom ends of the remainder rectangle produced at the previous stage of our subdivision.  By this process, we obtain a sequence of nested rectangles $\{\r_j\}$.  Each rectangle $\r_j$ contains $q_j$ squares of dimension $r_{j}$ positioned at the right or bottom of $\r_j$ if $j$ is odd or even, respectively, together with the rectangle $\r_{j+1}$.  For $j>1$ the dimensions of $\r_j$ are $r_j\times r_{j-1}$ for $j$ even, or $r_{j-1}\times r_j$ for $j$ odd.  By construction we then have $0 \leq r_{j} < r_{j-1}$, and so we obtain a decreasing sequence of dimensions $r_1=Q,r_2,r_3,\ldots$, all of which are positive integers.  This sequence must therefore eventually terminate at some value $r_{n}$.  In particular, the final set of $q_n$ squares will either be aligned horizontally if $n$ is odd, or vertically if $n$ is even.  Moreover, by construction, the dimensions $r_j$ and counts $q_j$ of squares satisfy the following system of equations:

\begin{minipage}{\textwidth}
\begin{equation}
\left\{\begin{aligned}
P &= q_1r_1 + r_2 \\
r_1 &= q_2r_2 + r_3 \\
r_2 &= q_3r_3 + r_4 \\
r_3 &= q_4r_4 + r_5 \\
\vdots & \\
r_{n-3} &= q_{n-2}r_{n-2} + r_{n-1} \\
r_{n-2} &= q_{n-1}r_{n-1} + r_{n} \\
r_{n-1} &= q_nr_n \\ 
\end{aligned}\right.
\label{euclidalgorithm}
\end{equation}
\end{minipage}

Note that since $r_n$ divides $r_{n-1}$, it must also divide $r_{n-2}$, and so it must also divide $r_{n-3}$, and so, inductively, it must also divide both $P$ and $Q$.  Since $P$ and $Q$ were assumed to be relatively prime, we have that $r_n=1$.  We remark here that the subdivision we have described can indeed be seen as a geometric interpretation of the Euclidean algorithm.

From this construction we obtain a vector $(q_n,q_{n-1},\ldots,q_1)$ whose components are the counts $q_j$ of squares of dimension $r_{j}$ in our subdivision.  Further, it can be seen from Equations~\eqref{euclidalgorithm} that we may recover the rational number $q$ from this vector via the continued fraction 
\[ q = q_1 + \cfrac{1}{q_2+\cfrac{1}{\ddots+\cfrac{1}{q_n}}}.\]

\begin{figure}[hb]
\begin{center}
\resizebox{\textwidth}{!}{\input{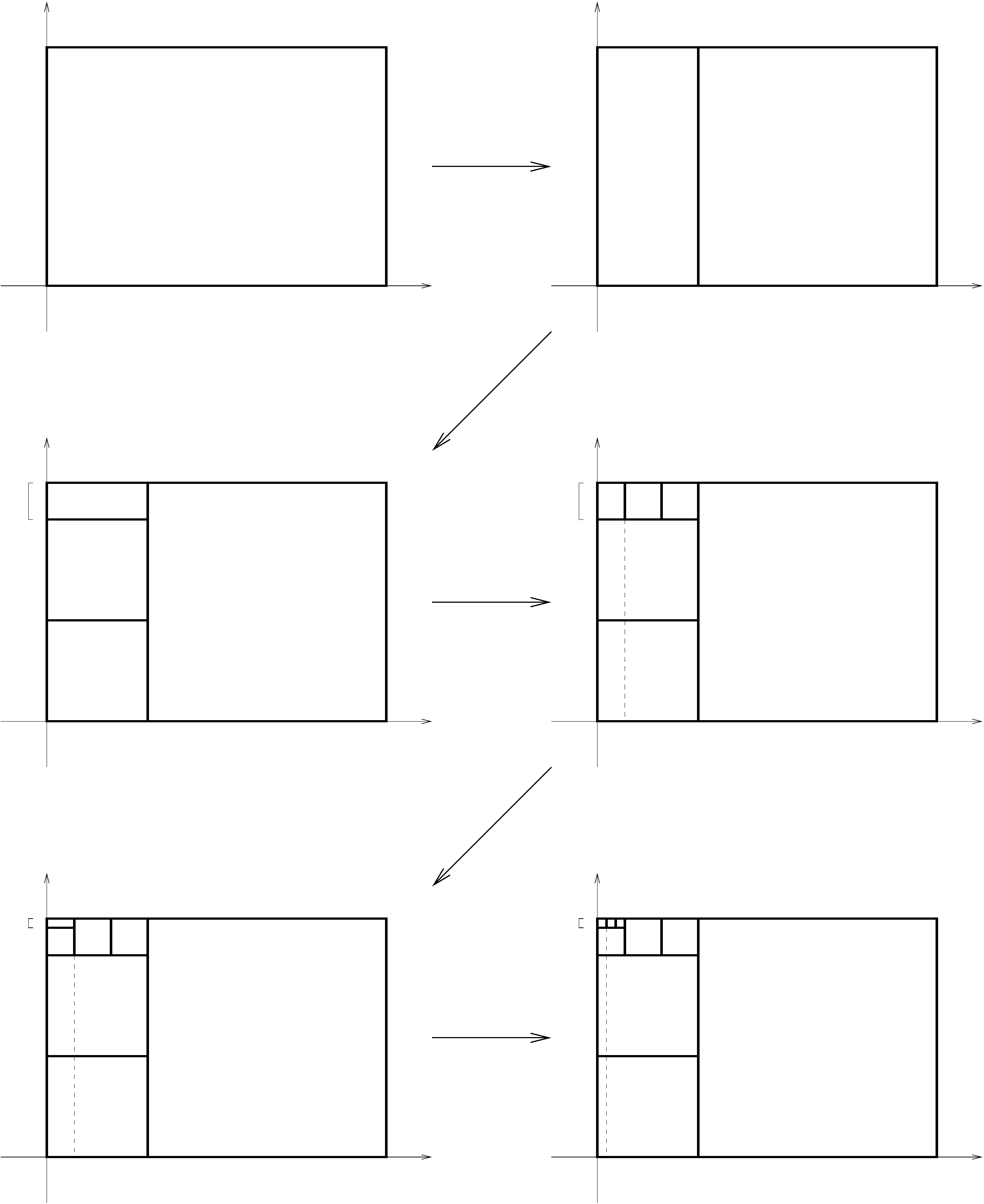_t}}  
\caption[Subdivision by squares for $q=37/26$.]{Subdivision by squares of $R_q$ for $q=37/26\sim(3,1,2,2,1)$.}
\label{egsquaring}
\end{center}
\end{figure} 

Figure~\ref{egsquaring} illustrates an example of the subdivision process described above, while simultaneously expanding the corresponding continued fraction to illustrate the various roles of our notation.  The reason for the term ``remainder rectangle'' should also be evident in this example.

\begin{regcontfrac}
\label{regcontfrac}
The square counts $q_j$ satisfy the following properties:
\begin{enumerate}
\item $q_n \geq 2$ if $n>1$, or if $n=1$ and $q > 1$
\item $q_j \geq 1$ if $1 < j < n$
\item $q_1 \geq 0$, with $q_1 = 0$ iff $q < 1$
\end{enumerate}
\end{regcontfrac}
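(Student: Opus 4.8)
The plan is to read all three statements directly off the system~\eqref{euclidalgorithm}, which is simply the Euclidean algorithm applied to the pair $(P,Q)$, together with two facts already recorded in the construction of the standard position diagram: the dimensions are strictly decreasing, $r_1 > r_2 > \cdots > r_n$, and $r_n = 1$. It is convenient to set $r_0 := P$, so that for $1 \le j \le n-1$ the $j$-th line of~\eqref{euclidalgorithm} reads $r_{j-1} = q_j r_j + r_{j+1}$ with $0 \le r_{j+1} < r_j$ --- this last bound being exactly the normalization ``the remainder rectangle may be assumed strictly smaller'' built into the subdivision --- and hence $q_j = \lfloor r_{j-1}/r_j \rfloor$; while the final line reads $r_{n-1} = q_n r_n$.

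Property (3) is then immediate: $P$ and $Q$ are positive, so $q_1 = \lfloor P/Q \rfloor \ge 0$, and $q_1 = 0$ holds precisely when $P < Q$, i.e.\ precisely when $q = P/Q < 1$. For property (2), I would fix $j$ with $1 < j < n$; then $j-1 \ge 1$ and $j \le n-1$, so $r_{j-1}$ and $r_j$ both lie in the strictly decreasing chain and $r_{j-1} > r_j \ge 1$, whence $r_{j-1}/r_j \ge (r_j+1)/r_j > 1$ and therefore $q_j = \lfloor r_{j-1}/r_j \rfloor \ge 1$.

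For property (1) I would split into cases. If $n = 1$, the system collapses to the single equation $P = q_1 Q$, forcing $Q \mid P$ and hence $Q = 1$ by coprimality; thus $q = P = q_n$ is a positive integer, so ``$q_n \ge 2$'' is equivalent to ``$q > 1$'' (with nothing to prove when $q = 1$). If $n > 1$, then $n - 1 \ge 1$ and the last equation together with $r_n = 1$ give $q_n = r_{n-1}$; since $r_{n-1} > r_n = 1$ we get $r_{n-1} \ge 2$, so $q_n \ge 2$.

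I do not anticipate a genuine obstacle here: the substantive step --- recognizing the square subdivision as the Euclidean algorithm, so that the counts $q_j$ are the partial quotients --- has already been carried out in the text. The only points that deserve care are the degenerate small cases ($n = 1$, and $n = 2$ where the range $1 < j < n$ in~(2) is vacuous) and being explicit that the inequalities $0 \le r_{j+1} < r_j$, equivalently the strict monotonicity of the sequence $r_j$, are exactly what turn the equations of~\eqref{euclidalgorithm} into the floor identities and hence into the stated bounds.
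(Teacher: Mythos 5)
Your proposal is correct and follows essentially the same route as the paper: both read the three bounds directly off the Euclidean-algorithm system~\eqref{euclidalgorithm} together with the strict decrease of the remainders $r_j$ and the fact $r_n=1$. The only cosmetic differences are that you package the equations as floor identities and prove (2) directly, where the paper argues (2) by a one-line contradiction ($q_j=0$ would force $r_{j-1}=r_{j+1}$); the substance is identical.
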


\begin{proof}
We first remark that $q_j \geq 0$ for $1 \leq j \leq n$, as these represent counts of squares in the subdivision.  If $n>1$, we have from Equations~\eqref{euclidalgorithm} that $r_n < r_{n-1}=q_nr_n$.  Thus $1<q_n$ since $r_n=1$.  If $n=1$ then $q$ is a positive integer, so $q_1 > 1$ if $q > 1$ since $q_1=q$.  This completes the proof of (1).  

Next, suppose $q_j=0$ for some $j$ with $1<j<n$.  Then by Equations~\eqref{euclidalgorithm} we have $r_{j-1} = q_j r_j + r_{j+1} = r_{j+1}$.  However $r_{j-1} > r_{j+1}$ by construction, providing a contradiction.  Thus $q_j\neq 0$ for $1<j<n$, proving (2).

Finally, (3) follows directly from our construction.  We defined $q_1$ to be $0$ if $P<Q$ and, conversely, if $q_1=0$ then no squares of dimension $Q$ are contained in $P$, implying that $P<Q$.
\end{proof}

The statement of Proposition~\ref{regcontfrac} does allow the possibility that $q=1$, with corresponding vector $(q_1)=(1)$.  More importantly, recall from Section~\ref{sec:tangleconstruction} that conditions (1--3) of Proposition~\ref{regcontfrac} are precisely the conditions required for a continued fraction expansion for $q>0$ to be regular.  Thus, by counting squares, such a subdivision can be used to recover the unique regular continued fraction representation of any positive rational number.  We now define the \emph{standard position box-dot diagram} $\boxdot_q$ of a positive rational number $q$ to be the intersection of the interior points of the box-dot template for $q$ with the edges of the squares in the above subdivision, as illustrated in Figure~\ref{egbddiagram}.

\begin{figure}[ht]
\begin{center}
\includegraphics[width=0.7\textwidth]{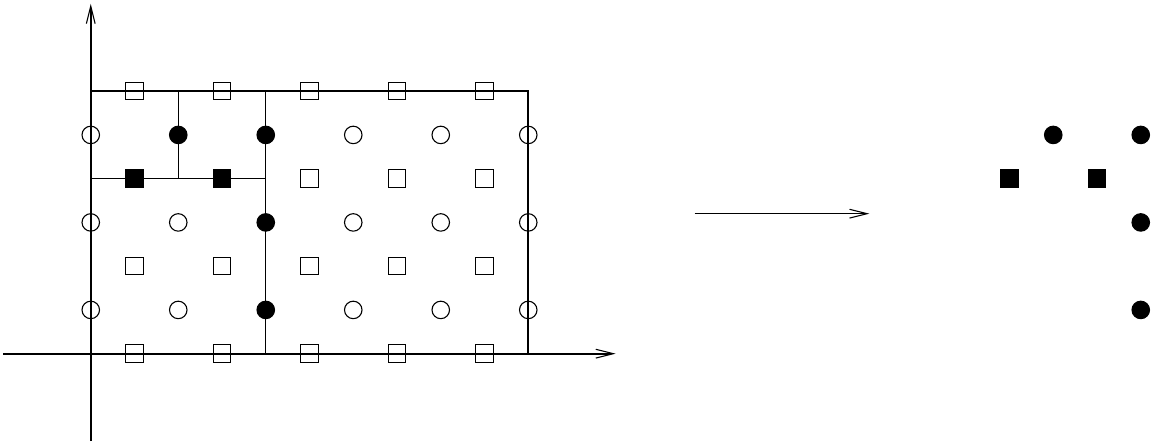}
\caption[The standard position box-dot diagram $\boxdot_{5/3}$.]{The standard position box-dot diagram $\boxdot_{5/3}$ (right).}
\label{egbddiagram}
\end{center}
\end{figure}

\begin{bduniqueness}
For any rational number $q>0$, the corresponding standard position box-dot diagram $\boxdot_q$ is unique.
\end{bduniqueness}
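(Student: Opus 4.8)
The plan is to show that, once $q$ is given, the entire subdivision of $\R_q$ by squares that produces $\boxdot_q$ is forced, so that $\boxdot_q$---which is read off from this subdivision together with the box-dot template---is unambiguous. The only data entering the construction are the integers $P$ and $Q$, the sizes, counts, and positions of the squares carved at each stage, and the number of stages $n$; I would establish uniqueness of each in turn.

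First, $P$ and $Q$ are pinned down by the requirement that $q=P/Q$ with $P,Q$ relatively prime and $Q>0$, which is the classical uniqueness of the lowest-terms representation of a rational number; hence $\R_q=[0,P]\times[0,Q]$ and the template $\B\cup\D$ are determined. Next I would show, by induction on the stage $j$, that the remainder rectangle $\r_j$, the square side $r_j$, and the count $q_j$ are determined. At stage $j$ the construction carves squares of the \emph{maximal} side length that fits inside $\r_j$; since (for $j>1$) the sides of $\r_j$ are $r_{j-1}$ and $r_j$ with $r_j<r_{j-1}$, this maximal side is forced to be $r_j$, and the construction carves as many as possible, so $q_j$ and the new remainder side $r_{j+1}$ are exactly the quotient and remainder of dividing $r_{j-1}$ by $r_j$ (with the convention $r_0:=P$, $r_1:=Q$ handling the first stage, including the case $q_1=0$ when $P<Q$). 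Uniqueness of quotient and remainder in the division algorithm, together with the stipulation $0\le r_{j+1}<r_j$ that the construction builds in, leaves no freedom here: this is precisely the observation, recorded after Equations~\eqref{euclidalgorithm}, that the subdivision implements the Euclidean algorithm. Because the positive integers $r_1>r_2>\cdots$ strictly decrease and terminate at $r_n=1$, the index $n$ is determined as well, and the fixed alternating convention (squares at the right end for $j$ odd, at the bottom for $j$ even) places each batch of $q_j$ squares in a uniquely determined region of $\R_q$. Consequently the union $E$ of the edges of all squares in the subdivision is a well-defined subset of $\R_q$, and $\boxdot_q$, being the intersection of $E$ with the interior points of the template, is unique.

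The only points requiring a little care are the degenerate stages---$Q=1$ (so $q$ is an integer and the process halts at the first stage), $q_1=0$ (so $q<1$), and the terminal stage where $r_{n-1}=q_nr_n$ divides exactly. Proposition~\ref{regcontfrac} already records the inequalities governing the $q_j$ in these situations, so no new ambiguity arises; one simply checks that the recursion still yields a single output. This bookkeeping, rather than any conceptual difficulty, is the main thing to attend to, since the substance of the argument is just the determinism of the Euclidean algorithm.
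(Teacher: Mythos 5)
Your argument is correct, but it takes a different route from the paper's. You prove uniqueness ``from the inside'': by induction on the stage $j$ you show the construction itself is deterministic---the maximal square side in $\r_j$ must be $r_j$, the count $q_j$ and new remainder $r_{j+1}$ are the quotient and remainder of dividing $r_{j-1}$ by $r_j$ (unique by the division algorithm), and the right/bottom placement convention fixes the positions---so the subdivision, and hence $\boxdot_q$, admits exactly one output. The paper instead argues ``from the outside'': it supposes two standard position diagrams exist, reads off from each a vector of square counts, observes (via Equations~\eqref{euclidalgorithm} and Proposition~\ref{regcontfrac}) that each vector is a regular continued fraction expansion of $q$, and then invokes the classical uniqueness of the regular continued fraction (Hardy--Wright) to conclude the vectors, and therefore the standard-position subdivisions, coincide. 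The two proofs are close cousins, since uniqueness of the regular continued fraction is itself a consequence of the Euclidean algorithm; your version is more self-contained and elementary (no appeal to the cited number-theoretic fact), while the paper's is shorter and reuses machinery it has already set up, in particular Proposition~\ref{regcontfrac}, which it needs anyway to identify the square counts with the regular expansion. Your attention to the degenerate stages ($Q=1$, $q_1=0$, and the exact-division terminal stage) is appropriate and consistent with the paper's conventions.
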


\begin{proof}
In brief, this proposition follows from the uniqueness of the regular continued fraction expansion for $q$.  Suppose we have two standard position box-dot diagrams for $q$.  Then the numbers of squares in each, in order of increasing area, give two vectors $(q_n,q_{n-1},\ldots,q_1)$ and $(q'_m,q'_{m-1},\ldots, q'_1)$.  Since each of these vectors must correspond to a regular continued fraction expansion for $q$, as verified by Equations~\eqref{euclidalgorithm}, we must have that $m=n$ and $q'_j=q_j$ for $1\leq j \leq n$, as the regular continued fraction for $q$ is unique.  This in turn shows that the subdivisions associated to each of the box-dot diagrams consist of the same numbers of squares of the same areas, and so the diagrams must be identical if both are in standard position.
\end{proof}

When we intersect the subdivision of $\R_q$ with the interior of the box-dot template corresponding to $q$, any vertical edges of squares in the subdivision occur at integral values of $x$, and so will intersect a collection of dots in the template.  Similarly, horizontal edges occur at integral values of $y$, and so will intersect a collection of boxes.  We next describe the box-dot diagram $\boxdot_q$ explicitly as a union $\b \cup \d$ (\emph{boxes} and \emph{dots}) of unions $\d =  \bigcup_{k \textrm{ odd}}\d_k$ and $\b = \bigcup_{k \textrm{ even}}\b_k$.  In particular, for $q=P/Q \geq 1$, the sets $\b_k$ for $k$ odd and $\d_k$ for $k$ even,  for $1\leq k < n$ are given by
\begin{align*}
\d_k &= \left\{(i,j-\textstyle{\frac{1}{2}})\left|\begin{array}{l} i\in\{r_{k-1}-q_k r_k, r_{k-1}-(q_k-1) r_k, \ldots, r_{k-1}-r_k \},\\ j\in\{Q-r_k+1,Q-r_k+2,\ldots, Q\}\\ \end{array}\right.\right\},\\
\b_k &= \left\{(i-\textstyle{\frac{1}{2}},j)\left|\begin{array}{l} i\in\{1,\ldots,r_k\},\\ j\in\{Q-r_{k-1}+r_k, Q-r_{k-1}+2r_k,\ldots,Q-r_{k-1}+q_k r_k \}\end{array}\right.\right\},
\end{align*}
where we assume $r_0=P$ for $\d_1$.  If instead $q<1$, then $\d_1=\emptyset$ and the sets $\d_k$ for $k$ odd and $\b_k$ for k even, remain the same for $1<k<n$, assuming $r_2=P$ for $\d_2$.  In either case, to ensure that our diagram consists only of interior points of the template, the last set, either $\d_n$ or $\b_n$ for $n$ respectively odd or even, is given explicitly as
\begin{align*}
\d_n &= \left\{(i,Q-\textstyle{\frac{1}{2}})| i\in\{1, 2, \ldots, q_n-1\} \right\},\\
\b_n &= \left\{(\textstyle{\frac{1}{2}},j)| j\in\{Q-q_n+1, Q-q_n+2,\ldots,Q-1\}.\right\}.
\end{align*}

\begin{bdcount} 
\label{bdcount}
The standard position box-dot diagram $\boxdot_q$ for $q=P/Q$ contains exactly $P-1$ dots and $Q-1$ boxes.
\end{bdcount}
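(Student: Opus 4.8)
The plan is to read the two cardinalities straight off the explicit description of $\boxdot_q$ given above as the union $\b\cup\d$ with $\d=\bigcup_{k\,\textrm{odd}}\d_k$ and $\b=\bigcup_{k\,\textrm{even}}\b_k$, and then to collapse the resulting sums using the system~\eqref{euclidalgorithm}. The one identity that drives everything is $q_kr_k=r_{k-1}-r_{k+1}$ for $1\le k\le n$, obtained directly from the lines of~\eqref{euclidalgorithm} once we adopt the conventions $r_0=P$ (so the first line reads $q_1r_1=r_0-r_2$) and $r_{n+1}=0$ (so the last line reads $q_nr_n=r_{n-1}-r_{n+1}$).

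First I would note that a box and a dot can never coincide, a box having half-integer first coordinate and integer second coordinate while a dot has these reversed; hence $|\boxdot_q|=|\b|+|\d|$ and it suffices to count dots and boxes separately. Next I would verify that the sets $\d_k$ are pairwise disjoint. Reading off the displayed formula and using $r_{k-1}-q_kr_k=r_{k+1}$, the first coordinates appearing in $\d_k$ all lie in the integer interval $[\,r_{k+1},\,r_{k-1}-r_k\,]$ for odd $k<n$, and in $\{1,\dots,q_n-1\}$ with $q_n-1<q_nr_n=r_{n-1}$ for $k=n$; since $r_1>r_2>\dots>r_n=1$, these intervals are nested from the outside in and are pairwise disjoint, so the $\d_k$ themselves are disjoint. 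The identical argument with the two coordinates interchanged shows the $\b_k$ are pairwise disjoint. Therefore $|\d|=\sum_{k\,\textrm{odd}}|\d_k|$ and $|\b|=\sum_{k\,\textrm{even}}|\b_k|$, and from the formulas $|\d_k|=|\b_k|=q_kr_k$ for $k<n$, while the terminal set has $|\d_n|=|\b_n|=q_n-1=q_nr_n-1$ since $r_n=1$.

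Finally I would run the telescope, splitting on the parity of $n$. If $n$ is odd, the odd indices are $1,3,\dots,n$, so $\sum_{k\,\textrm{odd}}q_kr_k=\sum_{k\,\textrm{odd}}(r_{k-1}-r_{k+1})=r_0-r_{n+1}=P$, and since the terminal set $\d_n$ already carries a ``$-1$'' we get $|\d|=P-1$; the even indices are $2,4,\dots,n-1$, there is no $\b_n$, and $|\b|=\sum_{k\,\textrm{even}}q_kr_k=r_1-r_n=Q-1$. If $n$ is even the two parities trade places: now $\b_n$ carries the ``$-1$'', so $|\b|=\bigl(\sum_{k\,\textrm{even}}q_kr_k\bigr)-1=(r_1-r_{n+1})-1=Q-1$, while $|\d|=\sum_{k\,\textrm{odd}}q_kr_k=r_0-r_n=P-1$. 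In either case $\boxdot_q$ has exactly $P-1$ dots and $Q-1$ boxes.

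The one place I expect to have to be careful is precisely this bookkeeping: making sure the single ``$-1$'' contributed by the truncated terminal set $\d_n$ or $\b_n$ lands in the same slot where the telescope would on its own produce a ``$-1$'' (by stopping at $r_n=1$ rather than at $r_{n+1}=0$), so that the two effects neither double-count nor cancel. The degenerate cases fold in along the way: for $n=1$ one has $Q=1$ and $\boxdot_q=\d_n$ comprises $P-1$ dots and no boxes, and for $q<1$ one has $q_1=0$ and $\d_1=\emptyset$, which is exactly what $q_1r_1=0=r_0-r_2$ predicts once $r_2=P$, so the same computation goes through unchanged.
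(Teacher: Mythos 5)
Your proposal is correct and follows essentially the same route as the paper's proof: read off $|\d_k|=|\b_k|=q_kr_k$ (with the terminal set contributing $q_n-1$) and telescope via the relations $q_kr_k=r_{k-1}-r_{k+1}$ from Equations~\eqref{euclidalgorithm}. Your explicit disjointness check and the even-$n$ bookkeeping are just slightly more detailed versions of what the paper leaves implicit or dismisses as ``similar computations.''
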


\begin{proof}
By the above descriptions, we have
\begin{align*}
|\d_1|&=q_1 Q = P-r_2\textrm{ if }q>1\textrm{ or }|\d_1|=0=P-r_2\textrm{ if }q<1,\\
|\d_k|&=q_k r_k\textrm{ for }k\textrm{ odd, }1<k<n,\\
|\b_k|&=r_k q_k\textrm{ for }k\textrm{ even, }1<k<n,\\
|\d_n|&=|\b_n|=q_n-1.
\end{align*}
Assuming that $n$ is odd, the total number of dots is then given by
\begin{align*}
|\d| &= |\d_1| + |\d_3| + |\d_5| + \ldots + |\d_{n-2}| + |\d_n|\\
&= |\d_1| + q_3r_3 + q_5r_5 + \ldots + q_{n-2}r_{n-2} + q_n - 1\\
&= (P-r_2) + (r_2-r_4) + (r_4-r_6) + \ldots + (r_{n-3}-r_{n-1}) + r_{n-1} - 1\\
&= P-1,
\end{align*}
while the total number of boxes is given by
\begin{align*}
|\b| &= |\b_2| + |\b_4| + |\b_6| + \ldots + |\b_{n-3}| + |\b_{n-1}|\\
&= q_2r_2 + q_4r_4 + q_6r_6 + \ldots + q_{n-3}r_{n-3} + q_{n-1}r_{n-1}\\
&= (Q-r_3) + (r_3-r_5) + (r_5-r_7) + \ldots + (r_{n-4}-r_{n-2}) + (r_{n-2}-r_n)\\
&= Q-1.
\end{align*}
For $n$ even, the proof follows from similar computations.
\end{proof}

By Proposition~\ref{bdcount}, we can recover the rational number $q$ from a box-dot diagram via the formula $P/Q=\left(|\d|+1\right)/\left(|\b|+1\right)$.  Thus we lose no information by suppressing the boundary points as we have, since the corresponding template can always be reconstructed from the interior diagram by first recovering $P/Q$.  

\subsection{F-moves on Box-Dot Diagrams}
\label{sec:fmoves}
For the standard position box-dot diagram, we required that the squares in our subdivision be positioned exclusively at the right and bottom ends of each remainder rectangle.  If instead we allow squares of maximal area to be positioned at either end, or both ends, of the remainder rectangles, we obtain what we will call a \emph{free subdivision} of $\R_q$ by squares.  If $q$ corresponds to the vector $(q_n,q_{n-1},\ldots,q_1)$, then for any $j$ with $1 \leq j < n$ we may position $f_j$ of the $q_j$ squares of dimension $r_j$ at the left or top end, accordingly, of the remainder rectangle $\r_j$, for any $f_j$ with $0\leq f_j \leq q_j$.  Since the $q_n$ squares of dimension one uniquely subdivide the remainder rectangle $\r_n$, there is thus a total of $\prod_{j=1}^{n-1}(q_j+1)$ distinct free subdivisions of the rectangle $\R_q$.  We define a general \emph{box-dot diagram} for $q$ to be the intersection of any of these free subdivisions of $\R_q$ with the box-dot template for $q$.

We now define a set of operations, called \emph{f-moves}, that we can perform on the standard position box-dot diagram $\boxdot_q$ to generate any other box-dot diagram for $q$.  The f-moves on $\boxdot_q$ are in one-to-one correspondence with vectors $f=(0, f_{n-1},f_{n-2},\ldots, f_1)$ whose components satisfy $0 \leq f_j \leq q_j$ for $0 \leq j \leq n-1$.  The image of the f-move, denoted $\boxdot_{q^f}$, will be the intersection of the box-dot template for $q$ with a particular free subdivision of $R_q$ constructed according to the vector $f$.

  Suppose first that $f=(0,\ldots,0,1,0,\ldots,0)$, with $f_j=1$ for some $j \neq 1$ and all other components equal to zero.  The diagram $\boxdot_{q^f}$ will agree with $\boxdot_q$ over all squares corresponding to components $q_{j-1}, q_{j-2},\ldots,q_{1}$.  The remainder rectangle $\r_j$ is then modified by first reflecting the remainder rectangle $\r_{j+1}$ along its vertical or horizontal bisector, depending on whether $j$ is even or odd, respectively.  The reflected rectangle $\r_{j+1}$ is then transposed with the adjacent square in $\r_j$ lying either below or to the right of $\r_{j+1}$, again according to whether $j$ is even or odd, respectively.  These operations are illustrated in Figure~\ref{fmoves}.

\begin{figure}[ht]
\begin{center}
\resizebox{0.5\textwidth}{!}{\input{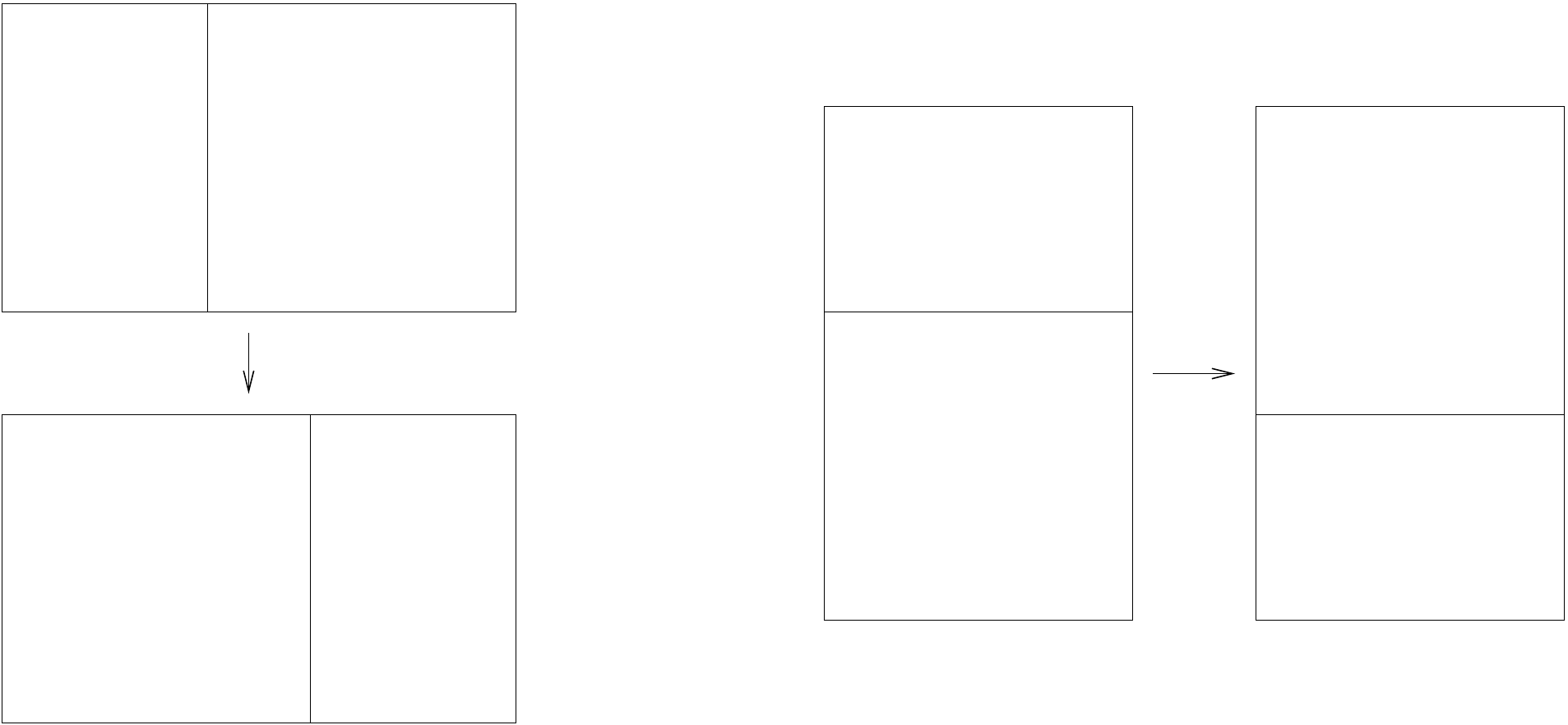_t}}
\caption[$f$-moves for box-dot diagrams.]{The two types of basic $f$-moves for box-dot diagrams, corresponding to $j$ odd (left) and $j$ even (right).  Shown in each case is the remainder rectangle $\r_j$.}
\label{fmoves}
\end{center}
\end{figure}

To construct the image of the $f$-move corresponding to a general vector $f=(0,f_{n-1},\ldots,f_1)$, we begin first with the portion of $\boxdot_q$ corresponding to the component $q_{n-1}$, applying the above process to the rectangle $\r_{n-1}$ a total of $f_{n-1}$ times.  We then apply this process to the rectangle $\r_{n-2}$ a total of $f_{n-2}$ times, then to $\r_{n-3}$ a total of $f_{n-3}$ times, and so on.  Iterating this process through all components of the vector corresponding to $q$ will result in the desired diagram $\boxdot_{q^f}$.  

\begin{figure}[ht]
\begin{center}
\includegraphics[width=0.65\textwidth]{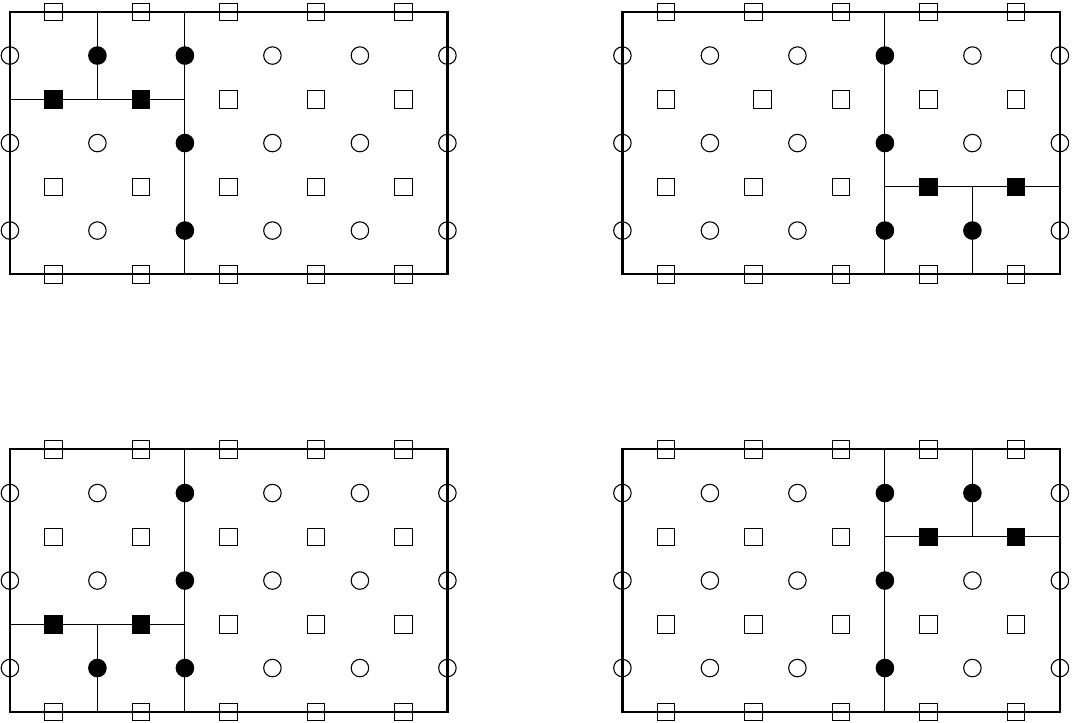}
\caption[The box-dot diagrams $\boxdot_{(2,1,1)}$, $\boxdot_{(2,1^1,1)}$, $\boxdot_{(2,1^1,1^1)}$, and $\boxdot_{(2,1,1^1)}$.] {Counterclockwise from top left: the box-dot diagrams, with templates, $\boxdot_{(2,1,1)}$, $\boxdot_{(2,1^1,1)}$, $\boxdot_{(2,1^1,1^1)}$, and $\boxdot_{(2,1,1^1)}$.}
\label{egbddfmoves}
\end{center}
\end{figure}

Figure~\ref{egbddfmoves} illustrates all of the possible box-dot diagrams for $q=5/3$.  As in Section~\ref{sec:topflypes}, when we wish to specify a particular f-move $\boxdot_{q^f}$ explicitly relative to the components of the vector corresponding to $q$, we will employ the notation $(q_n,q_{n-1}^{f_{n-1}},\ldots,q_1^{f_1})$ in place of $q^f$, omitting superscripts corresponding to $f_j$'s which are zero.

\section{Applications of Box-Dot Diagrams}
\label{sec:boxdotconstructions}
Despite their elementary construction, the box-dot diagrams defined in the previous section contain the information necessary to construct all of the topological and contact geometric objects associated to a rational tangle in sections~\ref{sec:rationaltangles} and~\ref{sec:contactgeometry}.  To illustrate these constructions, we include throughout this section a running example with $q=5/3\sim(2,1,1)$.  

\subsection{Topological Rational Tangles}  
\label{sec:boxdottoptangles}
Beginning with the box-dot diagram $\boxdot_q$, we reconstruct the associated box-dot template and subdivision by squares.  We then inscribe each square in the subdivision with a single crossing such that the endpoints of the strands of each crossing coincide with the corners of the corresponding square.  As a matter of convention we require the overarc of each crossing to have a more negative slope than the underarc.  When two strands from distinct crossings intersect at adjacent corners of the corresponding squares, we smooth the resulting intersection to create a single arc which does not intersect the boundary of the remainder rectangle containing the adjacent squares.  Doing this at each adjacent pair of corners results in a planar projection of a tangle, as in Figure~\ref{egbddtangle}.

\begin{figure}[ht]
\begin{center}
\includegraphics[width=0.8\textwidth]{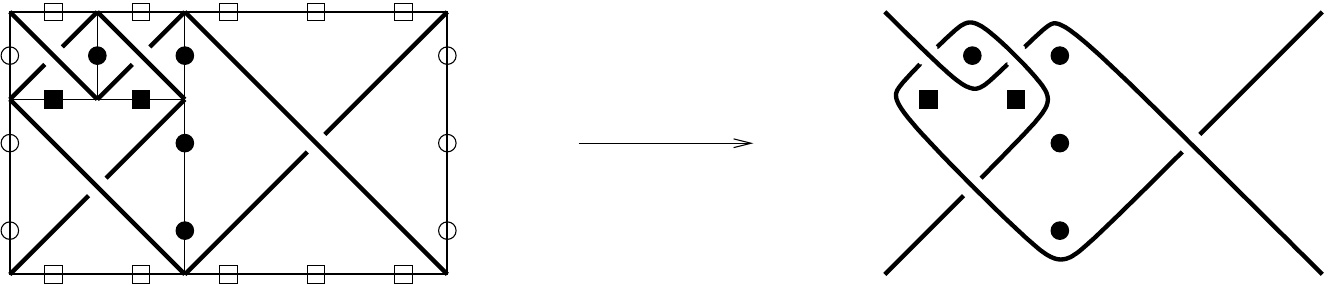}
\caption[Constructing $G_{5/3}$ from $\boxdot_{5/3}$.]{Constructing $G_{(2,1,1)}$ from $\boxdot_{(2,1,1)}$.}
\label{egbddtangle}
\end{center}
\end{figure}

\begin{bddtangle}
The tangle associated to the standard position box-dot diagram $\boxdot_{q}$ is the rational tangle $G_q$.
\end{bddtangle}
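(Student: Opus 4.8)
The plan is to show that the tangle diagram obtained from the standard position box-dot diagram $\boxdot_q$ via the square-inscription procedure of Section~\ref{sec:boxdottoptangles} agrees, up to tangle isotopy, with the diagram of $G_q$ produced by Conway's construction in Section~\ref{sec:tangleconstruction}. The key observation linking the two is that the square subdivision of $\R_q$ underlying $\boxdot_q$ is literally a geometric encoding of the continued fraction of $q$: by the remark following Equations~\eqref{euclidalgorithm} and by Proposition~\ref{regcontfrac}, the square counts $(q_n,q_{n-1},\ldots,q_1)$ read off from this subdivision are exactly the components of the regular continued fraction vector for $q$. So it suffices to prove that inscribing crossings into the squares and smoothing reproduces, stage by stage, the alternating horizontal/vertical twisting in the construction of $G_q$.

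First I would set up an induction on $n$, the number of nested remainder rectangles $\r_1 \supset \r_2 \supset \cdots \supset \r_n$, reading the subdivision from the innermost rectangle $\r_n$ outward. The base case is $\r_n$, which contains $q_n$ unit squares aligned horizontally if $n$ is odd and vertically if $n$ is even; inscribing one crossing per square and smoothing between adjacent corners yields exactly $q_n$ half-twists in the appropriate direction — this is the same as applying the first twisting operation ($q_n$ horizontal half-twists starting from $G_0$ when $n$ is odd, or $q_n$ vertical half-twists starting from $G_{\infty}$ when $n$ is even, matching the conventions of Section~\ref{sec:tangleconstruction}). For the inductive step, assume the subdiagram of $\boxdot_q$ lying inside $\r_{j+1}$ yields (up to tangle isotopy) the subtangle corresponding to $(q_j, q_{j-1}, \ldots, q_1)$ — more precisely, the tangle built from the components with indices $\le j$. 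The rectangle $\r_j$ consists of $q_j$ squares of dimension $r_j$ positioned at the right of $\r_j$ (if $j$ is odd) or the bottom (if $j$ is even), together with $\r_{j+1}$. Inscribing and smoothing across the $q_j$ new squares adjoins $q_j$ half-twists to the appropriate pair of endpoints (right endpoints if $j$ odd, bottom endpoints if $j$ even), which is precisely the twisting operation applied at stage $q_j$ in Conway's construction. The one point requiring care is the \emph{smoothing at the interface} between the innermost square of the new block and the boundary of $\r_{j+1}$: one must check that this smoothing matches up the strand endpoints correctly and does not introduce spurious crossings, i.e.\ that the two arcs emanating from the shared corners can be joined inside the containing rectangle disjointly. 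This follows because, by construction, the box-dot template points lie at the midpoints of square edges, so the arcs inscribed in neighboring squares meet at a common corner with compatible slopes (the overarc convention being applied uniformly), and the smoothing is canonical.

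Finally, I would verify the sign/handedness conventions align. The inscription convention requires every overarc to have more negative slope than the underarc, which is exactly the crossing type produced by \emph{positive} horizontal and vertical twists per Figure~\ref{twistconvention}; since we restricted throughout (end of Section~\ref{sec:tangleconstruction}) to regular continued fractions of positive rationals, all components $q_j$ are nonnegative and all twisting is positive, so there is no sign discrepancy to resolve. Assembling the induction: after processing $\r_1 = \R_q$ we have performed $q_1$ horizontal half-twists on top of the tangle built from $(q_n,\ldots,q_2)$, which is by definition the construction of $G_q$ (and when $q_1 = 0$, i.e.\ $q<1$, the final block is empty, matching $\d_1 = \emptyset$ in the explicit description of $\boxdot_q$). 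Hence the tangle associated to $\boxdot_q$ is tangle isotopic to $G_q$.

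The main obstacle I anticipate is the bookkeeping in the inductive step — keeping straight which pair of endpoints (left/right versus top/bottom) gets twisted at each stage as a function of the parity of $j$, and confirming that the orientation of the square block (right-aligned versus bottom-aligned) in the standard position subdivision is exactly the one that reproduces the counterclockwise twisting convention rather than its mirror. This is a matter of carefully matching two sets of conventions (the subdivision conventions of Section~\ref{sec:boxdotdiagrams} and the twisting conventions of Section~\ref{sec:tangleconstruction}) rather than a deep difficulty, and a labeled figure tracking one full inductive step — analogous to Figure~\ref{egbddtangle} — should make it transparent.
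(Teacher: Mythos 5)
Your proposal is correct and follows essentially the same route as the paper: read the square subdivision from the innermost block of $q_n$ unit squares outward, identify each block of $q_j$ squares with the $q_j$ horizontal or vertical half-twists of the corresponding stage of Conway's construction, and observe that the crossing conventions agree. The paper states this matching more briefly (without formal induction) and handles the even-length case by noting only the change in the initial block's orientation, exactly as in your base case.
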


\begin{proof}
The free subdivision by squares corresponding to $\boxdot_q$ yields a unique vector $(q_n,q_{n-1},\ldots,q_1)$ for $q$.  By the construction of $\boxdot_q$, in the case that $n$ is odd, this subdivision will contain a row of $q_n$ squares of dimension one arranged in a row at the top left corner of the rectangle $\R_q$.  Below this will be a column of $q_{n-1}$ squares, followed by an alternating sequence of rows of squares on the right and columns of squares below according to the vector for $q$.  Inscribing each square with a single crossing yields a tangle that can be described as an alternating sequence of horizontal and vertical twists in the same order and number as in the construction of $G_q$ given in Section~\ref{sec:tangleconstruction}.  As the crossings obey the same convention in both constructions, the tangle arising from $\boxdot_q$ is, in fact, $G_q$.  

The proof in the case where $n$ is even differs only in that the subdivision instead contains a column of $q_n$ squares of dimension one arranged vertically in the top right corner of $\r_q$, corresponding to the initial vertical twisting of an even-length tangle.
\end{proof} 

This construction also provides a correspondence between diagrams $\boxdot_{q^f}$ and planar diagrams of flyped tangles $G_{q^f}$.  The reflection and transposition steps used to describe f-moves in Section~\ref{sec:fmoves} correspond to the rotation of the sphere described in Section~\ref{sec:topflypes} which resulted in a transposed crossing. 

\subsection{Legendrian Rational Tangles}
\label{sec:boxdotlegtangles}
By our conventions, the crossings in the projection of $G_q$ arising from $\boxdot_q$ automatically conform to the first criterion for a front projection.  To establish the second, we need only replace any vertical tangencies with cusps.  Such tangencies occur precisely at vertically adjacent pairs of squares in the corresponding subdivision.  As discussed in Section~\ref{sec:legflypes}, we will modify these points so that the resulting cusps have horizontal tangents.

The resulting front projection may then be lifted via the Legendrian lifting map of Section~\ref{sec:legendrianization} to a Legendrian embedding $\G_q$ of the topological tangle $G_q$.  Similarly, by Legendrianizing the projection of $G_{q^f}$ obtained from any diagram $\boxdot_{q^f}$ we produce a correspondence between these diagrams and the Legendrian flypes $\G_{q^f}$ of $\G_q$ defined in Section~\ref{sec:legflypes}.  This correspondence is illustrated in Figure~\ref{egbddflypes}.  We will refer to any tangle $\G_{q^f} \subset \rr^3$ constructed from $\boxdot_{q^f}$ in this manner as a \emph{regular Legendrian rational tangle}, reflecting its connection to a box-dot diagram, and the regular continued fraction encoded therein. 

\begin{figure}[ht]
\begin{center}
\includegraphics[width=0.7\textwidth]{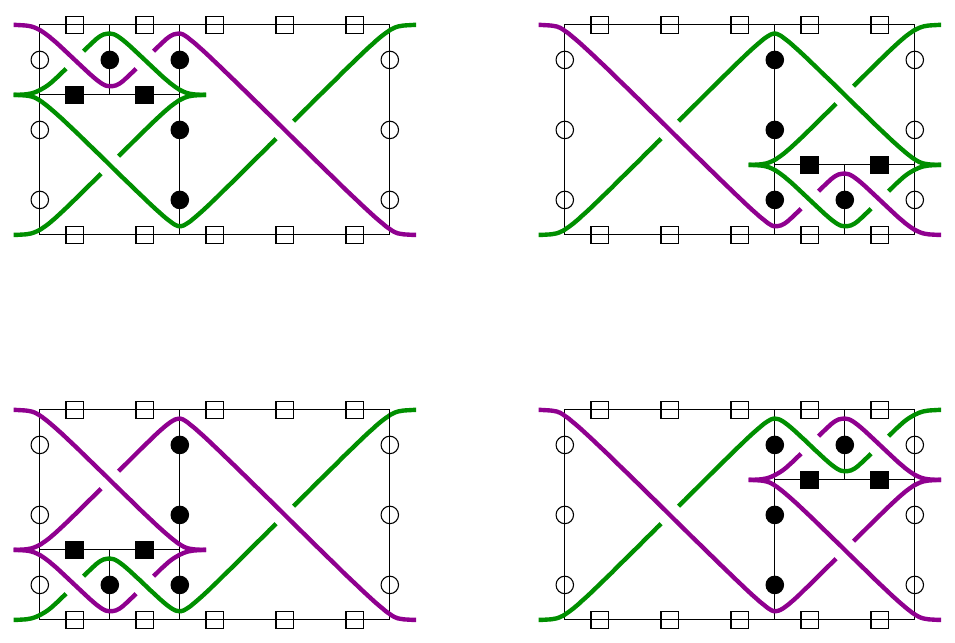}
\caption[Correspondence between $f$-moves on $\boxdot_{5/3}$ and Legendrian flypes of $\G_{5/3}$.]{Counterclockwise from top left: box-dot diagrams (with subdivisions) for $\G_{(2,1,1)}$, $\G_{(2,1^1,1)}$, $\G_{(2,1^1,1^1)}$ and $\G_{(2,1,1^1)}$.}
\label{egbddflypes}
\end{center}
\end{figure}

\subsection{The Ambient Ball $\boldsymbol{B_q}$}
\label{sec:ambientball}
Recall from Section~\ref{sec:tangleconstruction} that rational tangles are properly embedded in a 3-dimensional ball, and that tangle isotopies are isotopies of this ambient ball which restrict to the identity along its boundary.  We show now that we can construct a particular ambient ball for a regular Legendrian rational tangle from its box-dot diagram.

For a positive rational number $q=P/Q$, let $\plB_q$ denote the rectangular box in $\rr^3$ given by $\plB_q=[0,P]\times[-1,1]\times[0,Q]$, so that the rectangle $\R_q$ from Section~\ref{sec:boxdotdiagrams} is then the projection of $\plB_q$ into the $xz$-plane.  By smoothing $\plB_q$ along a small neighborhood of its edges we obtain a smooth object $B_q$, topologically a 3-ball, that agrees with $\plB_q$ on all but this neighborhood.  We will denote by $p_1$, $p_2$, $p_3$, and $p_4$ the images on $\partial B_q$ of the points $(0,-1,Q)$, $(P,1,Q)$, $(0,1,0)$, and $(P,-1,0)$ on $\partial \plB_q$ after this smoothing process.  These points will serve as the endpoints of the strands of $\G_q$.

As discussed in Section~\ref{sec:characteristicfoliations}, the characteristic foliation of a surface in $(\rr^3,\xi_{std})$ is obtained from the intersection of its tangent planes with the contact planes at every point.  The characteristic foliation of $\partial \plB_q$ can be assembled from the characteristic foliations of the individual faces, which are easily seen to be as illustrated in Figure~\ref{folball} using the basis of $\xi$ given in Section~\ref{sec:contactgeometry}.  In smoothing $\plB_q$ to $B_q$ we then obtain a smooth, singular foliation on $\partial B_q$ which is very similar to the characteristic foliation of the unit sphere in $(\rr^3,\xi_{std})$.  In fact, the only fundamental difference between these foliations is that the elliptic points at the poles of the sphere are replaced with what are known as \emph{generalized elliptics}---curves of singularities---in $(\partial B_q)_{\xi}$ where $\partial B_q$ meets the line $x=0$ in each of the planes $z=0$ and $z=Q$.  

\begin{figure}[ht]
\begin{center}
\includegraphics[width=0.8\textwidth]{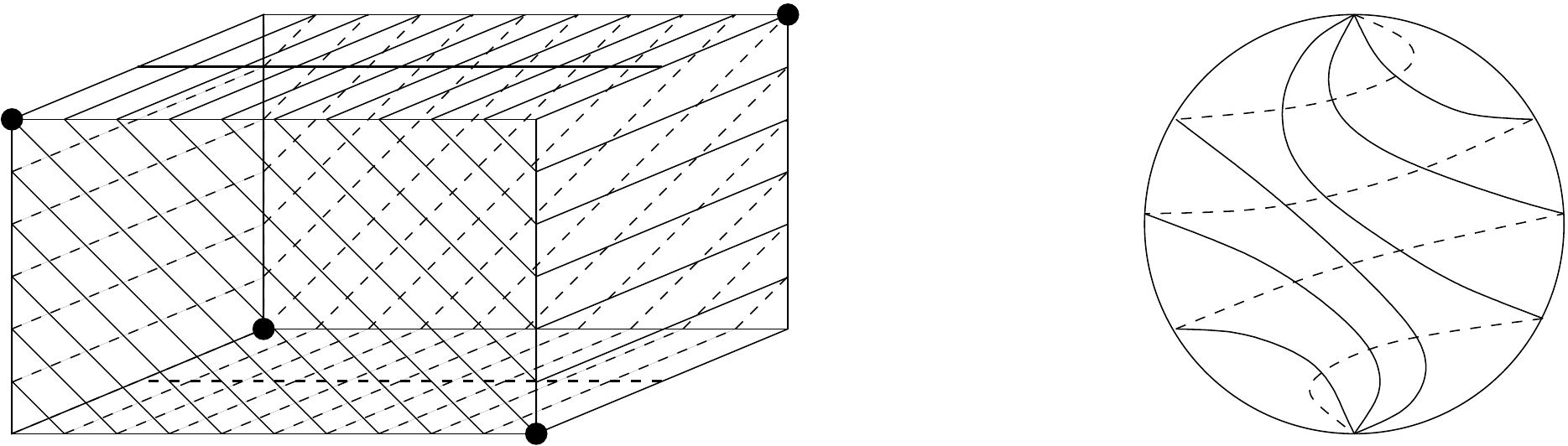}
\caption[The ambient ball $B_q$.]{The characteristic foliation and marked points on $\plB_q$ (left) and the characteristic foliation of the unit sphere in $(\rr^3,\xi_{std})$, as seen in~\cite{E03} (right).}
\label{folball}
\end{center}
\end{figure}

Up to some adjustment accounting for the smoothing process, the restriction that Legendrian tangle isotopies may not leave the ambient ball $B_q$ reduces to requiring that transverse isotopies of the front projection of $\G_q$ neither stray outside of the boundary of $\R_q$, nor ever produce arcs with a slope of magnitude greater than or equal to one.  The former of these conditions keeps isotopies contained in $B_q$ in the $x$ and $z$ directions in the obvious way.  The latter prevents the corresponding Legendrian curves from leaving $B_q$ in the $y$-direction.

While this embedding of $B_q$ and the tangles it contains may seem awfully specific, we remark that this primarily serves to ease the exposition, as we may rescale $B_q$ in the $x$ and $z$ directions freely.  If the $x$ and $z$ dimensions are scaled by factors $a$ and $b$, respectively, then the tangle isotopy conditions described above remain valid, provided we also scale the $y$ dimension by a factor of $b/a$.  We may also translate the ball $B_q$ (and all its contents) freely in the $x$ and $z$ directions, as the contact structure is preserved under such translations.  We remark here that we only employed the bounding rectangle $\R_q$ of the box-dot diagram in this construction, so any diagram $\boxdot_{q^f}$ will yield the same ambient ball $B_q$.

\subsection{The Legendrian Unknot $\boldsymbol{K_q}$}
\label{sec:unknotKq}
Using the boundary points of the box-dot template for $\boxdot_q$, we now construct a Legendrian unknot in the characteristic foliation of $B_q$ which will serve as the boundary of a compressing disc for $B_q-\G_q$ in Section~\ref{sec:compressingdisc}.  By construction, there are a total of $2P$ boxes and $2Q$ dots in the boundary of the box-dot template.  The set of $P+Q$ segments of slope $-1$ in $\rr^2$ whose endpoints are incident with these boundary points together with the $P+Q$ segments of slope $1$ with the same endpoints yield a piecewise linear planar projection of an unknot, as Proposition~\ref{unknotKq} will establish.   Again we will adhere to the convention that whenever two of these segments intersect, we form a crossing such that the segment of negative slope becomes the overarc.  Applying the Legendrian lifting map to these individual segments produces a disjoint collection of segments in the faces $y=\pm 1$ of $\plB_q$.  Corresponding to each of the boundary points we include the segment with the same $x$- and $z$-coordinates, and with $y$-coordinate ranging between $-1$ and $1$, each connecting a lifted segment in the plane $y=1$ to a lifted segment in the plane $y=-1$.  All of these segments lie in the characteristic foliation of $\plB_q$.  We will denote the union of these segments as $\plK_q$.  

\begin{unknotKq}
\label{unknotKq} 
Topologically, $\plK_q$ is an unknot. 
\end{unknotKq}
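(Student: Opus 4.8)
The plan is to realise $\plK_q$ as an embedded circle lying on the topological $2$-sphere $\partial\plB_q$; since any simple closed curve on a PL (or smoothly) embedded $2$-sphere in $\rr^3$ bounds a disc in that sphere by the Jordan--Schoenflies theorem, this immediately exhibits $\plK_q$ as an unknot. Two things must be checked: that $\plK_q$ is embedded, and that it is connected.

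Embeddedness is routine. Under the Legendrian lifting map each slope-$(-1)$ segment lifts into the face $y=-1$ of $\plB_q$ and each slope-$(+1)$ segment into the face $y=+1$, and within these faces the two families are disjoint collections of parallel chords of $\R_q$. The $2P+2Q$ vertical connectors lie on the side faces $\{x=0\}$, $\{x=P\}$, $\{z=0\}$, $\{z=Q\}$ at distinct places, each joining an endpoint of a slope-$(-1)$ chord to an endpoint of a slope-$(+1)$ chord over a common boundary point of the template. The only incidences among all these pieces are the intended shared endpoints at the $2P+2Q$ boundary points; the apparent double points of the $xz$-projection are genuinely resolved, since the two slopes sit in the disjoint planes $y=\pm 1$. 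Hence $\plK_q$ is a disjoint union of embedded polygonal circles in $\partial\plB_q$, combinatorially a $2$-regular graph whose vertices are the $2P+2Q$ boundary points and whose edges are the $2(P+Q)$ diagonal chords, with each vertex meeting exactly one chord of each slope.

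The substantive step is to show this graph is a single cycle, and it is here that the hypothesis $\gcd(P,Q)=1$ is used. I would index the boundary points cyclically by $m\in\zz/(2P+2Q)\zz$ according to arclength along $\partial\R_q$, and then compute, from the intercepts $x+z=c$ and $x-z=c'$ of the chords, that the slope-$(-1)$ chords realise the involution $\sigma_-\colon m\mapsto 1-m$ and the slope-$(+1)$ chords the involution $\sigma_+\colon m\mapsto 2P+1-m$ on $\zz/(2P+2Q)\zz$. Traversing $\plK_q$ then amounts to applying $\sigma_-$ and $\sigma_+$ alternately, so the cycle containing $m$ is the orbit of $m$ under $\langle\sigma_-,\sigma_+\rangle$; since $\sigma_+\circ\sigma_-$ is translation by $2P$ and $\gcd(2P,2P+2Q)=2\gcd(P,Q)=2$, this translation has order $P+Q$. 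Its orbit of $m$ is therefore the entire residue class of $m$ modulo $2$ inside $\zz/(2P+2Q)\zz$, and applying $\sigma_-$ (which lands in the opposite class, as $\sigma_-(m)\equiv m+1\pmod 2$) sweeps out the remaining $P+Q$ points. Thus the cycle through $m$ meets all $2P+2Q$ boundary points, so $\plK_q$ is a single circle. (When $\gcd(P,Q)=d>1$ the same bookkeeping splits $\plK_q$ into $d$ parallel components, which is exactly why $q$ is taken in lowest terms.)

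With both parts in hand, $\plK_q$ is a simple closed curve on $\partial\plB_q\cong S^2\subset\rr^3$, hence the boundary of a disc in $\partial\plB_q$, hence an unknot. The main obstacle is the connectedness count in the previous paragraph; the embeddedness verification and the appeal to Schoenflies are routine. I would also mention a second, more geometric route to connectedness: the $xz$-projection of $\plK_q$ is precisely a $45^{\circ}$ billiard trajectory in $\R_q$ running through all the edge-midpoints of $\partial\R_q$, which after unfolding reduces to tracking a slope-$1$ line on the torus $\rr^2/(2P\zz\times 2Q\zz)$; but the involution computation above is shorter and entirely self-contained.
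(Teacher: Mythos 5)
Your proposal is correct, but it takes a genuinely different route from the paper's. The paper works entirely in the projection: after attaching vertical segments at the $P+Q-2$ interior marked points, each square of the subdivision becomes a decorated square $S_m$, and Lemma~\ref{squareunknots} is invoked to unravel the diagram square by square, following the continued-fraction subdivision of $\R_q$ until the projection collapses to a point; connectedness of $\plK_q$ is only implicit in that unraveling. You instead note that all of $\plK_q$ lies on the $2$-sphere $\partial\plB_q$ (slope $\pm 1$ lifts in the faces $y=\pm 1$, connectors in the side faces), so unknottedness is immediate from Jordan--Schoenflies once $\plK_q$ is known to be a single embedded circle, and you reduce that to a combinatorial count: the two chord families act on the $2P+2Q$ boundary points as involutions, their composition is translation by $2P$ in $\zz/(2P+2Q)\zz$, and together with the parity flip of $\sigma_-$ this is transitive precisely when $\gcd(P,Q)=1$. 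I checked your involution formulas against the natural arclength labelling (starting along the bottom edge) and they are correct, as is the embeddedness discussion, so the argument goes through. The trade-off: your proof isolates exactly where coprimality of $P$ and $Q$ is used and separates connectedness from unknottedness cleanly, and it does not depend on the crossing convention at all; the paper's proof buys compatibility with the machinery that follows, since the same square-by-square loops and shrinking isotopies are reused in Section~\ref{sec:compressingdisc} to build the homotopy discs that assemble the compressing disc $D_q$ and its foliation, whereas the Schoenflies disc on $\partial\plB_q$ plays no further role. Both establish the proposition; yours is shorter and self-contained, the paper's is constructive in a way the later sections exploit.
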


The proof will be carried out via the projection of $\plK_q$ into the $xz$-plane, with the assistance of the following lemma.  In order to state this lemma simply, let $m$ be a positive integer, and let $S_m$ denote the square $[0,m]\times[0,m]$ in $\rr^2$ with $4m$ marked points along its boundary at coordinates $(0,j-1/2)$, $(j-1/2,0)$, $(m,j-1/2)$, and $(j-1/2,m)$, for $j\in\{1,2,\ldots,m\}$.  As in the above construction, we adjoin these marked points with a collection of $4m$ segments of slope $\pm 1$ such that the segments of slope $-1$ become the overarc in any crossings that occur.  Let $U_m$ denote the union of these segments.  Figure~\ref{squareunlink} illustrates this construction for $m=5$.

\begin{figure}[ht]
\begin{center}
\includegraphics[width=0.3\textwidth]{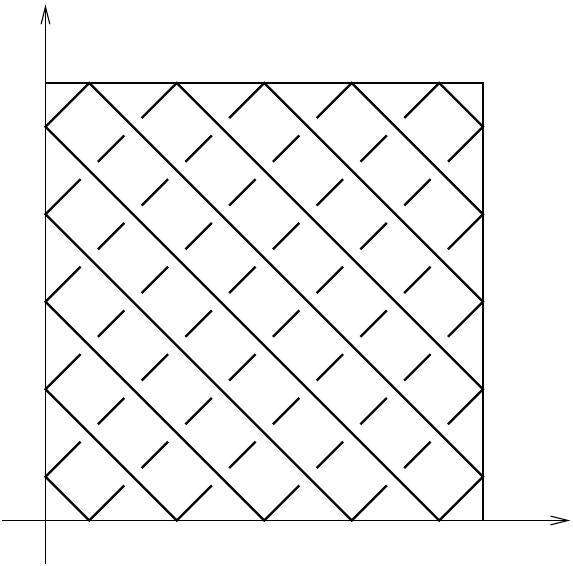}
\caption[The Unlink $U_5$.]{The unlink $U_5$, consisting of five unknots.}
\label{squareunlink}
\end{center}
\end{figure}

\begin{squareunknots}
\label{squareunknots}
$U_m$ is a planar projection of a piecewise linear Legendrian unlink, consisting of $m$ unknots.
\end{squareunknots}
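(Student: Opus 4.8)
The plan is to list the components of $U_m$ explicitly, check that each is unknotted, and then show that the components are ``stacked'' so that the whole diagram represents an unlink. First, each of the $4m$ marked points on $\partial S_m$ is an endpoint of exactly two segments of $U_m$, one of slope $+1$ and one of slope $-1$ (the two chords of those slopes running from the point into the interior until they strike $\partial S_m$ again), so $U_m$ is a disjoint union of piecewise linear circles, and every crossing of $U_m$ is a transverse double point between segments of opposite slope with the $(-1)$-sloped strand on top, as prescribed. Tracing the circles, one finds that the slope-$(+1)$ segments join the $k$th point on the left edge to the $(m-k+1)$th on the top and the $k$th on the bottom to the $(m-k+1)$th on the right, while the slope-$(-1)$ segments join the $k$th on the left to the $k$th on the bottom and the $k$th on the right to the $k$th on the top. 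Alternating these joins from the $j$th left-edge point closes up after four segments, so $U_m$ has exactly $m$ components $C_1,\dots,C_m$, where $C_j$ is the convex quadrilateral inscribed in $S_m$ with one vertex on each edge, at $(0,j-\tfrac12)$, $(j-\tfrac12,0)$, $(m,m-j+\tfrac12)$, and $(m-j+\tfrac12,m)$; its four sides lie on the lines $x+z=j-\tfrac12$, $x-z=j-\tfrac12$, $x+z=2m-j+\tfrac12$, and $z-x=j-\tfrac12$. Being a genuine convex quadrilateral, $C_j$ has no self-crossings, so it is an embedded plane curve and hence unknotted however it is lifted; its leftmost and rightmost vertices are precisely the two points where the $x$-coordinate reverses, so placing a horizontal cusp at each and smoothing the remaining two vertices (where $x$ is monotone and the slope jumps from $\pm1$ to $\mp1$) exhibits $C_j$ as the front of a piecewise linear Legendrian unknot (it has two cusps and no self-crossings, so $tb(C_j)=-1$).

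It remains to show the link is split, and for this I claim that whenever $i<j$, every crossing between $C_i$ and $C_j$ has the strand of $C_j$ on top. The slope-$(+1)$ sides of $C_i$ lie on $x-z=\pm(i-\tfrac12)$ and the slope-$(-1)$ sides of $C_j$ on $x+z=j-\tfrac12$ and $x+z=2m-j+\tfrac12$, and conversely; solving the resulting eight linear systems shows that the four intersections of a $(+1)$-side of $C_i$ with a $(-1)$-side of $C_j$ all lie inside $S_m$ on the genuine sub-segments that are sides of the quadrilaterals, whereas every intersection of a $(-1)$-side of $C_i$ with a $(+1)$-side of $C_j$ has a coordinate outside $[0,m]$ and so never occurs in $U_m$. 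Hence $C_i$ and $C_j$ meet in exactly four crossings, and at each the overstrand, being $(-1)$-sloped, belongs to $C_j$. This is the one computation I would actually carry out; the symmetries $(x,z)\mapsto(m-x,m-z)$ and $(x,z)\mapsto(z,x)$, each of which preserves every $C_k$, reduce the eight nominal cases to essentially one.

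With the claim in hand the conclusion follows: order the components by index and realize the lift so that $C_j$ sits in a thin slab at level $j$. The over/under data of $U_m$ is consistent with this layering (the higher-indexed strand is always the higher one, and there are no self-crossings to get in the way), the slabs are disjoint, and each $C_j$ projects to an embedded plane curve within its slab; equivalently, one peels $C_m$ off the top of the diagram --- it crosses over everything it meets --- then $C_{m-1}$, and so on. Either way $U_m$ is the front projection of the $m$-component unlink. The main obstacle is precisely the bookkeeping in the middle step: not the existence of the intersection points, but verifying that the ``right'' four of them land on the correct sub-segments of the sides while the remaining candidates fall outside $S_m$; exploiting the symmetry of the configuration is what keeps this from ballooning into a tedious case check.
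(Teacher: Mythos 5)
Your proposal is correct and follows essentially the same route as the paper: each component is traced as the embedded quadrilateral through $(0,j-\tfrac12)$, $(j-\tfrac12,0)$, $(m,m-j+\tfrac12)$, $(m-j+\tfrac12,m)$, and the unlink structure is obtained by peeling components off from the top one at a time. Your explicit verification that for $i<j$ every $C_i$--$C_j$ crossing has the $(-1)$-sloped strand of $C_j$ on top simply makes precise the layering fact the paper asserts directly from the crossing convention.
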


\begin{proof}
The corresponding Legendrian link is the union of the lifts of the given segments, together with the suspensions of the marked points as above.  In the planar projection $U_m$, each marked point $(0,j-1/2)$ is connected by a segment of slope $-1$ to the marked point $(j-1/2,0)$.  This point is connected by a segment of slope $1$ to the marked point $(m,m-j+1/2)$, which is then connected by a segment of slope $-1$ to the marked point $(m-j+1/2,m)$, which then connects to the initial point by a segment of slope $1$.  Since none of these segments cross over any of the others in this loop, it must correspond to an unknot.  Thus, $U_m$ is a projection of $m$ unknots, one corresponding to each initial point $(0,j-1/2)$.  By our crossing convention, the unknot corresponding to $(0,m-1/2)$ lies on top of all of the other unknots in the planar projection of the link $U_m$, and so can be shrunk to a point in the complement of the other components in $\rr^3$.  The unknot corresponding to $(0,m-3/2)$ now lies on top of the remaining $m-2$ components, and so again we can shrink this unknot to a point in the complement of the other components.  Proceeding inductively, it is thus possible to shrink each of the $m$ unknots in $U_m$ to a point in the complement of the others, proving that $U_m$ is a projection of an unlink. 
\end{proof}

\begin{proof}[Proof of Proposition~\ref{unknotKq}]  
We begin with the diagram $\boxdot_q$, and reconstruct the boundary points and associated subdivision of $\R_q$.  By construction, each of the boxes or dots in $\boxdot_q$ will occur at one of $P+Q-2$ crossings of the projection of $\plK_q$.  For each such marked point $(x,z)$, we attach to $\plK_q$ the line segment between $(x,-1,z)$ and $(x,1,z)$, allowing us to view each square of the subdivision as a decorated square $S_m$ with $m=r_j$ for some $j$.  If $q>1$, we begin with the right-most of the $q_1$ squares of dimension $r_1$.  Lemma~\ref{squareunknots} then implies that this square contains $r_1$ unknots, each of which can be shrunk to a point in the complement of the others.  By performing these isotopies, we deform the projection of $\plK_q$ through this first square.  Repeating this process inductively through each of the $q_1$ squares of dimension $r_1$ similarly deforms the projection to the remainder rectangle $\r_2$.  

Continuing on, or beginning here if $q<1$, we then repeat the above process, from the bottom-most square of dimension $r_2$ through each of the $q_2$ such squares in $\r_2$, deforming the projection of $\plK_q$ to the remainder rectangle $\r_3$.  Proceeding inductively through rows and columns of squares, we deform the projection of $\plK_q$ to a point, thus showing that it is a projection of an unknot.  This inductive deformation is illustrated in Figure~\ref{egunknotpf}.
\end{proof}

\begin{figure}[ht]
\begin{center}
\includegraphics[width=0.8\textwidth]{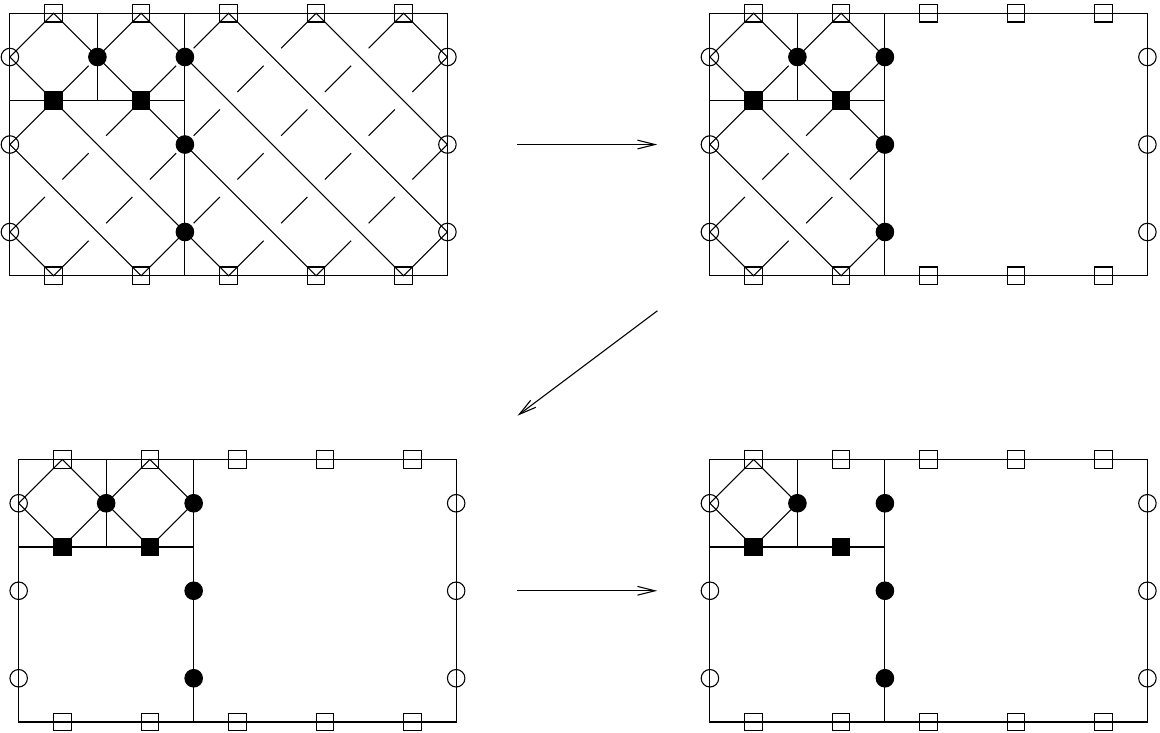}
\caption[Unraveling $\plK_q$.]{Unraveling $\plK_{5/3}$.}
\label{egunknotpf}
\end{center}
\end{figure}

With $\plK_q$ embedded in the characteristic foliation of $\plB_q$, we then apply the smoothing process used to construct $B_q$.  In so doing, we produce a Legendrian unknot $K_q$ in $(B_q)_{\xi}$.  The front projection of $K_q$ can be obtained from the planar projection of $\plK_q$ constructed initially by smoothing any of the corners where two segments of slope $\pm 1$ meet, and by replacing any such smoothings that occur at boundary dots of the corresponding diagram with cusps, as in Figure~\ref{egunknot}.  To ensure that $K_q$ lies on $B_q$, we require that the smoothed corners still pass through the centers of boundary boxes in the template, and that the cusp points occur precisely at the centers of boundary dots\footnote{We generally fail to adhere to this requirement in our figures, though only to make the cusp points more visible.  Conceptually, however, these figures should be viewed as though they satisfy this condition.}.  We remark here that as the unknot $K_q$ is constructed from only the box-dot template, any diagram $\boxdot_{q^f}$ will produce the same unknot $K_q$ under this construction.

\begin{figure}[ht]
\begin{center}
\includegraphics[width=.35\textwidth]{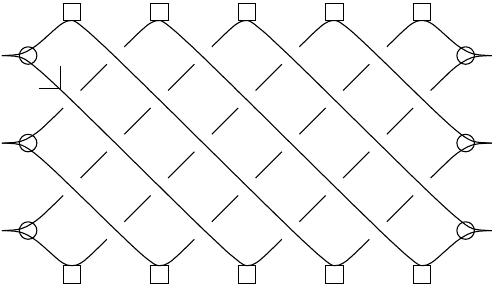} 
\caption[The Legendrian unknot $K_{5/3}$.]{Front projection of the Legendrian unknot $K_{5/3}$.}
\label{egunknot}
\end{center} 
\end{figure}

Proposition~\ref{Kqtbandr} below provides the classical invariants $tb(K_q)$ and $r(K_q)$ (cf.\ Section~\ref{sec:classicalinvariants}) of the Legendrian unknot $K_q$.  In computing these invariants, and throughout the rest of this work, we assume an orientation on $K_q$ induced by traversing this unknot from the top-left dot in the boundary of the corresponding box-dot template along the segment of negative slope leaving it, as in Figure~\ref{egunknot}.  The opposite orientation reverses the sign of the rotation number $r(K_q)$ and preserves the Thurston-Bennequin number $tb(K_q)$.  The proof of this proposition will be deferred until after describing an additional modification to the box-dot template in Section~\ref{sec:compressingdisc}.

\begin{Kqtbandr}
\label{Kqtbandr}
For $q=P/Q$, the Legendrian unknot $K_q$ obtained from $\boxdot_q$ has classical invariants given by:
\begin{align*}
tb(K_q)&=-P\\
r(K_q)&=\left\{\begin{array}{l l} 0 & \textrm{if } P \textrm{ is odd}\\ 1 & \textrm{if } P \textrm{ is even}\\\end{array}\right.
\end{align*}
\end{Kqtbandr}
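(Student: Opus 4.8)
The plan is to compute $tb(K_q)$ and $r(K_q)$ directly from the front projection of $K_q$ constructed above, using Formulas~\eqref{tbformula} and~\eqref{rformula}. This reduces the proposition to three bookkeeping problems: counting the cusps, computing the writhe, and splitting the cusps into up- and down-cusps for the prescribed orientation of $K_q$.

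The cusp count is immediate. A corner of the piecewise-linear diagram of $\plK_q$ occurs at each of the $2P+2Q$ boundary points of the box-dot template; a corner at a boundary box becomes a smooth local maximum or minimum of the front, introducing no vertical tangency, while a corner at a boundary dot must be replaced by a cusp precisely to avoid a vertical tangency. Since the boundary dots are exactly the $Q$ template dots on each of the two vertical edges of $\R_q$, the front of $K_q$ has $D+U=2Q$ cusps, so \eqref{tbformula} already gives $tb(K_q)=wr(K_q)-Q$. Hence the assertion $tb(K_q)=-P$ is equivalent to $wr(K_q)=Q-P$.

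For the writhe and the cusp split I would argue by induction on $P+Q$, reducing the pair $(P,Q)$ to $(P-Q,Q)$ when $P>Q$ and to $(P,Q-P)$ when $P<Q$ (each reduction preserves $\gcd(P,Q)=1$, so the smaller pair again names a box-dot template), with base case $P=Q=1$, where the front of $K_q$ is the standard maximal-$tb$ unknot with $wr=0$ and one up- and one down-cusp. A reduction step removes from the diagram of $\plK_q$ exactly the portion lying in a single square of the standard subdivision of $\R_q$---this is the deformation carried out in the proof of Proposition~\ref{unknotKq}---and I would track how that removal changes the writhe and the up/down cusp counts purely as combinatorial quantities of the front diagram, rather than via a Legendrian isotopy (which this deformation is not); since $wr$, $D$, and $U$ are read off any front by \eqref{tbformula}--\eqref{rformula}, comparing the two diagrams directly suffices. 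Carrying this through should give $wr(K_q)=Q-P$, hence $tb(K_q)=-P$, together with $D-U=0$ when $P$ is odd and $D-U=2$ when $P$ is even; \eqref{rformula} then yields the stated value of $r(K_q)$, and reversing the chosen orientation of $K_q$ exchanges $D$ and $U$, flipping the sign of $r$ while leaving $wr$ and $D+U$ fixed.

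The hard part is the crossing-sign bookkeeping in the inductive step: all crossings of $K_q$ have the negative-slope arc on top, but the sign of a crossing also depends on whether the two arcs are locally co-oriented in the $x$-direction, which is a global feature of how the single closed curve $K_q$ threads the template; controlling it requires a carefully chosen inductive hypothesis, most naturally one that also records the directions in which $K_q$ enters and leaves each remainder rectangle. An attractive alternative---and, given that the proof is deferred until after the compressing-disc construction of Section~\ref{sec:compressingdisc}, probably the intended one---is to take the compressing disc bounded by $K_q$ to be convex and read $tb(K_q)$ off (half) the number of intersections of $K_q$ with its dividing set, and $r(K_q)$ off the difference of the Euler characteristics of the positive and negative regions of the disc, using the explicit description of the characteristic foliation obtained there.
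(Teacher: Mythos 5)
Your reduction of $tb$ is fine as far as it goes: the cusps of $K_q$ occur exactly at the $2Q$ boundary dots, so $tb(K_q)=wr(K_q)-Q$ and the claim becomes $wr(K_q)=Q-P$, which matches the paper. But from that point on the proposal does not actually prove anything: the values $wr(K_q)=Q-P$, and the split $D-U=0$ or $2$ according to the parity of $P$, are only asserted as what the induction ``should give,'' and the step you yourself flag as the hard part---determining crossing signs and cusp orientations---is left unresolved. That step is not a technicality; it is the entire content of the proposition beyond the cusp count. Worse, the proposed Euclidean-algorithm induction does not circumvent it: when you excise the rightmost $Q\times Q$ square you must still decide the signs of the $Q(Q-1)$ crossings at boxes, the $(Q-1)Q$ crossings at interior dots, and the $Q$ crossings along the cut line inside that square, plus the orientations of the $Q$ new cusps, so the same sign bookkeeping reappears locally in every inductive step. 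Without a proved rule for these signs the argument is circular in exactly the place where it needs content.

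The paper resolves this with a direct global count that makes induction unnecessary: the crossings of $K_q$ are in bijection with the \emph{interior} points of the box-dot template, and (Figure~\ref{Kqtbandrproof}) every crossing at an interior box is right-handed while every crossing at an interior dot is left-handed---a fact which, since $K_q$ is a single component, is independent of the chosen orientation and can be checked locally once the traversal directions through boxes and dots are understood. Hence $wr(K_q)=P(Q-1)-Q(P-1)=Q-P$ in one line. For the rotation number, the deferral of the proof to after Section~\ref{sec:compressingdisc} is not an invitation to convex-surface theory (the paper never invokes convexity or dividing sets); it is only so that the checkerboard signing of the template is available: positive boundary dots give downward cusps and negative boundary dots give upward cusps, and summing the alternating signs along the two vertical edges gives $D$ and $U$ by a parity count, yielding $r(K_q)=0$ for $P$ odd and $1$ for $P$ even. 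If you want to salvage your inductive scheme, the missing inductive hypothesis would have to encode precisely this box/dot sign rule and the cusp-orientation rule---at which point the direct count is both shorter and more transparent.
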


\subsection{The Compressing Disc $\boldsymbol{D_q}$}
\label{sec:compressingdisc}
We next use the box-dot diagram $\boxdot_q$ to construct a compressing disc $D_q$ for $B_q-\G_q$ (cf.\ Section~\ref{sec:compressingdiscs}).  Specifically, $D_q$ will be a smoothly embedded disc whose boundary is the Legendrian unknot $K_q$ constructed above, and whose interior is contained in the interior of the ball $B_q$.  The complement $B_q-D_q$ will consist of two components, each containing one of the strands of $\G_q$.  We remark again here that while the following construction is described relative to the diagram $\boxdot_q$, it can indeed be performed relative to any box-dot diagram $\boxdot_{q^f}$ to produce a compressing disc for $B_q-\G_{q^f}$.

To begin, we construct from $\boxdot_{q}$ the unknot $K_q$.  As in the proof of Proposition~\ref{unknotKq}, we attach line segments parallel to the $y$-axis at each of the marked points in $\boxdot_{q}$, dividing $K_q$ into a collection of piecewise linear loops.  The isotopy of $K_q$ across a loop then provides a homotopy disc that can be used to fill this loop.  Iterating this process across all of the loops sequentially, smoothing these discs where they meet along the added segments as necessary, we obtain a compressing disc for the ball $B_q$.  However, this disc may not satisfy the additional property that it separates the strands of $\G_q$.  In order to establish this condition, we will appeal to the characteristic foliation of $D_q$.

The foliation of $D_q$ is obtained by gluing together discs with well-known foliations (see~\cite{EF08}).  We view each of the $P+Q-1$ homotopy discs, with piecewise Legendrian boundary made up of arcs of $K_q$ and segments attached at boxes and dots, as bearing the \emph{trivial foliation} shown in Figure~\ref{bddfoliation}.  Where two of these homotopy discs meet along a segment either horizontally (at a dot) or vertically (at a box) we position them so as to bear the \emph{gluing foliations} shown in this figure.  

\begin{figure}[ht]
\begin{center}
\includegraphics[width=0.45\textwidth]{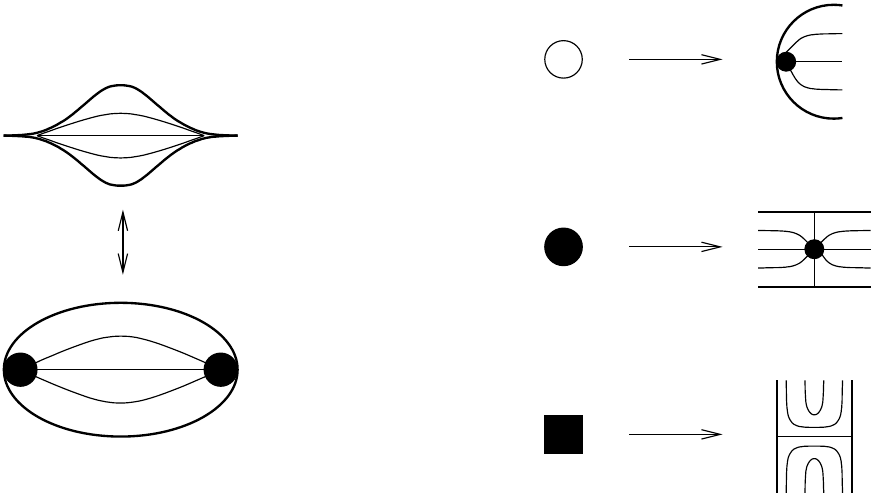}
\caption[Constructing the foliation of $D_q$.]{At left: a trivially foliated disc.  At right: gluing foliations corresponding to boundary dots (top), interior dots (center), and interior boxes (bottom) in a box-dot diagram.}
\label{bddfoliation}
\end{center}
\end{figure}

By construction, the slope of any subarc of the front projection of either strand of $\G_q$ at any point lies in the interval $(-1,1)$.  Moreover, given $\epsilon > 0$ we can arrange that for some $\delta > 0$, the slope of a subarc at any point lying outside of a $\delta$-neighborhood of any cusp or local extreme lies in one of the intervals $(-1,-1+\epsilon)$ or $(1-\epsilon, 1)$.  By positioning the disc accordingly, we then require that any leaf in the foliation of $D_q$ whose front projection intersects that of $\G_q$ has slope in $(-1+\epsilon, 1-\epsilon)$ at the point of intersection.  This guarantees that $D_q$ will separate the strands of the tangle after applying the Legendrian lifting map to each of the leaves in its characteristic foliation, as this will place the corresponding portion of $D_q$ closer to the $xz$-plane than either strand of $\G_q$.  Similar considerations may be applied to the individual foliations of each of the finitely many portions of the disc corresponding to a square in the subdivision.  This is to ensure that $D_q$ contains no point of self-intersection as would appear in the front projection of the foliation as a tangential intersection of leaves.  Figure~\ref{egbddisc} illustrates, via such a projection, an example of a foliated compressing disc constructed in this manner.

\begin{figure}[ht]
\begin{center}
\includegraphics[width=0.35\textwidth]{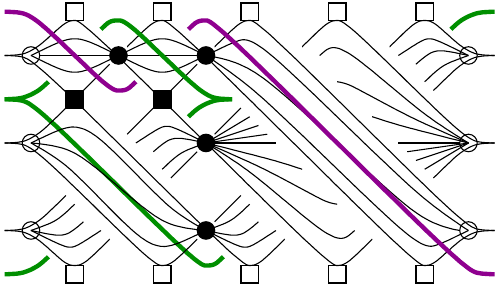}
\caption[The compressing disc $D_{5/3}$.]{The compressing disc $D_{5/3}$ coming from $\boxdot_{5/3}$, together with the corresponding tangle $\G_{5/3}$.}
\label{egbddisc}
\end{center}
\end{figure}

To complete the construction of the foliated disc $D_q$, recall from Section~\ref{sec:characteristicfoliations} that if $D_q$ is oriented then the singularities in $(D_q)_{\xi}$ are either positive or negative, depending on whether the co-orientations of the tangent planes to $D_q$ agree or disagree with those of the contact planes, respectively.  By refining the box-dot template, we can also encode in the corresponding box-dot diagram the signs of singularities of the foliation of $D_q$ relative to a particular choice of orientation on the bounding unknot $K_q$.  Assuming $K_q$ is oriented as at the end of Section~\ref{sec:unknotKq}, shown again in Figure~\ref{egsigntemplate}, we endow $D_q$ with the induced right-handed orientation.  Relative to this choice of orientation we assign a \emph{checkerboard signing} on all of the dots in the box-dot template.  This in turn will induce a particular signing of the dots in any box-dot diagram constructed on this template.  Under this assignment, the sign of any dot will then correspond to the sign of each of the elliptic and boundary hyperbolic singularities of the corresponding gluing foliation.  For the gluing foliations corresponding to each of the boxes in the diagram, we must determine the sign of a pair of boundary hyperbolic points on $K_q$.  The signs of the dots in the diagram induce a similar checkerboard signing of the interior boxes by requiring that a box bear the same sign as the dots adjacent to it along an arc of negative slope in $K_q$.  These signs will correspond to the sign of the hyperbolic point lying along the overstrand at the corresponding crossing of $K_q$, with the hyperbolic point lying along the understrand necessarily bearing the opposite sign, as in Figure~\ref{hyperbolicsigns}.

\begin{figure}[ht]
\begin{center}
\includegraphics[width=0.8\textwidth]{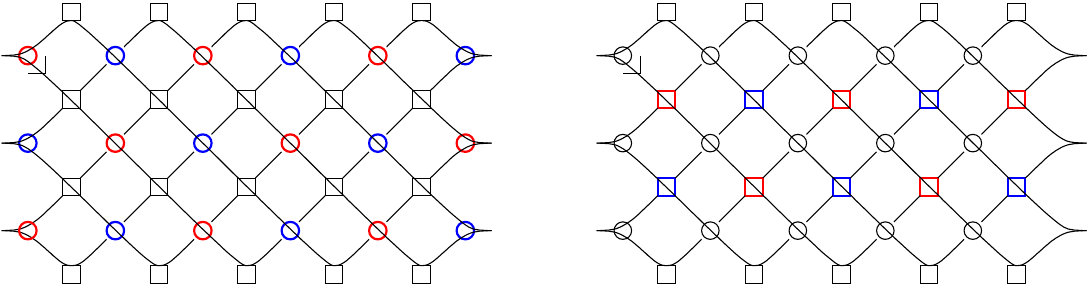}
\caption[Sign conventions for the box-dot template.]{The checkerboard signings on the dots and interior boxes of $\boxdot_q$ induced by the specified orientation.  Markings are colored red or blue to indicate that they are positive or negative, respectively.}
\label{egsigntemplate}
\end{center}
\end{figure}

\begin{figure}[ht]
\begin{center}
\resizebox{.25\textwidth}{!}{\input{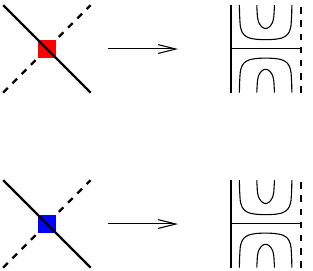_t}}  
\caption[Signs of hyperbolic singularities.]{Sign convention for boxes in the box-dot diagram.  In the foliations shown, the vertical line along the left corresponds to the overstrand of the corresponding crossing of $K_q$.}
\label{hyperbolicsigns}
\end{center}
\end{figure}

With a fixed choice of signs on the dots and interior boxes of a box-dot template, coming from our choice of orientation on $K_q$, we now return to the proof of Proposition~\ref{Kqtbandr}.

\begin{proof}[Proof of Proposition~\ref{Kqtbandr}]
We proceed by direct computation, appealing to Formulas~\eqref{tbformula} and~\eqref{rformula}.  As shown in Figure~\ref{Kqtbandrproof}, positive or negative dots on the boundary of the box-dot template correspond to downward- or upward-oriented cusps in the front projection of $K_q$, respectively.  Since the only cusps in $K_q$ occur at the $2Q$ boundary dots of the corresponding template, by summing over either side of the template we deduce that 
\begin{align*}
D &= \left\{\begin{array}{l l} 
Q/2+Q/2 = Q & \textrm{if } Q \textrm{ even, } P \textrm{ odd}\\ 
(Q+1)/2+(Q+1)/2 = Q+1 & \textrm{if } Q \textrm{ odd, } P \textrm{ even}\\
(Q+1)/2+(Q-1)/2 = Q & \textrm{if } Q \textrm{ odd, } P \textrm{ odd}\\\end{array}\right.\\
U &= \left\{\begin{array}{l l}
Q/2+Q/2 = Q & \textrm{if } Q \textrm{ even, } P \textrm{ odd}\\
(Q-1)/2+(Q-1)/2 = Q-1 & \textrm{if } Q \textrm{ odd, } P \textrm{ even}\\
(Q-1)/2+(Q+1)/2 = Q & \textrm{if } Q \textrm{ odd, } P \textrm{ odd}\\\end{array}\right.
\end{align*}

\begin{figure}[ht]
\begin{center}
\includegraphics[width=0.5\textwidth]{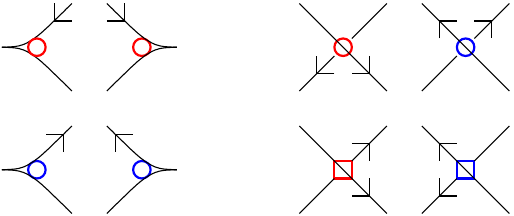}
\caption[Boxes and dots, crossings and cusps.]{At left: positive boundary dots give downward cusps, negative boundary dots give upward cusps.  At right: interior dots give negative crossings, interior boxes give positive crossings.}
\label{Kqtbandrproof}
\end{center}
\end{figure}

Further, the nature of crossings of $K_q$ implies that those crossings occurring at interior boxes in the box-dot template are positive (right-handed), while those at interior dots are negative (left-handed).  Thus the writhe $wr(K_q)$ of $K_q$ can be computed by subtracting the number of interior dots in the template from the number of interior boxes.  As there are a total of $P(Q-1)$ interior boxes and $Q(P-1)$ interior dots in the box-dot template, we have that
\begin{align*}
tb(K_q) &= wr(K_q) - \frac{1}{2}(D+U)\\
&= P(Q-1)-Q(P-1) - Q\\
&= -P,\\
r(K_q) &= \frac{1}{2}(D-U)\\
&= \left\{\begin{array}{l l} 0 & \textrm{if } P \textrm{ is odd,}\\ 1 & \textrm{if } P \textrm{ is even.}\end{array}\right.\qedhere
\end{align*}
\end{proof}

\section{Encoding Regular Tangles in Characteristic Foliations}
\label{sec:tanglesandfoliations}
We saw in Section~\ref{sec:boxdotconstructions} that a box-dot diagram $\boxdot_{q^f}$ contains enough information to recreate both a regular Legendrian rational tangle $\G_{q^f}$ and the associated compressing disc $D_{q^f}$ bearing a particular signed characteristic foliation.  In Section~\ref{sec:results}, our goal will be to use box-dot diagrams as a sort of combinatorial invariant for distinguishing Legendrian tangles.  The means by which we will accomplish this lie in Lemmas~\ref{cardinalitylemma} and~\ref{bijectionlemma}, found in Section~\ref{sec:lemmas}.  These lemmas establish necessary conditions on the corresponding foliations for the existence of an isotopy between two regular tangles which can be read directly from their box-dot diagrams.  The proofs of these lemmas are somewhat technical, and will require additional modifications of the corresponding compressing discs.  These modifications, described in Section~\ref{sec:bubble}, will produce a surface that still depends only on our earlier constructions, but whose characteristic foliation admits an embedding, constructed in Section~\ref{sec:push}, of the strands of $\G_{q^f}$.  The motivation underlying this section is the suspicion that if there exists a Legendrian isotopy between curves which are themselves leaves of the characteristic foliation of some surface, then there ought to be a deformation of the surface---realizable as an isotopy of its foliation---which realizes this Legendrian isotopy.  Lemmas~\ref{cardinalitylemma} and~\ref{bijectionlemma} represent a less restrictive observation than this, though one that suffices in proving the results of Section~\ref{sec:results}.

In sections~\ref{sec:push} and~\ref{sec:lemmas} we will define a collection of \emph{ambient contact isotopies}---isotopies of $(B_q,\xi)$ through \emph{contactomorphisms}, or diffeomorphisms which also preserve the contact structure.  We remark here that we can, and will, assume that any such ambient contact isotopy fixes the boundary of $B_q$; particularly the unknot $K_q$ and the tangle endpoints $p_1$, $p_2$, $p_3$, and $p_4$.

\subsection{Bubbling and Warping}
\label{sec:bubble}
We assume first that $\G_{q^f}$ is an odd-length tangle, i.e., that $q\sim(q_n,\ldots,q_1)$ with $n$ odd.  The subdivision process described in section~\ref{sec:boxdotdiagrams} can be viewed as adding a collection of line segments to the rectangle $\R_q$ so as to produce the squares  of the subdivision.  Thus the terminal stage of the construction of $\boxdot_{q^f}$ can be seen as subdividing the remainder rectangle $\r_n$ into a row of $q_n$ squares of unit area by adding $q_n-1$ unit-length vertical line segments spaced evenly along $\r_n$.  These segments in turn provide us with a set of $q_n-1$ dots in the box-dot diagram which we will refer to as \emph{shared dots}.  Similarly, all of the remaining stages of the construction of the subdivision can be described as adding a collection of $q_j$ segments---vertical if $j$ is odd, horizontal if $j$ is even---to the remainder rectangle $\r_j$, for $1\leq j\leq n-1$.  Respectively, from each row or column of boxes or dots that are associated to these segments, we isolate the left-most and right-most boxes or top-most and bottom-most dots, referring to these collectively with the shared dots as \emph{tagged boxes} or \emph{tagged dots}.  The tagged boxes of $\boxdot_{q^f}$ are in one-to-one correspondence with the cusps of the front projection of $\G_{q^f}$, while the tagged dots correspond\footnote{This correspondence is one-to-one except at shared elliptics.} to local extrema of its strands.  Finally, we isolate the top-most and bottom-most of the boundary dots from either side of the template, referring to these as \emph{endpoint dots}.  These various subsets of the box-dot diagram are illustrated in Figure~\ref{boxdottypes} to provide the intuition behind the terminology we employ.  

In constructing the compressing disc $D_{q^f}$ from $\boxdot_{q^f}$, we produced a correspondence between the elliptic points of the characteristic foliation and the dots---both interior dots and boundary dots---of $\boxdot_{q^f}$ (cf.\ Figure~\ref{bddfoliation}).  In what follows we will refer to subsets of the elliptic points of $D_{q^f}$ as \emph{shared elliptics, tagged elliptics, or endpoint elliptics}, associating to them the same descriptors as may have been endowed upon the dots that they arose from.

\begin{figure}[ht]
\begin{center}
\includegraphics[width=0.8\textwidth]{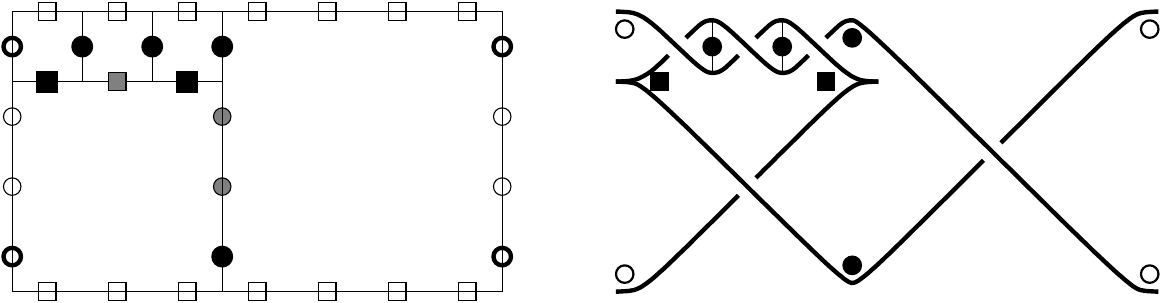}
\caption[Shared dots, tagged dots/boxes, and endpoint dots.]{The shared dots, tagged dots and boxes, and endpoint dots of $\boxdot_{7/4}$ are shown in bold at left.  At right we show only these points with $\G_{7/4}$ to justify our teminology---tagged points are nearest the front projection of the tangle, with shared dots equidistant from both strands, while endpoint dots are nearest the ends of the tangle.}
\label{boxdottypes}
\end{center}
\end{figure}

The first modification we perform will convert $D_{q^f}$ into a particular \emph{branched surface}\footnote{We avoid defining branched surfaces here, as we have no need for much of the technology associated to them.  While this term is used as a frame of reference for those familiar with such objects, the properties of the surface we employ should be clear from the exposition.}.  Specifically, for some small $\epsilon>0$, at each shared elliptic $e$ in the characteristic foliation of $D_{q^f}$ we replace the closed neighborhood $\{p \in D_{q^f}\big| |p-e| \leq \epsilon\}$ with the sphere $S_{\epsilon}(e)=\{p \in \rr^3 \big| |p-e| = \epsilon \}$.  Viewing this sphere as a pair of discs which meet along the \emph{equator} curve $C_{\epsilon}(e)=\{ p \in D_{q^f} \big| |p-e| = \epsilon \}$, we then smooth each of these discs in such a way that they meet $D_{q^f}$ with a well-defined tangent plane along this equator (see Figure~\ref{bubbling}).  This process produces what resembles a bubble in the disc $D_{q^f}$ around each shared elliptic $e$.  Thus, we will refer to this modification as \emph{bubbling} $D_{q^f}$, and will refer to the resulting object as a \emph{bubbled disc}.  We may assume that the characteristic foliation of the bubbled disc, where it does not already agree with that of $D_{q^f}$, is obtained from the standard foliation on the sphere $S_{\epsilon}(e)$ (cf.\ Figure~\ref{folball}) by ``pinching'' this sphere along $C_{\epsilon}(e)$.  In particular, each bubble contains only two elliptic singularities---which we shall refer to as \emph{poles}---at the endpoints of the vertical segment of length $2\epsilon$ centered at $e$.  There are natural isotopies which carry the disc $D_{q^f}$ to either branch while preserving the foliation, up to diffeomorphism, of the affected portion.  These isotopies induce a natural choice of co-orientation on the corresponding bubbled disc, associating signs to both poles of any bubble that agree with the sign of the corresponding shared elliptic.  We may also assume the equator $C_{\epsilon}(e)$ of a bubble will intersect the characteristic foliation of the bubbled disc near $e$ transversely, a fact we will require in the proof of Lemma~\ref{bijectionlemma}.

\begin{figure}[ht]
\begin{center}
\includegraphics[width=.8\textwidth]{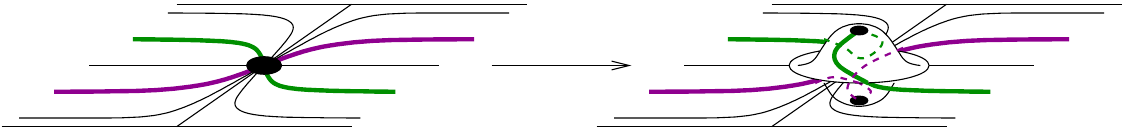}
\caption[Bubbling $D_{q^f}$ near a shared elliptic point.]{Bubbling $D_{q^f}$ near a shared elliptic point.  Note that bubbling allows leaves in the foliation which would have intersected at the shared elliptic to pass over one another without intersecting.}
\label{bubbling}
\end{center}
\end{figure}  

For our second modification, at each hyperbolic-hyperbolic connection in the foliation of $D_{q^f}$ corresponding to a tagged box, we push the midpoint of this connection slightly in either the positive or negative $x$-direction toward the nearest cusp of $\G_{q^f}$.  This operation can be realized as an ambient isotopy of $B_q$, and will result in a shearing of the highly unstable hyperbolic-hyperbolic connection (see Figure~\ref{hyperbolicshearing}).  This shearing creates an opening in the foliation of $D_{q^f}$ which allows leaves of the foliation to pass from one of the trivially-foliated subdiscs described in Section~\ref{sec:compressingdisc} to a vertically adjacent one.  We will refer to this modification as \emph{warping} $D_{q^f}$, and will refer to the resulting object as a \emph{warped disc}.  

We remark here that if this operation results in the shearing of several of these connections in succession then the ends of these passages can be aligned, producing leaves in the foliation which traverse all of these passages without encountering an elliptic point.  Such sequential shearings  may introduce new singularities into the characteristic foliation of $D_{q^f}$, but these singularities can be isolated from the leaves of the foliation which traverse the resulting passages.  In what follows we will only be concerned with the foliations near such leaves, and so omit the precise nature of this alteration here, referring the reader instead to the appendices of~\cite{S11} for a more thorough treatment.

\begin{figure}[ht]
\begin{center}
\includegraphics[width=\textwidth]{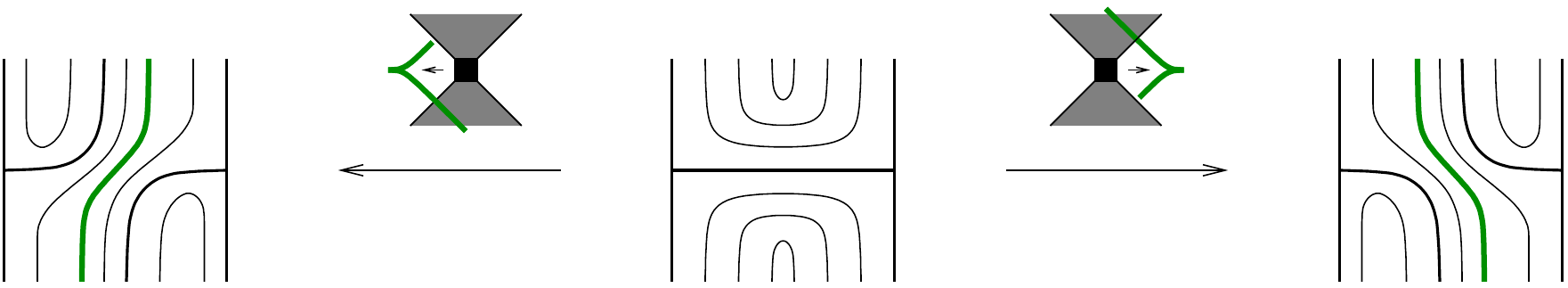}
\caption[Warping $D_{q^f}$ near hyperbolic-hyperbolic connections.]{The two forms of shearing that can occur when warping the disc $D_{q^f}$ near hyperbolic-hyperbolic connections.  The hyperbolic point on the left in each figure lies on the overstrand at the indicated box in the box-dot diagram.}
\label{hyperbolicshearing}
\end{center}
\end{figure}

Applying both of these modifications to $D_{q^f}$ then produces a \emph{warped, bubbled disc} $\wbD_{q^f}$ which agrees with the foliated disc $D_{q^f}$ except on small neighborhoods of either shared elliptics or centers of hyperbolic-hyperbolic connections, or on larger portions of the disc near sequential hyperbolic-hyperbolic shearings where this difference will be negligible in later applications.

Before continuing on, we remark that even-length tangles will prove to be resistant to our techniques throughout the remainder of this work.  It is in the application of the above operations that this disparity begins to become apparent.  If we begin with an even-length tangle, there will be no shared elliptics in the box-dot diagram, as the remainder rectangle $\r_n$ instead yields $q_n-1$ \emph{shared boxes}.  As such, we are left without the ability to shear the corresponding hyperbolic-hyperbolic connections in both directions simultaneously in a way that would allow us to record this modification in the foliation.  One approach to resolving this that will bear some merit in later work is to instead produce a pair of foliated discs, denoted $\wbD_{q^f}^1$ and $\wbD_{q^f}^2$, obtained by applying only the warping operation separately to $D_{q^f}$ relative to the individual strands\footnote{Our labeling convention on these strands will be defined formally in Section~\ref{sec:lemmas}.} $\g_{q}^1$ and $\g_{q}^2$ of $\G_{q^f}$, respectively.  Each of these discs will again agree with the foliation of the original compressing disc $D_{q^f}$ where relevant, but the information lost by not being able to identify these discs with each other near shared boxes in the diagram will limit their use considerably in Section~\ref{sec:results}. 

\subsection{Pushing and Pulling}
\label{sec:push}
Given an odd-length regular Legendrian rational tangle $\G_{q^f}$, we consider the associated box-dot diagram $\boxdot_{q^f}$ and the corresponding foliated compressing disc $D_{q^f}$.  The operations we describe in this section will provide us with two different methods of embedding the tangle $\G_{q^f}$, up to neighborhoods of its endpoints, in the characteristic foliation of the warped, bubbled disc $\wbD_{q^f}$

The first of these operations, \emph{pulling}, consists of a Legendrian isotopy of tangles $\{\psi_t\}_{t \in [0,1]}$ which we apply to the strands of $\G_{q^f}$.  Intuitively, this isotopy will serve to tighten the tangle, drawing it in toward $\wbD_{q^f}$. More precisely, this operation can be defined by way of a transverse isotopy of the front projection of $\G_{q^f}$, illustrated in Figure~\ref{pulloperation}.  

\begin{figure}[ht]
\begin{center}
\includegraphics[width=\textwidth]{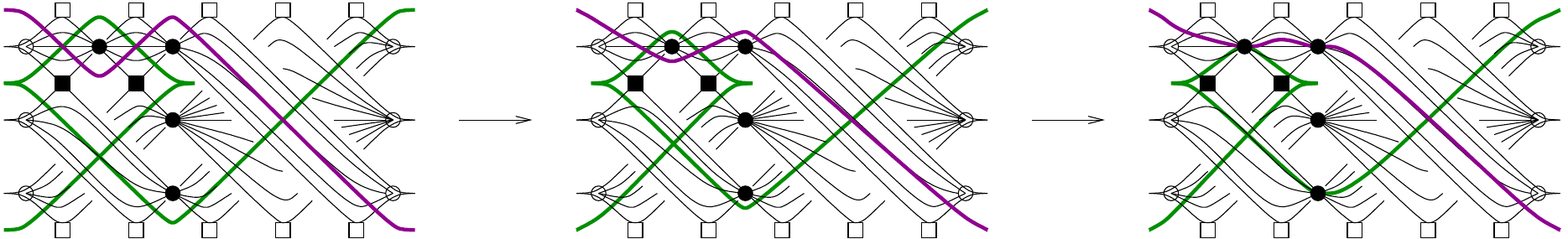}
\caption[The pull operation.]{The transverse isotopy of the front projection of $\G_{5/3}$ which realizes the corresponding pull operation.  To illustrate the homotopy of the entire tangle we include the portions of the tangle with positive slope, which should lie underneath the compressing disc shown.}
\label{pulloperation}
\end{center}
\end{figure}

Since this homotopy of fronts translates to a Legendrian isotopy, we have that the resulting tangle still consists of a pair of Legendrian curves.  More importantly, we can in fact pull the tangle until it intersects $\wbD_{q^f}$ along all but a neighborhood of the endpoints $\{p_1,p_2,p_3,p_4\}\in \partial B_q$.  The positioning of elliptic points in the foliation of $D_{q^f}$ allow for large portions of the tangle to be embedded in the foliation without any adaptation.  However, to be able to perform the strandwise isotopies of the pull operation simultaneously, we must avoid the inherent intersection of the strands that would occur at the shared elliptics.  By bubbling the disc, we prevent such intersections by allowing the individual strands to pass above and below the original disc $D_{q^f}$.  By warping the disc we produce leaves in the foliation which are Legendrian isotopic to the portions of the strands which pass through cusps (near tagged boxes) in their front projection.  This allows these portions of the tangle to be embedded in the characteristic foliation of the warped disc where they would have cut across hyperbolic-hyperbolic connections in $D_{q^f}$.  

As $\wbD_{q^f}$ remains fixed during the pull operation, it must already contain the pair of leaves that will become the image of $\G_{q^f}$.  We will denote by $\PG_{q^f}$ the Legendrian tangle consisting of these leaves, away from the endpoint elliptics, along with Legendrian arcs in $B_q$ which join these leaves to the endpoints $\{p_1,p_2,p_3,p_4\}$.  To clarify our persistent comments about endpoints, we remark here that because the boundary of the ambient ball $B_q$ must remain fixed, we cannot force the endpoints of $\G_{q^f}$ to intersect the bounding unknot $K_{q^f}$ during the pull operation, preventing us from fully embedding the tangle as a leaf in the foliation.  To account for this, we fix small neighborhoods of the boundary elliptics of $D_{q^f}$ nearest these endpoints (see Figure~\ref{egflatdisc}), requiring that the strands of $\PG_{q^f}$ be embedded as leaves in the foliation of $\wbD_{q^f}$ until they intersect the boundaries of these neighborhoods, at which point the strands leave the disc and travel directly to the corresponding endpoints on $\partial B_q$. 

\begin{figure}[ht]
\begin{center}
\includegraphics[width=0.4\textwidth]{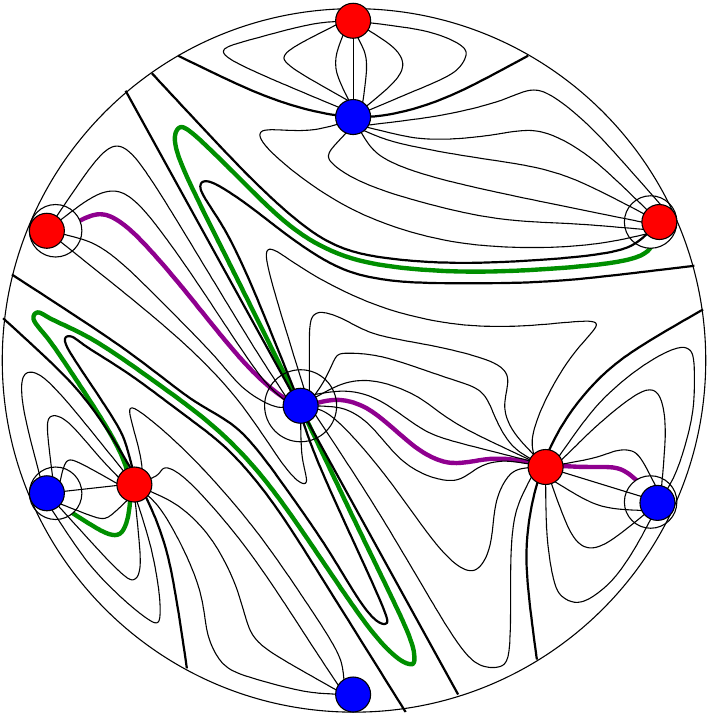}
\caption[A planar diagram of the disc $\wbD_{5/3}$.]{The compressing disc $D_{5/3}$ (cf.\ Figure~\ref{egbddisc}), under an identification with the unit disc in $\rr^2$.  To make this coherent with our previous diagrams, we assume a counterclockwise orientation on the boundary of this disc, with the endpoint elliptic in the top-left corner of the box-dot diagram corresponding to the top-left positive boundary elliptic in this figure.}
\label{egflatdisc}
\end{center}
\end{figure}

It is a classical result of contact geometry (Theorem 2.6.2 in~\cite{G08}) that given a Legendrian isotopy $\{\psi_t\}_{t \in [0,1]}$ of curves, there is a corresponding \emph{ambient contact isotopy} $\{\Psi_t\}_{t \in [0,1]}$ of $B_q$ which realizes this Legendrian isotopy.  The \emph{push operation} on the warped, bubbled disc $\wbD_{q^f}$ is then simply the effect on $\wbD_{q^f}$ of the ambient contact isotopy $\{\Psi_t\}_{t \in [0,1]}$ that this theorem associates to the reverse $\{\psi_{1-t}\}_{t \in [0,1]}$ of the pull operation $\{\psi_t\}$.  We will denote by $\pD_{q^f}$ the image $\Psi_1(\wbD_{q^f})$ of $\wbD_{q^f}$ under the push operation.  Because of the ambient nature of this operation, we will not only move the tangle $\PG_{q^f}$ inside $B_q$, but will also move the disc $\wbD_{q^f}$ along with it.  We can assume that this ambient contact isotopy is trivial over $\wbD_{q^f}$ except on some small neighborhood of $\PG_{q^f}$.  Since the push operation is an ambient contact isotopy, the characteristic foliation of $\wbD_{q^f}$ must be preserved, up to diffeomorphism, throughout this operation.    

To summarize the operations and notation developed in this and the preceding section, given a box-dot diagram $\boxdot_{q^f}$ for an odd-length Legendrian rational tangle $\G_{q^f}$ we first construct the compressing disc $D_{q^f}$ as in Section~\ref{sec:compressingdisc}.  We then modify specific portions of $D_{q^f}$ to produce the warped, bubbled disc $\wbD_{q^f}$ which contains in its foliation a tangle $\PG_{q^f}$.  The pull operation provides an isotopy from $\G_{q^f}$ to $\PG_{q^f}$, up to endpoint considerations, while the push operation pushes the disc $\wbD_{q^f}$ out to a disc $\pD_{q^f}$ carrying $\PG_{q^f}$ to $\G_{q^f}$.  

If instead $\G_{q^f}$ is an even-length tangle, we may still apply the push and pull operations as described above, but only to each of the discs $\wbD_{q^f}^{\ell}$ for $\ell\in\{1,2\}$ individually.  In this way we obtain a Legendrian arc in each of these discs which is the image of one of the strands $\g_{q}^1$ and $\g_{q}^2$ under the respective pull operations.  Similarly, for each $\ell$ we obtain a disc $\pD_{q^f}^{\ell}$, the image of $\wbD_{q^f}^{\ell}$ under the respective push operations, which contains one of the individual strands of $\G_{q^f}$.

\subsection{Necessary Conditions for Legendrian Isotopy}
\label{sec:lemmas}
Lemmas~\ref{cardinalitylemma} and~\ref{bijectionlemma} below represent the operative core of the work in this paper, as they will be our primary tools in distinguishing Legendrian flypes of regular Legendrian rational tangles in Section~\ref{sec:results}.  Before stating these lemmas, however, we must establish some additional notation referring to particular sets of elliptic points in the various discs we've constructed.  

As a matter of convention we will label the individual strands of any tangle $\G_q$ as $\g_q^1$ and $\g_q^2$ such that $\g_q^1$ will refer to the strand of $\G_q$ containing the marked point $p_1\in\partial B_q$ as one of its endpoints.  We orient the strand $\g_q^1$ so that it is traversed from $p_1$ to its other endpoint.  We then orient the other strand $\g_q^2$ in such a way that both strands are traversed in the same direction\footnote{Either both to the left or both to the right in the front projection if $q>1$, or both up or both down if $q<1$.} across the remainder rectangle $\r_n$.  These conventions provide a consistent choice of strand labeling and orientation over all the various $f$-moves on the box-dot diagram described in Section~\ref{sec:fmoves}, since these all correspond to topological isotopies of the given tangle which fix the endpoints of the strands.  While we will rely on this orientation frequently for combinatorial purposes, any of the arguments which employ our choice of orientation can be adapted easily to accommodate any other choice of orientation on the strands of $\G_q$.

To simplify the following discussion, we describe our notation explicitly only for odd-length tangles.  The same notation can be used for even-length tangles as well by replacing any mention of discs $\wbD$ and $\pD$ with the respective pairs of discs $\wbD^\ell$ and $\pD^\ell$, $\ell\in\{1,2\}$, discussed in sections~\ref{sec:bubble} and~\ref{sec:push}.  In particular, Lemmas~\ref{cardinalitylemma} and~\ref{bijectionlemma} apply to tangles of either even or odd length.

For any odd-length regular Legendrian rational tangle $\G_{q^f}$, we consider the foliated compressing disc $D_{q^f}$ arising from the box-dot diagram $\boxdot_{q^f}$.  We will denote by $E_{q^f}$ the set of tagged elliptics in $D_{q^f}$.  Similarly, we will denote by $\wbE_{q^f}$ and $\pE_{q^f}$ the sets of elliptic points which intersect the strands of $\PG_{q^f}$ in the warped, bubbled disc $\wbD_{q^f}$ and $\G_{q^f}$ in $\pD_{q^f}$ after the push operation, respectively.  By construction, $E_{q^f}$ and $\wbE_{q^f}$ are nearly identical, differing only between shared elliptics and poles of bubbles.  Thus there is a well-defined map from $\wbE_{q^f}$ to $E_{q^f}$ obtained by collapsing each bubble, identifying its poles with the corresponding shared elliptic.

Following our labeling convention on the strands of $\G_{q^f}$, we separate each of the sets $\wbE_{q^f}$ and $\pE_{q^f}$ into disjoint subsets $\wbE_{q^f}=\wbE_{q^f}^1\sqcup\wbE_{q^f}^2$ and $\pE_{q^f}=\pE_{q^f}^1\sqcup\pE_{q^f}^2$ corresponding to the elliptic points in the respective discs which intersect the individual strands $\g_q^1$ and $\g_q^2$ of $\G_{q^f}$.  Under the orientation on the strands of $\G_{q^f}$ given above, we induce an ordering on these subsets in which one elliptic point is deemed less than another if it occurs earlier along the corresponding strand of $\G_{q^f}$ or $\PG_{q^f}$, accordingly.  Under the map from $\wbE_{q^f}$ to $E_{q^f}$ we induce a similar decomposition of $E_{q^f}=E_{q^f}^1\cup E_{q^f}^2$, where $E_{q^f}^1\cap E_{q^f}^2$ is precisely the set of shared elliptics of $D_{q^f}$.  This map induces a similar ordering on the subsets $E_{q^f}^1$ and $E_{q^f}^2$.  We remark here that our prescribed orientation then produces a consistent ordering on the set of shared elliptics in $D_{q^f}$, though this will not be required in our later work.    

For two regular Legendrian rational tangles $\G_{q^f}$ and $\G_{q^g}$, it is easily seen that the cardinalities of the sets $E_{q^f}$ and $E_{q^g}$ are equal by counting the numbers of tagged dots in the corresponding box-dot diagrams.  The cardinalities of $\wbE_{q^f}$ and $\wbE_{q^g}$ are therefore also equal, as they differ from the previous counts only by doubling the number of shared elliptics.  Since contactomorphisms preserve characteristic foliations,  the cardinalities of $\pE_{q^f}$ and $\pE_{q^g}$ are equal as well.  The cardinalities of the strandwise versions of these sets, however, are more difficult to compute, as they depend on the individual parities of the twist components $q_j$.

\begin{cardinalitylemma}
\label{cardinalitylemma}
Let $\G_{q^f}$ and $\G_{q^g}$ be regular Legendrian rational tangles, and suppose there is a Legendrian isotopy from $\G_{q^f}$ to $\G_{q^g}$.  For each $\ell\in\{1,2\}$, the cardinalities of $E_{q^f}^{\ell}$, $E_{q^g}^{\ell}$, $\wbE_{q^f}^{\ell}$, $\wbE_{q^g}^{\ell}$, $\pE_{q^f}^{\ell}$ and $\pE^\ell_{q^g}$ are all equal.
\end{cardinalitylemma}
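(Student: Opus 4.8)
The plan is to split the assertion into relations internal to the $f$--side (and, symmetrically, the $g$--side) that require no hypothesis, a reduction to a single strandwise count, and the contact--geometric input that links the two sides.

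First I would dispose of the $f$--internal chain $|E_{q^f}^{\ell}|=|\wbE_{q^f}^{\ell}|=|\pE_{q^f}^{\ell}|$. The bubble--collapse map $\wbE_{q^f}\to E_{q^f}$, which identifies the two poles of each bubble with the shared elliptic it replaced, is a bijection away from the bubbles; since each bubble was built so that one of its poles lies on $\g_q^1$ and the other on $\g_q^2$, while each shared elliptic lies on both strands, this map restricts to a bijection $\wbE_{q^f}^{\ell}\to E_{q^f}^{\ell}$ for each $\ell$, so $|\wbE_{q^f}^{\ell}|=|E_{q^f}^{\ell}|$. Next, the push operation is an ambient contact isotopy of $(B_q,\xi)$ fixing $\partial B_q$ and carrying $\wbD_{q^f}$ to $\pD_{q^f}$ and $\PG_{q^f}$ to $\G_{q^f}$; a contactomorphism preserves characteristic foliations, so it restricts to a bijection $\wbE_{q^f}^{\ell}\to\pE_{q^f}^{\ell}$, giving $|\pE_{q^f}^{\ell}|=|\wbE_{q^f}^{\ell}|$. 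The identical reasoning applies with $g$ in place of $f$. Finally, since the set $E_{q^f}^1\cap E_{q^f}^2$ is precisely the collection of $q_n-1$ shared elliptics, we have $|E_{q^f}^1|+|E_{q^f}^2|=|E_{q^f}|+(q_n-1)$, and both $|E_{q^f}|$ and $q_n-1$ depend only on $q$. Hence it is enough to prove the single equality $|E_{q^f}^1|=|E_{q^g}^1|$.

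For this I would invoke the hypothesized Legendrian isotopy $\G_{q^f}\simeq\G_{q^g}$. It fixes the endpoints $p_1,\dots,p_4$ and carries $\g_{q^f}^1$ to $\g_{q^g}^1$ through Legendrian arcs, so the strandwise classical invariants agree: $tb(\g_{q^f}^1)=tb(\g_{q^g}^1)$ and $r(\g_{q^f}^1)=r(\g_{q^g}^1)$. The goal is then to show that $|E_{q^f}^1|$ — the number of tagged dots a strand of $\boxdot_{q^f}$ meets — is a function only of $q$ and of the pair $(tb(\g_{q^f}^1),r(\g_{q^f}^1))$, and likewise for $g$. I would obtain this by reading Formulas~\eqref{tbformula} and~\eqref{rformula} on the single strand $\g^1$: the cusps of $\g^1$ are exactly its tagged boxes, so $tb(\g^1)$ and $r(\g^1)$, together with the topological data of $\g^1$ (its self--writhe and its endpoint configuration), determine the number and the up/down type of tagged boxes met by $\g^1$; a bookkeeping argument intrinsic to box--dot diagrams of a fixed $q$ then converts this into the number of tagged dots met by $\g^1$, the conversion being where the parities of the components $q_j$ and the count $q_n-1$ of shared elliptics enter. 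Since $\G_{q^f}$ and $\G_{q^g}$ are front projections of flypes of the same topological tangle $G_q$, the relevant topological inputs for $\g^1$ coincide for $f$ and $g$; combined with the equality of $tb(\g^1)$ and $r(\g^1)$ this yields $|E_{q^f}^1|=|E_{q^g}^1|$, and with the first two paragraphs this proves the lemma. For even--length tangles one runs the same argument on each of the discs $\wbD^{\ell}$, $\pD^{\ell}$ separately, as prescribed in Sections~\ref{sec:bubble} and~\ref{sec:push}.

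The step I expect to be the main obstacle is the bookkeeping in the last paragraph: one must verify that the number of tagged dots met by $\g^1$ is pinned down by $q$, $tb(\g^1)$ and $r(\g^1)$ \emph{uniformly in the flype data} $f$ — in particular that the self--writhe of an individual strand, and the way tagged dots and tagged boxes are apportioned between the two strands as they weave through the remainder rectangles, transform predictably under the reflection--and--transposition $f$--moves of Section~\ref{sec:fmoves}, with the correct parity corrections and the correct treatment of shared elliptics. By contrast, the contact--geometric inputs used above — invariance of $tb$ and $r$ under Legendrian isotopy, and the fact that bubbling and the push operation are realized by ambient contactomorphisms that preserve characteristic foliations — are routine.
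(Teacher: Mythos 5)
Your first paragraph reproduces the paper's own reduction: the identifications $|E_{q^f}^{\ell}|=|\wbE_{q^f}^{\ell}|=|\pE_{q^f}^{\ell}|$ via bubble-collapse and the push contactomorphism are exactly how the paper passes among the three families of sets. The divergence---and the genuine gap---is in your key step. You propose to prove $|E_{q^f}^{1}|=|E_{q^g}^{1}|$ by showing that this count is a function of $q$ and the strandwise classical invariants $\bigl(tb(\g_q^1),r(\g_q^1)\bigr)$, but you never establish that such a function exists; you offer only a ``bookkeeping argument intrinsic to box-dot diagrams'' and yourself flag it as the main obstacle. This is not a routine verification. The number of elliptic points met by a strand is the number of local extrema the strand acquires in the specific box-dot construction; it is not an intrinsic invariant of the Legendrian arc (humps are created and destroyed freely under Legendrian isotopy), and Formulas~\eqref{tbformula} and~\eqref{rformula} tie $tb$ and $r$ to cusps and writhe, not to dots. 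Within the family of flypes of a fixed $q$ the dependence you need is delicate: a horizontal flype at a connectivity-type-$\infty$ subtangle changes $|E^{1}|$ by $2$ while simultaneously transferring a self-crossing (and reapportioning cusp types) between the strands, whereas flypes at type-$0$ or type-$1$ subtangles leave the strandwise counts unchanged but can alter the strandwise classical invariants---the paper's own examples $(2,1,1^1,2,1)$ and $(2,1,1,2,1^1)$ have identical strandwise elliptic data yet different strandwise invariants. Nothing in your sketch rules out two flype vectors whose strand-one invariants coincide (by cancellation between flypes of the two kinds) while their strand-one elliptic counts differ; if such a pair exists your ``function'' does not, and the central step collapses.

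The paper's proof avoids this issue entirely, and the compressing discs---which your key step never uses---are essential to it. After realizing the hypothesized Legendrian isotopy by an ambient contact isotopy $\{\Phi_t\}$, the paper superimposes $\Phi_1(\pD_{q^f})$ and $\pD_{q^g}$ along the common strand and interprets the signed count of singularities along the strand as the net twisting of the contact planes relative to each disc. Because the two discs share the same boundary unknot $K_q$, these net twists must agree, and because neither disc carries hyperbolic singularities along the strands (none occur in the construction and none can be introduced by the modifications or by contactomorphisms), equality of the twists forces equality of the elliptic counts, strand by strand. That framing comparison is precisely what makes a construction-dependent quantity such as $|\pE_{q^f}^{\ell}|$ comparable across the isotopy. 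To salvage your route you would have to prove the combinatorial claim uniformly over all flype vectors, which is exactly the kind of analysis of how flypes redistribute crossings, cusps, and extrema between the strands that the paper itself defers as formidable; as written, the proposal does not prove the lemma.
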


\begin{proof}
It suffices to prove for each $\ell$ that the cardinalities of $\pE_{q^f}^{\ell}$ and $\pE^\ell_{q^g}$ are equal, as the contactomorphisms given by the inverses of the push operations for the respective discs, restricted to the corresponding strand of the respective tangles, will then extend the proof to $\wbE_{q^f}^{\ell}$ and $\wbE_{q^g}^{\ell}$.  Collapsing the bubbles in $\wbD_{q^f}$ and $\wbD_{q^g}$ will further extend the proof to $E_{q^f}^{\ell}$ and $E_{q^g}^{\ell}$, establishing the lemma.

As mentioned in Section~\ref{sec:push}, the existence of a Legendrian isotopy $\{\varphi_t\}_{t \in [0,1]}$ from $\G_{q^f}$ to $\G_{q^g}$ implies the existence of an ambient contact isotopy $\{\Phi_t\}_{t \in [0,1]}$ of $B_q$ which realizes this Legendrian isotopy.  The intersection of the image $\Phi_1(\pD_{q^f})$ with $\pD_{q^g}$ contains the Legendrian tangle $\G_{q^g}=\Phi_1(\G_{q^f})$ as a leaf in its characteristic foliation, up to the usual endpoint considerations.  For the duration of the proof, we will modify our convention regarding endpoints to add rigidity to the argument below.  Specifically, we instead focus on the entirety of the leaves in the respective characteristic foliations corresponding to each of the tangles.  Thus, instead of leaving the discs to travel to the marked endpoints on $B_q$, the embedded tangles will now end at the endpoint elliptics in the respective discs.

To prove the lemma, we first consider a map from a neighborhood in $\mathbb{R}^3$ of the first strand of $\G_{q^g}$ to the cylinder $\{(u,v)\in\rr^2|u^2+v^2\leq 1\}\times I$ where $I$ is a closed interval.  Specifically, we map the portion of $\pD_{q^g}$ inside this neighborhood to $\{(u,0)|-1\leq u\leq 1\} \times I$, such that $\g_q^1$ is identified with $\{(0,0)\}\times I$.  Under this identification, the unit normal vectors to the contact planes along $\g_q^1$ trace out what we will call a \emph{contact curve} on the boundary of this cylinder.  Similarly, the unit normal vectors to $\pD_{q^g}$ and their negatives along $\g_q^1$ produce a pair of curves that we will call \emph{normal curves} of $\pD_{q^g}$, given by $\{(0,\pm 1)\}\times I$.  The singularities of the characteristic foliation of $\pD_{q^g}$ then occur precisely when the contact curve intersects the normal curves on this cylinder.  The type of each singularity is determined by the direction in which the contact curve crosses the normal curves, as in Figure~\ref{foliationtube}.  Thus, in general, the number of elliptic points minus the number of hyperbolic points along $\g_q^1$ gives a measure of how much the contact structure twists relative to the disc along this strand.  

Using a similar identification, we can obtain a measure of the net twisting of $\Phi_1(\pD_{q^f})$ along $\g_q^1$.  Since the discs $\Phi_1(\pD_{q^f})$ and $\pD_{q^g}$ share a common boundary in the unknot $K_q$, the net twisting of these surfaces about $\g_q^1$ must be the same.  Thus, if these discs contain different numbers of elliptic singularities along this strand, then one of these surfaces must also contain hyperbolic points along this strand.  Since such hyperbolic points do not occur in our constructions, and cannot be introduced along the strand through our modifications or contactomorphisms, we have that both discs $\Phi_1(\pD_{q^f})$ and $\pD_{q^g}$ must contain the same number of elliptic singularities, and no hyperbolic singularities, along $\g_q^1$.  Since $\Phi_1$ preserves the characteristic foliation of $\pD_{q^f}$, and this count of elliptic singularities only includes the two additional boundary elliptics that form the modified endpoints of the strand, the cardinalities of $\pE_{q^f}^1$ and $\pE_{q^g}^1$ must be equal.  An identical argument applied to the other strand $\g_q^2$ then establishes the equality of the cardinalities of $\pE_{q^f}^2$ and $\pE_{q^g}^2$.
\end{proof}

\begin{figure}[ht]
\begin{center}
\includegraphics[width=.6\textwidth]{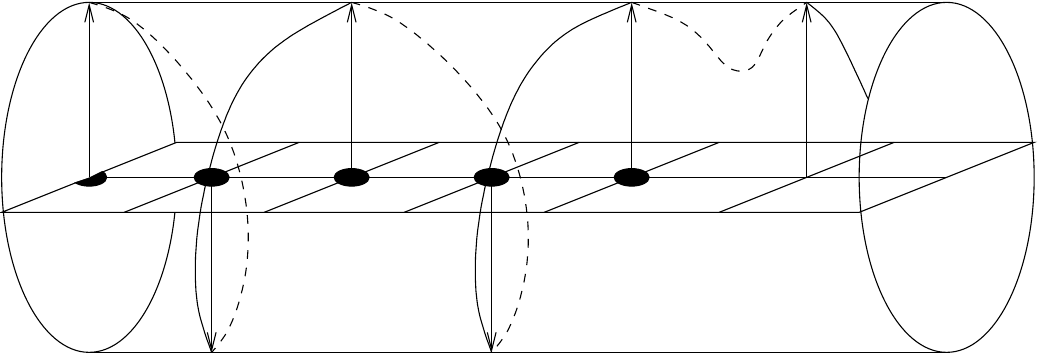}
\caption[Singularities measure the twisting of $\xi$ relative to a surface.]{The net twisting of the contact structure along an arc in a surface can be measured by counting elliptic and hyperbolic singularities along the arc.}
\label{foliationtube}
\end{center}
\end{figure}

\begin{bijectionlemma}
\label{bijectionlemma}
Let $\G_{q^f}$ and $\G_{q^g}$ be regular Legendrian rational tangles, and suppose there is a Legendrian isotopy from $\G_{q^f}$ to $\G_{q^g}$.  Then there exists a bijection $\beta$ from $E_{q^f}$ to $E_{q^g}$ which restricts to order-preserving bijections $\beta^{\ell}$ from $E_{q^f}^{\ell}$ to $E_{q^g}^{\ell}$, for each $\ell \in \{1,2\}$.
\end{bijectionlemma}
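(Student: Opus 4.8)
The plan is to reduce the statement to a count of elliptic points along the individual strands, in the spirit of the proof of Lemma~\ref{cardinalitylemma}, and then to read the orderings off the characteristic foliations of $\pD_{q^g}$ and of the image of $\pD_{q^f}$ under an ambient contact isotopy realizing the given Legendrian isotopy.

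First I would fix a Legendrian isotopy $\{\varphi_t\}_{t\in[0,1]}$ from $\G_{q^f}$ to $\G_{q^g}$ together with an ambient contact isotopy $\{\Phi_t\}$ of $B_q$ realizing it, so that $\Phi_1(\G_{q^f})=\G_{q^g}$. Adopting the modified endpoint convention from the proof of Lemma~\ref{cardinalitylemma}, both discs $\Phi_1(\pD_{q^f})$ and $\pD_{q^g}$ contain $\G_{q^g}$---including the two endpoint elliptics on each strand---as a union of leaves of their characteristic foliations, and they share the boundary $K_q$. By that argument, along each strand $\g_q^{\ell}$ the discs $\Phi_1(\pD_{q^f})$ and $\pD_{q^g}$ carry the same number of elliptic singularities and no hyperbolic singularities. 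Since the orientation of $\g_q^{\ell}$ linearly orders the elliptic points on it in either disc, and the two counts agree, there is a unique order-preserving bijection between them, matching the $k$-th point along $\g_q^{\ell}$ in one disc with the $k$-th in the other. As $\Phi_1$ is a contactomorphism it carries $(\pD_{q^f})_{\xi}$ to $(\Phi_1(\pD_{q^f}))_{\xi}$ order-preservingly along $\g_q^{\ell}$; precomposing with $\Phi_1$, and then with the foliation-preserving inverses of the push operations of Section~\ref{sec:push}, yields for each $\ell\in\{1,2\}$ an order-preserving bijection $\pE_{q^f}^{\ell}\to\pE_{q^g}^{\ell}$, hence one $\wbE_{q^f}^{\ell}\to\wbE_{q^g}^{\ell}$, and, after collapsing bubbles, a candidate $\beta^{\ell}\colon E_{q^f}^{\ell}\to E_{q^g}^{\ell}$. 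Each $\beta^{\ell}$ is order-preserving, and is in fact the \emph{only} order-preserving bijection between these finite linearly ordered sets.

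It remains to check that $\beta^1$ and $\beta^2$ agree on the set $E_{q^f}^1\cap E_{q^f}^2$ of shared elliptics, so that they glue to a single bijection $\beta\colon E_{q^f}\to E_{q^g}$ restricting to the $\beta^{\ell}$; this is the main obstacle. The shared elliptics come from the $q_n-1$ shared dots in the remainder rectangle $\r_n$, which is subdivided identically for every f-move of Section~\ref{sec:fmoves}; along each strand they appear as a single block of $q_n-1$ consecutive elliptic points, and by the strand-labeling convention of Section~\ref{sec:lemmas} (both strands crossing $\r_n$ in the same direction) the $k$-th shared elliptic met along $\g_q^1$ is the bubble-partner of the $k$-th one met along $\g_q^2$, for $\G_{q^f}$ and $\G_{q^g}$ alike. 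Granting that this block occupies the same ordinal position along $\g_q^{\ell}$ in $\boxdot_{q^f}$ as in $\boxdot_{q^g}$, the unique---hence forced---bijection $\beta^{\ell}$ must carry the block to the block preserving its internal order, whence $\beta^1(e)=\beta^2(e)$ for every shared $e$ and the glued $\beta$ is the desired map.

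So the crux is the combinatorial claim that the number of non-shared tagged elliptics and endpoint elliptics preceding the shared block along $\g_q^{\ell}$ is independent of $f$---equivalently, that no f-move can slide a tagged elliptic past the block along a strand. I would prove this by tracking the basic f-moves of Section~\ref{sec:fmoves} on the subdivision of $\R_q$: such a move only reflects the nested remainder rectangle $\r_{j+1}$ (with $j\le n-1$) and transposes it with a neighboring square of $\r_j$, so it never transports a square across $\partial\r_n$ nor changes the way the two strand arcs of $\G_q$ run from the endpoints $p_i$ to the boundary of $\r_n$. That last bookkeeping step, rather than anything contact-geometric, is where I expect the real work to lie.
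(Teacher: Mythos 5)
Your reduction is sound up to the point you yourself flag as the crux, but that crux is not a bookkeeping step: it is false. The claim that the block of $q_n-1$ shared elliptics occupies the same ordinal position along $\g_q^{\ell}$ in $\boxdot_{q^f}$ as in $\boxdot_{q^g}$ --- equivalently, that no f-move slides a tagged elliptic past the shared block --- is exactly what horizontal flypes violate. Each horizontal flype at a component $q_j$, $j$ odd, moves the shared block one point further along $\g_q^1$ (this is Figure~\ref{hflypeproof}): traversing $\g_q^1$ in $\boxdot_{q^f}$ one meets $\sigma(f)$ tagged dots before the shared block, and $\sigma(f)$ depends on $f$. Indeed, if your combinatorial claim held, Theorem~\ref{oddlengthhorizontalflypes} would be vacuous, since its proof derives a contradiction precisely from the fact that the unique strandwise order-preserving bijections fail to glue when $\sigma(f)\neq\sigma(g)$. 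So the compatibility of $\beta^1$ and $\beta^2$ on shared elliptics cannot be an $f$-independent feature of box-dot diagrams; it must be extracted from the hypothesis that a Legendrian isotopy exists, which your argument uses only through the cardinality count of Lemma~\ref{cardinalitylemma}. (Note also that the strandwise order-preserving bijections are automatic once cardinalities agree --- between equinumerous finite linearly ordered sets there is a unique one --- so the entire content of the lemma beyond Lemma~\ref{cardinalitylemma} is the shared-elliptic compatibility your reduction leaves unproved.)

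The paper obtains this compatibility geometrically. After the ambient contact isotopy $\{\Phi_t\}$ realizing the given Legendrian isotopy, it inserts a further ambient contact isotopy (the press operation, justified by Giroux's theorem on surfaces with diffeomorphic characteristic foliations and common boundary) aligning $\Phi_1(\pD_{q^f})$ with $\pD_{q^g}$ along a neighborhood of $\G_{q^g}$; concatenating the push operations, $\{\Phi_t\}$, and the press yields a single ambient contact isotopy $\{\Omega_t\}$ carrying $\wbD_{q^f}$ onto $\wbD_{q^g}$ near the embedded tangle, and $\widetilde{\beta}$ is the restriction of $\Omega_1$ to $\wbE_{q^f}$. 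The shared-elliptic compatibility then follows because bubbles go to bubbles: the equator $C_{\epsilon}(e)$ of a bubble is a simple closed curve transverse to the foliation enclosing exactly the two poles, contactomorphisms preserve this transversality, and a transverse simple closed curve cannot enclose exactly two elliptic points unless they are the poles of a single bubble (Figure~\ref{nontransverse}). That transversality argument, which your proposal discards when it passes immediately to the abstract order-preserving matching, is the essential missing ingredient; some argument of this geometric kind is unavoidable, since as noted above no purely combinatorial statement about the diagrams can supply the gluing.
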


\begin{proof}
We assume first that $\G_{q}$ is an odd-length tangle.  To prove the lemma in this case, it will suffice to construct a bijection $\widetilde{\beta}$ from $\wbE_{q^f}$ to $\wbE_{q^g}$ which restricts to order-preserving bijections from $\wbE_{q^f}^{\ell}$ to $\wbE_{q^g}^{\ell}$, for each $\ell \in \{1,2\}$, and maps the poles of each bubble in $\wbD_{q^f}$ to the poles of a bubble in $\wbD_{q^g}$.  The map $\beta$ will then be defined to agree with $\widetilde{\beta}$ on all but the shared elliptics, where $\beta$ will instead be obtained from $\widetilde{\beta}$ via the natural identifications between these points and the poles of their corresponding bubbles.  

To construct the map $\widetilde{\beta}$, we will require an additional modification---an ambient contact isotopy that we will refer to as the \emph{press operation}---to our existing constructions.  As in the proof of Lemma~\ref{cardinalitylemma}, there is an ambient contact isotopy $\{\Phi_t\}_{t\in[0,1]}$ of $B_q$ which realizes the Legendrian isotopy from $\G_{q^f}$ to $\G_{q^g}$.  Recall too that the image $\Phi_1(\pD_{q^f})$ and the disc $\pD_{q^g}$ intersect along the tangle $\G_{q^g}$.  By our constructions, the tangle in each of these discs consists only of elliptic singularities and leaves of the foliation which connect them smoothly.  Further, Lemma~\ref{cardinalitylemma} ensures that we have the same number of elliptic singularities in either disc along each strand of $\Gamma_{q^g}$.  Small neighborhoods of this tangle in both discs will then contain diffeomorphic foliations, differing only in the exact locations of these elliptic singularities along the individual strands.  Thus, by repositioning these singularities, we can construct yet another disc bounded by $K_q$ whose characteristic foliation agrees with $\Phi_1(D_{q^f})$ away from $\G_{q^g}$, and which agrees with $\pD_{q^g}$ near $\G_{q^g}$.  This disc is obtained intuitively by pressing $\Phi_1(D_{q^f})$ onto $\pD_{q^g}$ along a small neighborhood of the tangle.  Since $\Phi_1(D_{q^f})$ and this new disc have diffeomorphic foliations and the same boundary, a theorem of Giroux\footnote{See Theorem 2.5.22 in~\cite{G08}.  However, this theorem, as stated, applies specifically to closed surfaces.  To rectify this, we can glue a disc in $\rr^3-B_q$ to $\Phi_1(\pD_{q^f})$ along $K_q$, producing a sphere.  Since this (now closed) surface bears diffeomorphic foliations before and after the press modification, we may appeal to the proof of the cited theorem.} gives that this operation can in fact be performed by way of an ambient contact isotopy $\{\Upsilon_t\}_{t\in[0,1]}$.

To prove the lemma, we will also employ the push operations $\{\Psi_t^f\}$ and $\{\Psi_t^g\}$ which carry $(\wbD_{q^f},\PG_{q^f})$ to $(\pD_{q^f},\G_{q^f})$ and $(\wbD_{q^g},\PG_{q^g})$ to $(\pD_{q^g},\G_{q^g})$, respectively.  The smooth concatenation of the isotopies $\{\Psi_t^f\}$, $\{\Phi_t\}$, $\{\Upsilon_t\}$, and $\{\Psi_{1-t}^g\}$ yields an ambient contact isotopy $\{\Omega_t\}_{t\in[0,1]}$ of $B_q$ whose existence incorporates all of the technology developed in sections~\ref{sec:boxdotdiagrams}, \ref{sec:boxdotconstructions}, and~\ref{sec:tanglesandfoliations}.  In particular, the isotopy $\{\Omega_t\}$ first carries $(\wbD_{q^f}, \PG_{q^f})$ to $(\pD_{q^f}, \G_{q^f})$ through contactomorphisms, preserving the characteristic foliation and giving us a natural identification between $\wbE_{q^f}$ and $\pE_{q^f}$.  Similarly, we obtain identifications between $\pE_{q^f}$ and $\Phi_1(\pE_{q^f})$, and $\Phi_1(\pE_{q^f})$ and $\Upsilon_1\circ\Phi_1(\pE_{q^f})=\pE_{q^g}$ from the portions of $\{\Omega_t\}$ corresponding to the given Legendrian isotopy and the press operation, respectively.  Finally, the last stage of $\{\Omega_t\}$ pulls the pressed, carried, pushed, warped, bubbled disc $\Upsilon_1\circ\Phi_1(\pD_{q^f})$ down to intersect with $\wbD_{q^g}$ along a neighborhood of $\PG_{q^g}$, giving an identification between $\pE_{q^g}$ and $\wbE_{q^g}$.  Furthermore each of these identifications between sets of elliptic singularities must also preserve the ordering of these sets induced by the strandwise orientations, as the endpoints of the tangle are fixed throughout.  Thus we define the map $\widetilde{\beta}$ to be the restriction of $\Omega_1$ to $\wbE_{q^f}$.

To complete the proof we need only verify that the bijection $\widetilde{\beta}$ is respective of the bubbles.  To this end, we recall from Section~\ref{sec:bubble} that for each bubble in $\wbD_{q^f}$ arising from a shared elliptic $e$ in $D_{q^f}$, the equator $C_{\epsilon}(e)$ of the bubble consists of a single simple closed curve which is transverse to the foliation of $\wbD_{q^f}$.  By construction, the poles of this bubble are the only singularities of the characteristic foliation of $\wbD_{q^f}$ inside this curve.  Since ambient contact isotopies preserve the transversality of this curve, its image $\Upsilon_1\circ\Phi_1\circ\Psi_1^f(C_{\epsilon}(e))$ must still enclose exactly two singularities---the images of these poles.  If we suppose to the contrary that these singularities are not themselves the poles of a single bubble in $\pD_{q^g}$ then instead they must correspond either to distinct tagged, but not shared, elliptics, to poles of two distinct bubbles, or to one tagged, but not shared, elliptic and one pole of a bubble.  In any of these cases, some portion of the characteristic foliation of the image $\Upsilon_1\circ\Phi_1(\pD_{q^f})$ must be isotopic to the region depicted in Figure~\ref{nontransverse}.  This figure also serves to illustrate that such a closed curve cannot enclose exactly two elliptic singularities and still be transverse to the surrounding foliation, providing a contradiction.  Thus the poles of each bubble in $\wbD_{q^f}$ must correspond to the poles of a single bubble in $\wbD_{q^g}$, and the map $\widetilde{\beta}$ provides the desired bijection.

If instead we assume that $\G_q$ is an even-length tangle, the proof is similar, though somewhat simpler.  We consider the discs $\wbD_{q^f}^1$ and $\wbD_{q^f}^2$ independently, and construct a corresponding ambient contact isotopy $\{\Omega_t^{\ell}\}$ for each $\ell\in\{1,2\}$.  We define $\widetilde{\beta}^{\ell}$ as the restriction of $\Omega_1^{\ell}$ to $\wbE_{q^f}^{\ell}$.  Defining $\widetilde{\beta}$ to be the union of these two maps then completes the proof, as there are no shared elliptics, and hence no bubbles, to be concerned with in this case.
\end{proof}

\begin{figure}[ht]
\begin{center}
\includegraphics[width=0.25\textwidth]{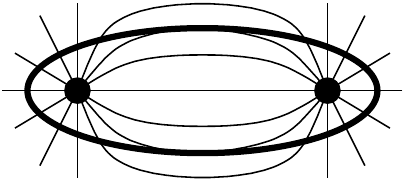}
\caption[A non-transverse curve.]{A simple closed curve which encloses exactly two elliptic singularities of a characteristic foliation cannot be transverse to that foliation.}
\label{nontransverse}
\end{center}
\end{figure}

\section{On the Classification of Legendrian Flypes} 
\label{sec:results}
Here we prove a collection of results addressing the question of whether Legendrian flypes of regular Legendrian rational tangles can be realized via Legendrian isotopy\footnote{For simplicity, we will hereafter refer to Legendrian flypes, regular Legendrian rational tangles, and Legendrian isotopies as flypes, tangles, and isotopies, respectively, except in the formal statements of theorems.}.  The first of these results, Proposition~\ref{trivialflypes}, is well-known.  We include this statement for completeness, though we omit its trivial proof.

\begin{trivialflypes}
\label{trivialflypes}
The regular Legendrian rational tangles which correspond to the vectors $(q_n,q_{n-1}^{f_{n-1}},\ldots,q_1^{f_1})$ and $(q_n^{f_n},q_{n-1}^{f_{n-1}},\ldots,q_1^{f_1})$ are Legendrian isotopic for any $n$, and any choice of $f_n$ with $0 \leq f_n \leq q_n$.
\end{trivialflypes}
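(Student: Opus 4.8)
The plan is to reduce the statement to the observation that a flype performed at a crossing of the \emph{first} twist component $q_n$ is a trivial move, both topologically and in the Legendrian category. Recall from Section~\ref{sec:tangleconstruction} that the crossings corresponding to $q_n$ constitute the twist region created first in the construction of $\G_q$, so that the subtangle preceding the $k$-th such crossing is nothing more than the chain of the first $k-1$ crossings of $q_n$ itself: a uniform horizontal twist region of $k-1$ half-twists when $n$ is odd, or a uniform vertical one when $n$ is even. First I would record the box-dot manifestation of this: the $q_n$ squares of dimension $r_n = 1$ subdivide the remainder rectangle $\r_n$ in the unique possible way (Section~\ref{sec:fmoves}), which is precisely the reason the $f$-move vectors carry no component for $q_n$. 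Hence the diagram $\boxdot_{(q_n, q_{n-1}^{f_{n-1}}, \ldots, q_1^{f_1})}$ is the same box-dot diagram one would associate to $(q_n^{f_n}, q_{n-1}^{f_{n-1}}, \ldots, q_1^{f_1})$, so the two tangles in the statement are built from identical combinatorial data.

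To turn this into an honest isotopy statement rather than a definitional coincidence, I would next inspect the front projection directly. Performing a Legendrian flype at the $k$-th crossing of $q_n$ rotates the sphere enclosing the uniform twist region formed by the first $k-1$ of these crossings; because that twist region is invariant under the relevant $\pi$-rotation, it is returned to itself, and the only net effect on the front projection is a rearrangement internal to the (uniform) twist region $q_n$, which leaves the front projection unchanged up to a transverse isotopy preserving the two front conditions of Section~\ref{sec:legendrianization}. By the discussion there, such a transverse isotopy lifts, via the Legendrian lifting map, to a Legendrian isotopy of tangles. Iterating over the $f_n$ flypes performed at $q_n$ then produces a Legendrian isotopy from $\G_{(q_n, q_{n-1}^{f_{n-1}}, \ldots, q_1^{f_1})}$ to $\G_{(q_n^{f_n}, q_{n-1}^{f_{n-1}}, \ldots, q_1^{f_1})}$, for any $n$ and any $f_n$ with $0 \leq f_n \leq q_n$.

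The only point that warrants any care --- and the reason the proposition is genuinely easy rather than merely asserted --- is checking that the rearrangement induced by the flype really is a transverse front isotopy, with no crossing change or cusp creation forced along the way. This is immediate from our twisting convention: every crossing of $q_n$ has the same handedness and the same geometric type, so this twist region Legendrianizes to a uniform Legendrian twist region through which a single crossing can be slid freely; there is none of the structure that, for the other twist components, will make horizontal flypes detectable through Lemmas~\ref{cardinalitylemma} and~\ref{bijectionlemma}. I therefore expect no real obstacle here, consistent with the result being well known; the entire content lies in recognizing $q_n$ as a twist region on which flyping acts trivially.
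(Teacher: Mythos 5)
Your argument is correct: the paper deliberately omits this proof, labeling the proposition well known and trivial, and what you give is exactly the intended justification --- the $f$-move vector carries no $q_n$-component because the unit squares subdivide $\r_n$ uniquely, and a flype at the $k$-th crossing of $q_n$ encloses only an initial segment of that same uniform twist region, which is carried to itself by the flype rotation, so the front projection is unchanged up to a transverse isotopy and hence the tangles are Legendrian isotopic. The only point you might spell out slightly more is the even-$n$ case, where the enclosed vertical twist region carries cusps; there the reflection swaps the left- and right-pointing cusps occurring between consecutive crossings, which again reproduces the same front, so no genuine gap arises.
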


In~\cite{T01}, Traynor showed by way of an isotopy sequence that the tangles with vectors $(q_n,q_{n-1},\ldots,q_2,q_1)$ and $(q_n,q_{n-1}^{f_{n-1}},q_{n-2},q_{n-3}^{f_{n-3}},\ldots,q_2^{f_2},q_1)$ are Legendrian isotopic for $n$ odd and $q_j$ even for $j$ odd---a subclass of regular tangles which she refers to as \emph{minimal} tangles.  In fact this same isotopy sequence can be used to extend this result to all regular tangles, omitting the minimality conditions on $n$ and the $q_j$'s.  Theorem~\ref{verticalflypes} is more substantial even than this extension, allowing for a fixed choice of exponents $f_j$, for $j$ odd, common to both of these vectors.  The proof of this theorem is adapted from~\cite{T01}, requiring an additional isotopy sequence given in Figure~\ref{vflypeproof}.

\begin{verticalflypes}
\label{verticalflypes}
If two regular Legendrian rational tangles differ only by vertical Legendrian flypes, then they are Legendrian isotopic.
\end{verticalflypes}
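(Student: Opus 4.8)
The plan is to reduce the statement to a single elementary vertical flype and to realize that flype by an explicit, strictly local Legendrian isotopy of front projections, adapting and strengthening the isotopy sequence of Traynor~\cite{T01}. First I would observe that Legendrian isotopy is an equivalence relation, so it suffices to connect two box-dot diagrams $\boxdot_{q^f}$ and $\boxdot_{q^g}$ whose vectors agree in every entry except one even index $j$, where $g_j=f_j+1$; since each $f_j,g_j$ ranges over $\{0,1,\dots,q_j\}$, any two vectors with the same odd part are joined by a finite chain of such elementary steps. Using Proposition~\ref{trivialflypes} to normalize the $q_n$-exponent on both sides (this matters only when $n$ is even, the one even slot not of the form $2\le j<n$), I may assume throughout that $2\le j<n$. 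The odd-indexed exponents $f_k=g_k$, $k$ odd, are then arbitrary but common to the two tangles, and it is precisely the freedom to carry these horizontal-flype exponents along unchanged that makes the statement stronger than the corresponding result in~\cite{T01}.

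\textbf{The core isotopy.} The tangles $\G_{q^f}$ and $\G_{q^g}$, built as in Section~\ref{sec:boxdottoptangles}, agree outside a neighborhood $N$ of the $j$-th (vertical) twist region together with the adjacent remainder rectangle $\r_{j+1}$ carrying the preceding subtangle, and inside $N$ their front projections differ by sliding one crossing of the vertical twist from one side of (a reflected copy of) the subtangle to the other. I would exhibit in Figure~\ref{vflypeproof} a finite sequence of \emph{transverse} isotopies of fronts --- Legendrian Reidemeister II and III moves together with ambient planar isotopies supported in $N$ --- taking the front of $\G_{q^f}$ to that of $\G_{q^g}$. The essential property to emphasize is locality: every move is supported in $N$, which is disjoint from every other twist region, so the sequence reflects the subtangle only as a rigid block and never touches its internal structure, nor any of the other horizontal or vertical flypes recorded in the diagram. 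Hence the very same sequence works verbatim for every common value of the exponents $f_k$, $k$ odd --- this is exactly the content beyond~\cite{T01}.

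\textbf{Verifying it is a Legendrian isotopy inside $B_q$.} Next I would check that each intermediate picture is a legitimate front projection: isolated transverse double points resolved so that the overstrand has the more negative slope, cusps replacing vertical tangencies, and the two strands always meeting transversely so that the lift stays embedded and disjoint. Since a vertical flype changes no horizontal twisting, the numbers of upward and downward cusps on each strand --- hence the strandwise $tb$ and $r$ --- are preserved at every stage, a useful consistency check. I would also arrange that each intermediate front stays inside the rectangle $\R_q$ and has all slopes of magnitude strictly less than one, so that the Legendrian lift never leaves $B_q=[0,P]\times[-1,1]\times[0,Q]$ and the endpoints $p_1,\dots,p_4$ remain fixed. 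The lift of this front isotopy then extends, by Theorem~2.6.2 of~\cite{G08} (exactly as in Section~\ref{sec:push}), to an ambient contact isotopy of $B_q$ fixing $\partial B_q$, and this is the required Legendrian isotopy from $\G_{q^f}$ to $\G_{q^g}$.

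\textbf{The main obstacle.} The delicate point is this last verification together with the geometry underlying the previous step: the topological flype is a rigid rotation of a ball, which does not preserve $\xi_{std}$ and so cannot be used directly, so one must instead produce an honest front-move sequence achieving the same combinatorial reconfiguration while never creating an arc of slope $\ge 1$ in absolute value and never straying outside $\R_q$. Making this rigorous amounts to showing that the vertical twist region always affords enough vertical ``room'' --- guaranteed by the regularity of the continued fraction for $q$ --- to slide the crossing past the reflected subtangle through bounded-slope arcs, and to confining all of the moves to the neighborhood $N$. Once the sequence of Figure~\ref{vflypeproof} is drawn and these slope and containment bounds are read off from it, the theorem follows.
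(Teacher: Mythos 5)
There is a genuine gap at the heart of your argument: the claim that a single elementary vertical flype can be realized by a front-move sequence supported in a neighborhood $N$ that treats the preceding subtangle ``only as a rigid block and never touches its internal structure.'' A vertical Legendrian flype does not merely slide a crossing past the block --- it replaces the block $\Gamma$ by its reflection $\reflectbox{$\Gamma$}$ (cf.\ Figure~\ref{legflypes}), and for a generic subtangle the reflected front is genuinely different from the original inside the block. No sequence of Legendrian Reidemeister II/III moves and planar isotopies that avoids the interior of the block can change the block's interior, so the local sequence you posit cannot exist in general. The rigid $180^\circ$ rotation that would effect the reflection is a contactomorphism of $(\rr^3,\xi_{std})$, but the intermediate rotations are not, and whether the reflection is achievable by a Legendrian isotopy fixing $\partial B_q$ is exactly the content of the theorem, not something you may assume. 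Relatedly, your assertion that strandwise $tb$ and $r$ are preserved ``since a vertical flype changes no horizontal twisting'' is a consequence one would like, not an argument that the isotopy exists.

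The paper's proof shows why this point cannot be made local: it proceeds by induction on the length of the tangle, using Traynor's isotopy sequence (Figure~\ref{vflypeproof}) together with a second, reflected version of it. The passage from the third to the fourth diagram of that sequence --- precisely the step where the crossing must get past the (reflected) subtangle --- is not a local move at all; it is an application of the induction hypothesis to a strictly shorter tangle in which the migrating vertical twist has been absorbed into the next twist component, i.e.\ a tangle of the form $(q_{2k+2},\ldots,(q_4+1)^{f_4},0)$, with a further case division according to whether $q_3-f_3$ vanishes (if it does not, the secondary sequence and another appeal to induction are needed first). In other words, realizing one vertical flype Legendrianly requires recursively re-flyping the internal structure of the subtangle, which is the opposite of your ``works verbatim for every common value of the exponents, block kept rigid'' claim. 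Your outer reductions --- decomposing into elementary flypes, normalizing the $q_n$-exponent via Proposition~\ref{trivialflypes}, and the slope/containment bookkeeping for staying inside $B_q$ --- are fine, but the core step is missing, and it is where all the work in the paper's proof lives.
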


\begin{proof}
We first refer the reader to the isotopy sequence shown in Figure~\ref{vflypeproof}, also adapted from~\cite{T01}.  We will employ two versions of this sequence; one shown explicitly, and the other obtained by taking $b=0$ and rotating the page $180^\circ$ about its vertical axis, yielding the sequence originally given by Traynor.  For brevity, we will refer to these as the primary and secondary sequences, respectively.  

We proceed by induction on the length of the tangles.  The proof divides naturally into two cases, corresponding to the parity of this length.  The result follows for tangles of length two from Proposition~\ref{trivialflypes}, and for tangles of length three from the general form of Traynor's result.  This establishes the base step of each case, as vertical flypes cannot be applied to tangles of length one.  We  prove the induction step for only even-length tangles explicitly, as the proof of the odd-length case is nearly identical, differing only in the particular notation employed.

We assume that the statement is true for all tangles of length $2k$, and seek to prove that 
\[(q_{2k+2},q_{2k+1}^{f_{k+1}},q_{2k},\ldots, q_3^{f_3}, q_2,q_1^{f_1})\simeq (q_{2k+2},q_{2k+1}^{f_{2k+1}},q_{2k}^{f_{2k}},\ldots, q_3^{f_3}, q_2^{f_2},q_1^{f_1}).\]
To establish this equivalence, it suffices to show that  
\[(q_{2k+2},q_{2k+1}^{f_{2k+1}},q_{2k},\ldots, q_3^{f_3}, q_2,0)\simeq(q_{2k+2},q_{2k+1}^{f_{2k+1}},q_{2k}^{f_{2k}},\ldots, q_3^{f_3}, q_2^{f_2},0),\] 
since such an isotopy can then be performed while fixing the portion of the tangle corresponding to $q_1$.  
Furthermore, the induction hypothesis implies that \[(q_{2k+2},q_{2k+1}^{f_{2k+1}},q_{2k},\ldots, q_3^{f_3}, q_2,0)\simeq(q_{2k+2},q_{2k+1}^{f_{2k+1}},q_{2k}^{f_{2k}},\ldots, q_3^{f_3}, q_2,0).\] 
Thus, we need only focus on applying flypes at twist component $q_2$.  To this end, we first show that 
\[(q_{2k+2},q_{2k+1}^{f_{2k+1}},q_{2k}^{f_{2k}},\ldots, q_3^{f_3}, 1,0)\simeq(q_{2k+2},q_{2k+1}^{f_{2k+1}},q_{2k}^{f_{2k}},\ldots, q_3^{f_3}, 1^1,0).\]

If $f_3\neq 0$, then in the primary sequence shown in Figure~\ref{vflypeproof} we set $a=f_3$ and $b=q_3-f_3$, and we take $\G$ be the tangle $(q_{2k+2}, q_{2k+1}^{f_{2k+1}}, q_{2k}^{f_{2k}},\ldots, q_4^{f_4}, 0)$.  We then apply the first three steps of this isotopy sequence, producing the fourth diagram shown.  The proof now divides again, depending on the value of $q_3-f_3$.

\begin{figure}[ht]
\begin{center}
\resizebox{\textwidth}{!}{\input{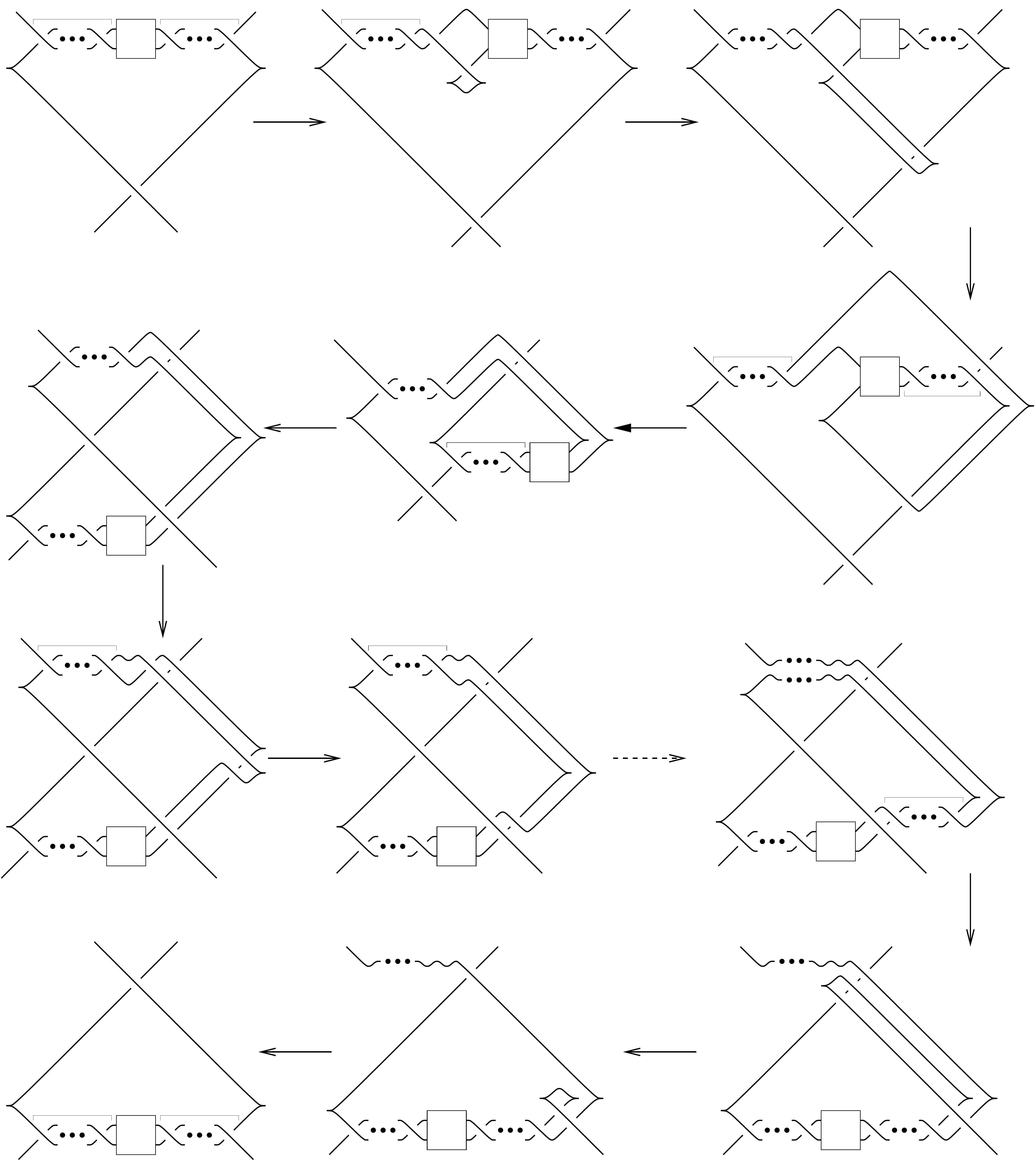_t}}
\caption[The isotopy sequence for vertical Legendrian flypes.]{The primary isotopy sequence used in the proof of Theorem~\ref{verticalflypes}.  The dashed arrow indicates an iteration of the preceding two steps.}
\label{vflypeproof}
\end{center}
\end{figure}

If $q_3-f_3=0$, then there are no horizontal twists adjacent to $\G$ in the figure.  This gives a subtangle consisting of $\G$ with an extra vertical twist below, which can be viewed as the tangle $(q_{2k+2}, q_{2k+1}^{f_{2k+1}}, q_{2k}^{f_{2k}},\ldots, (q_4+1)^{f_4}, 0)$.  An additional application of the induction hypothesis then implies the fourth step of the primary sequence.  

If instead $q_3-f_3\neq 0$, then we still have horizontal twists adjacent to $\G$.  We then set $a=q_3-f_3$ in the secondary sequence described above.  Applying the first three steps of the secondary sequence to the subtangle $(q_{2k+2}, q_{2k+1}^{f_{2k+1}}, q_{2k}^{f_{2k}},\ldots, q_4, q_3-f_3,1,0)$ then produces the desired subtangle $(q_{2k+2}, q_{2k+1}^{f_{2k+1}}, q_{2k}^{f_{2k}},\ldots, (q_4+1)^{f_4}, 0)$, as in the previous case.  The induction hypothesis now provides the fourth step of the secondary sequence, the remainder of which then completes the fourth step of the primary sequence. 

In either case, proceeding through the remaining steps of the primary sequence then produces the desired isotopy.  By alternating between the isotopy described above using the primary and secondary sequences, and a similar isotopy using their rotations by $180^\circ$ about the vertical axis of the page, we have that 
\[(q_{2k+2},q_{2k+1}^{f_{2k+1}},q_{2k}^{f_{2k}},\ldots, q_3^{f_3}, q_2,0)\simeq(q_{2k+2},q_{2k+1}^{f_{2k+1}},q_{2k}^{f_{2k}},\ldots, q_3^{f_3}, q_2^{f_2},0),\] completing the induction step in the even-length case.
\end{proof}

We will refer to the way in which pairs of endpoints of a tangle are connected by the individual strands as the \emph{connectivity type} of a tangle.  Specifically, we say that $\G_q$ is of \emph{connectivity type $0$, $\infty$, or $1$} if $\g_q^1$ connects $p_1$ to $p_2$, $p_3$, or $p_4$, respectively  (see Figure~\ref{connectivitytypes}).  As shown in~\cite{KL02}, given $q=P/Q$, with $P$ and $Q$ relatively prime, $\G_q$ is of connectivity type $0$ if $P$ is even, $\infty$ if $Q$ is even, or $1$ if both are odd.  Since the endpoints of a tangle remain fixed throughout any topological isotopy, the connectivity type of $\G_{q^f}$ will agree with that of $\G_q$ for any $f$.

\begin{figure}[ht]
\begin{center}
\resizebox{0.75\textwidth}{!}{\input{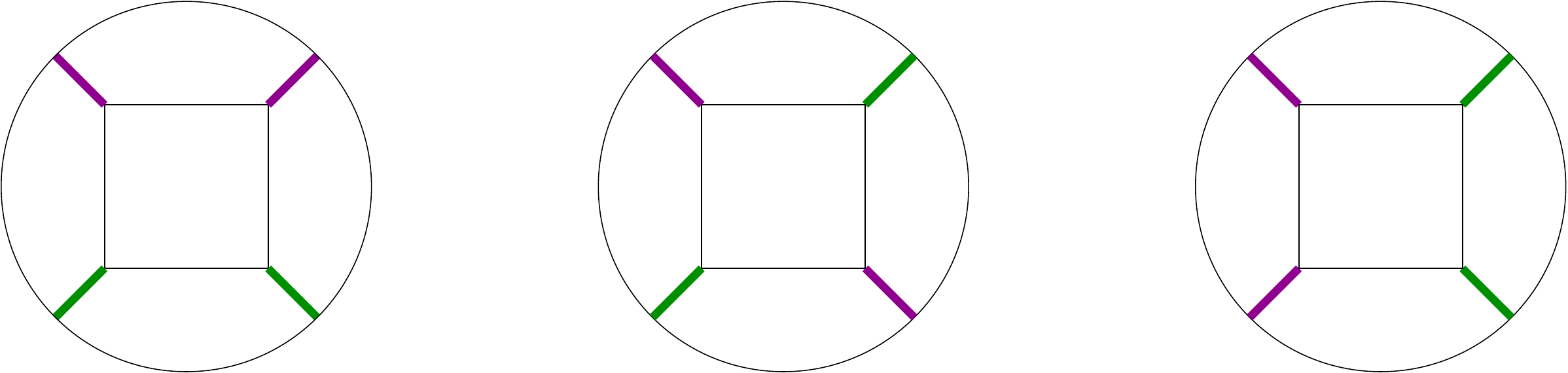_t}}
\caption[Connectivity types of rational tangles.]{Connectivity type 0 (left), 1 (center), and $\infty$ (right).}
\label{connectivitytypes}
\end{center}
\end{figure}

For any tangle $(q_n,q_{n-1}^{f_{n-1}},q_{n-2}^{f_{n-2}},\ldots,q_1^{f_1})$, let $\sigma(f)$ denote the sum of the exponents $f_j$ with $j$ odd, $j<n$.  Further, let $\sigma_{\infty}(f)$ denote the sum of such exponents for which the subtangle $(q_n,q_{n-1},\ldots, q_{j+1},0)$ is of connectivity type $\infty$.

\begin{infconnhorizontalflypes}
\label{infconnhorizontalflypes}
For any $n$, let $\G_{q^f}$ and $\G_{q^g}$ be Legendrian rational tangles of length $n$.  If $\sigma_{\infty}(f)\neq\sigma_{\infty}(g)$, then $\G_{q^f}$ and $\G_{q^g}$ are not Legendrian isotopic.
\end{infconnhorizontalflypes}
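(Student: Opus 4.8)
The plan is to extract from Lemma~\ref{cardinalitylemma} a single integer-valued Legendrian isotopy invariant that is strictly monotone in $\sigma_{\infty}$, so that distinct values of $\sigma_{\infty}$ force non-isotopic tangles. The invariant will be the strandwise count $|E_{q^f}^{2}|$ of tagged elliptics lying along the strand $\gamma_q^2$ in the compressing disc $D_{q^f}$. Two structural facts underlie the argument. First, since $p_1$ is never moved by any horizontal or vertical twisting in the construction of $\G_q$, it is the top-left corner of every subtangle $(q_n,\ldots,q_{j+1},0)$; hence, whenever such a subtangle has connectivity type $\infty$, the two endpoints twisted at component $q_j$ both lie on the strand \emph{not} containing $p_1$, so every horizontal flype counted by $\sigma_{\infty}$ is a flype performed at self-crossings of $\gamma_q^2$ --- always the same strand. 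Second, by the counting scheme of Proposition~\ref{bdcount}, the sum $|E_{q^f}^{1}|+|E_{q^f}^{2}|$ equals the number of tagged dots of $\boxdot_{q^f}$ plus the number of shared dots, a quantity depending only on $q$; so tracking $|E_{q^f}^{2}|$ is equivalent to tracking $|E_{q^f}^{1}|$, and, by Theorem~\ref{verticalflypes} together with Lemma~\ref{cardinalitylemma}, it is automatically unchanged by vertical flypes.

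Next I would compute the effect of a single basic $f$-move. Via the correspondence of Section~\ref{sec:boxdottoptangles}, an $f$-move at level $j$ is a horizontal flype at a crossing of $q_j$, realized on $\boxdot_{q^f}$ as the reflection of the remainder rectangle $\r_{j+1}$ followed by its transposition with an adjacent square of $\r_j$, as in Section~\ref{sec:fmoves}. I would follow the tagged dots through this operation, recording for each tagged dot which vertical subdivision segment it caps and which strand that segment serves, using the bubbling/pole identification furnished in the proof of Lemma~\ref{bijectionlemma} to handle shared dots consistently. The claim to establish is: if $(q_n,\ldots,q_{j+1},0)$ has connectivity type $0$ or $1$, the reflection either fixes the two strands setwise (type $0$) or interchanges their portions inside $\r_{j+1}$ between configurations carrying equally many tagged dots (type $1$, using the order-two symmetry of the relevant rational subtangle), so $|E_{q^f}^{2}|$ is left unchanged; whereas if that subtangle has connectivity type $\infty$, the flype acts on the self-crossings of $\gamma_q^2$ and transfers a fixed positive number $c$ of tagged dots onto $\gamma_q^2$, independently of $j$ and of the remaining components of $f$. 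Iterating over all components of the vector then yields $|E_{q^f}^{2}| = |E_{q}^{2}| + c\,\sigma_{\infty}(f)$ with $c>0$.

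With this formula in hand the conclusion is immediate: if $\G_{q^f}$ and $\G_{q^g}$ were Legendrian isotopic, Lemma~\ref{cardinalitylemma} would give $|E_{q^f}^{2}|=|E_{q^g}^{2}|$, hence $c\,\sigma_{\infty}(f)=c\,\sigma_{\infty}(g)$ and, since $c\neq 0$, $\sigma_{\infty}(f)=\sigma_{\infty}(g)$. Contrapositively, $\sigma_{\infty}(f)\neq\sigma_{\infty}(g)$ gives $\G_{q^f}\nsimeq\G_{q^g}$. For even-length tangles one runs the identical argument with the discs $\wbD_{q^f}^{\ell}$ in place of $\wbD_{q^f}$, using the strandwise forms of Lemmas~\ref{cardinalitylemma} and~\ref{bijectionlemma} as noted in Section~\ref{sec:lemmas}.

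The main obstacle is the combinatorial count in the second paragraph: proving that a type-$\infty$ flype changes the strandwise tagged-elliptic count by a constant independent of the level $j$ and of the remaining flype data, and --- more delicately --- that a type-$1$ flype is count-neutral despite interchanging the two strands' portions of a subtangle, which requires showing that a type-$1$ rational subtangle (and each of its flyped versions) carries the same number of tagged elliptics on each of its two strands. If this last point resists a direct combinatorial proof, the fallback is to strengthen the invariant: use the order-preserving bijections $\beta^{\ell}$ of Lemma~\ref{bijectionlemma}, together with the fact that contactomorphisms preserve the signs of singularities relative to the chosen co-orientation, and compare the signed sequences of tagged elliptics along $\gamma_q^1$ and $\gamma_q^2$ rather than their bare cardinalities, which isolates the contribution of the self-crossing flypes counted by $\sigma_{\infty}$.
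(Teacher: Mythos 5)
Your overall strategy is the same as the paper's: compare the strandwise tagged-elliptic counts $|E^{\ell}_{q^f}|$ via Lemma~\ref{cardinalitylemma}, after showing that horizontal flypes at type-$\infty$ subtangles change these counts monotonically in $\sigma_{\infty}$ while all other flypes leave them unchanged; the final contradiction is exactly the paper's. However, the step you defer as ``the main obstacle'' is in fact the entire content of the paper's proof, and your plan for closing it rests on a misconception that would misdirect you in the type-$1$ case. A flype is an ambient isotopy fixing the endpoints of the subtangle, so it never ``interchanges the two strands' portions inside $\r_{j+1}$'': the arc of the subtangle lying on $\g_q^1$ is still on $\g_q^1$ after the reflection, and since reflection across the horizontal bisector carries local extrema of each arc bijectively to local extrema of its image, the tagged dots interior to $\r_{j+1}$ contribute the same strandwise counts before and after. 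Consequently no lemma asserting that a type-$1$ subtangle carries equally many tagged elliptics on each strand is needed (nor should you expect such a symmetry statement to be provable in general); the only dots whose strand membership can change are the $2q_j$ tagged dots on the vertical segments at level $j$ itself.

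The count at those junction dots is what the paper actually carries out (Figure~\ref{infconnflypesproof}): since $\g_q^1$ enters each remainder rectangle of $\boxdot_q$ at its top-left corner, for a type-$\infty$ subtangle both arcs leaving the right side of $\r_{j+1}$ lie on $\g_q^2$, so the two tagged dots on the segment separating $\r_{j+1}$ from the adjacent crossing lie on $\g_q^2$ before the flype, whereas after the flype the two dots on the segment between the transposed crossing and the reflected subtangle lie on $\g_q^1$. Thus each type-$\infty$ flype moves exactly two elliptics from the second strand to the first---note this is opposite to your claim that dots are transferred onto $\g_q^2$, though the sign is immaterial to the contradiction. For types $0$ and $1$ the two arcs leaving the right side of $\r_{j+1}$ lie on different strands, and a direct check shows each level-$j$ segment carries one tagged dot of each strand both before and after the flype, giving neutrality. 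With this local computation in place (and your observation that vertical flypes are neutral by Theorem~\ref{verticalflypes} together with Lemma~\ref{cardinalitylemma}), your iteration formula becomes $|E^1_{q^f}|-|E^1_{q}|=2\sigma_{\infty}(f)$ and the argument closes as you describe; without it, the proposal asserts rather than proves the inequality on which the theorem turns.
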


\begin{figure}[ht]
\begin{center}
\resizebox{\textwidth}{!}{\input{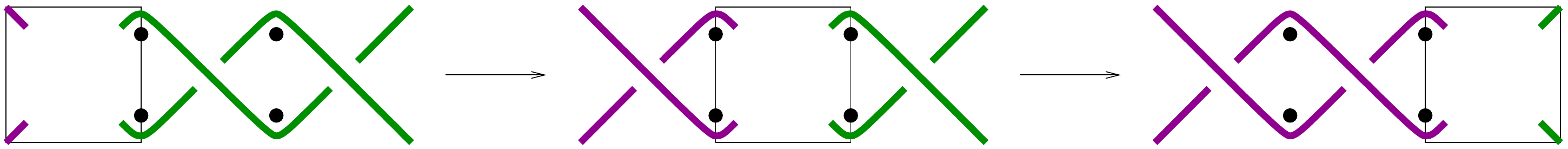_t}}
\caption[Horizontal flypes and connectivity type $\infty$.]{Horizontal flypes change elliptic counts if $\G$ is of connectivity type $\infty$.} \label{infconnflypesproof}
\end{center}
\end{figure}

\begin{proof}
Assume, without loss of generality, that $\sigma_{\infty}(f)<\sigma_{\infty}(g)$.  By construction, the strand $\g_q^1$ enters every remainder rectangle $\r_1$, $\r_2$, \dots, $\r_n$ of $\boxdot_q$ through its top left corner.  Thus, if the subtangle contained in any of these rectangles is of connectivity type $\infty$, the strand $\g_q^2$ must both enter and leave this rectangle through its right side.  Since each flype fixes the endpoints of the preceding subtangle, the iterative constructions of $\G_{q^f}$ and $\G_{q^g}$ from $\G_q$ ensure that each flype performed on any subtangle of connectivity type $\infty$ will produce a tangle with two more elliptic points on its first strand and two fewer on its second, as in Figure~\ref{infconnflypesproof}.   Using the notation of Section~\ref{sec:lemmas}, this implies that $|E_{q^g}^1|-|E_{q^f}^1| = |E_{q^f}^2|-|E_{q^g}^2|=2\big(\sigma_{\infty}(g)-\sigma_{\infty}(f)\big)$.  Since the cardinalities of $E_{q^f}^{\ell}$ and $E_{q^g}^{\ell}$ are distinct, Lemma~\ref{cardinalitylemma} implies that there cannot be a Legendrian isotopy between the given tangles.
\end{proof}

\begin{oddlengthhorizontalflypes}
\label{oddlengthhorizontalflypes}
For $n$ odd, let $\G_{q^f}$ and $\G_{q^g}$ be Legendrian rational tangles of length $n$.  If $\sigma(f)\neq\sigma(g)$, then $\G_{q^f}$ and $\G_{q^g}$ are not Legendrian isotopic.
\end{oddlengthhorizontalflypes}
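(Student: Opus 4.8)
The plan is to reduce to a situation governed by Lemma~\ref{bijectionlemma}, after stripping off the cases already covered by Theorems~\ref{verticalflypes} and~\ref{infconnhorizontalflypes}.

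First I would normalize the two tangles. By Proposition~\ref{trivialflypes} we may take $f_n=g_n=0$, and by Theorem~\ref{verticalflypes} we may take $f_j=g_j=0$ for every even $j$; these reductions replace each tangle by a Legendrian-isotopic one and leave the exponents $f_j$, $g_j$ with $j$ odd and $j<n$ --- hence the values $\sigma(f)$, $\sigma(g)$, $\sigma_\infty(f)$, $\sigma_\infty(g)$ --- untouched. Thus $\G_{q^f}$ and $\G_{q^g}$ are each obtained from $\G_q$ purely by horizontal flypes at odd positions $j<n$, and $\sigma(f),\sigma(g)$ are just the total numbers of such flypes. Now split into cases: if $\sigma_\infty(f)\neq\sigma_\infty(g)$ then Theorem~\ref{infconnhorizontalflypes} already yields $\G_{q^f}\nsimeq\G_{q^g}$, so assume $\sigma_\infty(f)=\sigma_\infty(g)$. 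Then the discrepancy $\sigma(g)-\sigma(f)\neq 0$ is carried entirely by horizontal flypes at odd positions $j<n$ whose preceding subtangle $(q_n,q_{n-1},\ldots,q_{j+1},0)$ has connectivity type $0$ or $1$; call these the \emph{interior} flypes, and note that they leave the strandwise elliptic counts unchanged, so Lemma~\ref{cardinalitylemma} alone cannot see them.

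The heart of the argument is to detect interior flypes through the ordering data supplied by Lemma~\ref{bijectionlemma}. Since $n$ is odd and $\sigma(f)\neq\sigma(g)$ forces $n\geq 3$, Proposition~\ref{regcontfrac} gives $q_n\geq 2$, so each of $D_{q^f}$ and $D_{q^g}$ carries $q_n-1\geq 1$ shared elliptics, every one of which lies on both strands of the corresponding tangle. Every horizontal flype at a position $j<n$ realizes, on the box-dot diagram, the reflect-and-transpose $f$-move of Section~\ref{sec:fmoves}, which reflects the remainder rectangle $\r_{j+1}$ --- hence the entire subtangle $(q_n,\ldots,q_{j+1},0)$, including the whole initial block of $q_n$ horizontal twists and all $q_n-1$ shared elliptics --- and slides it past an adjacent square. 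I would trace this move through the two strands to show that an interior flype changes the ordinal position of at least one shared elliptic inside the ordered set $E_{q^f}^{\ell}$ of tagged elliptics along $\g_q^{\ell}$, for some $\ell\in\{1,2\}$, while the type-$\infty$ flypes (equal in number, though possibly at different levels, for $f$ and $g$) and the already-removed vertical flypes contribute matching shifts. A convenient way to package this would be to exhibit a numerical functional of the two strandwise orderings of the shared elliptics --- for instance the sum, over all shared elliptics, of the number of tagged elliptics preceding it along $\g_q^1$ --- that strictly increases under each interior flype and is left unchanged in the aggregate by the common type-$\infty$ flypes. In any case the conclusion of this step is that the list of ordinal positions of the shared elliptics along $\g_q^1$ and along $\g_q^2$ differs between $\G_{q^f}$ and $\G_{q^g}$.

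Finally I would argue by contradiction. A Legendrian isotopy from $\G_{q^f}$ to $\G_{q^g}$ would, by Lemma~\ref{bijectionlemma}, produce a bijection $\beta\colon E_{q^f}\to E_{q^g}$ restricting to order-preserving bijections $\beta^{\ell}\colon E_{q^f}^{\ell}\to E_{q^g}^{\ell}$ and --- as in the proof of that lemma, via the transversality of the bubble equators --- carrying shared elliptics to shared elliptics. But by Lemma~\ref{cardinalitylemma} the sets $E_{q^f}^{\ell}$ and $E_{q^g}^{\ell}$ have equal cardinality, and an order-preserving bijection between finite totally ordered sets of equal cardinality is the unique order isomorphism; hence $\beta^{\ell}$ sends the $k$-th element of $E_{q^f}^{\ell}$ to the $k$-th element of $E_{q^g}^{\ell}$ and therefore preserves the ordinal position of every shared elliptic along each strand, contradicting the previous step. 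The main obstacle is exactly the bookkeeping in the third paragraph: verifying rigorously that an interior $f$-move genuinely displaces a shared elliptic within the strandwise order, and that the displacements produced by flypes performed at distinct levels $j$ --- including the interaction with the type-$\infty$ flypes --- accumulate rather than cancel. This is the order-theoretic counterpart of the count-based computation in the proof of Theorem~\ref{infconnhorizontalflypes}, and controlling the interaction between nested interior flypes and type-$\infty$ flypes at different levels is the delicate point.
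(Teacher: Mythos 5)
Your overall architecture is the paper's: detect the flypes through the strandwise ordinal positions of the shared elliptics and contradict the order-preserving bijections supplied by Lemma~\ref{bijectionlemma}. Your observation that shared elliptics must map to shared elliptics is in fact immediate from the statement of that lemma --- $\beta$ is a single well-defined map restricting to bijections $E_{q^f}^{\ell}\to E_{q^g}^{\ell}$, so it carries $E_{q^f}^1\cap E_{q^f}^2$ into $E_{q^g}^1\cap E_{q^g}^2$; the paper phrases the same contradiction as $\beta$ failing to be well-defined. So your final paragraph is sound, and the preliminary reductions via Proposition~\ref{trivialflypes} and Theorem~\ref{verticalflypes}, as well as the case split on $\sigma_{\infty}$, are harmless but unnecessary.

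The genuine gap is exactly the step you defer as ``the main obstacle'': you never establish how a horizontal flype displaces the shared elliptics within the strandwise orders, and you frame it (interior versus type-$\infty$ flypes, possible cancellation between levels) in a way that makes it look harder than it is while leaving the strict monotonicity of your proposed functional unproved. The paper's proof consists precisely of filling this step, and it does so uniformly: since $\g_q^1$ enters every remainder rectangle $\r_1,\ldots,\r_n$ through its top left corner, a short induction (Figure~\ref{hflypeproof}) shows that \emph{every} horizontal flype, regardless of the connectivity type of the preceding subtangle, pushes the entire block of $q_n-1$ shared elliptics exactly one tagged elliptic further along $\g_q^1$. Consequently one meets exactly $\sigma(f)$ tagged elliptics along $\g_q^1$ before the shared block in $\boxdot_{q^f}$, and $\sigma(g)$ in $\boxdot_{q^g}$; the first shared elliptic sits at position $\sigma(f)+1$ versus $\sigma(g)+1$, and the contradiction with $\beta^1$ follows with no case split and no cancellation analysis, because all shifts along $\g_q^1$ have the same sign and simply accumulate to $\sigma(f)$. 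Until you prove this displacement statement (or your equivalent functional identity), your argument remains a plan rather than a proof at its central point.
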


\begin{proof}
Assume, without loss of generality, that $\sigma(f)<\sigma(g)$.  By construction, the strand $\g_q^1$ enters every remainder rectangle $\r_1$, $\r_2$, \dots, $\r_n$ of $\boxdot_q$ through its top left corner.  A simple induction argument then shows that each successive flype in the iterative construction of $\G_{q^f}$ from $\G_q$ will move the set of $q_n-1$ shared elliptics one point further along the first strand of the tangle, as shown in Figure~\ref{hflypeproof}.  Thus, traversing $\g_q^1$ through $\boxdot_{q^f}$, we will first encounter $\sigma(f)$ tagged dots, followed by the set of $q_n-1$ shared dots.  Similarly, we will encounter $\sigma(g)$ tagged dots along $\g_q^1$ before the $q_n-1$ shared dots in $\boxdot_{q^g}$.

Now, suppose that $\G_{q^f}$ and $\G_{q^g}$ are isotopic.  By Lemma~\ref{bijectionlemma}, we then have a bijection $\beta$ from $E_{q^f}$ to $E_{q^g}$ which restricts to order-preserving bijections $\beta^{\ell}$ from $E_{q^f}^{\ell}$ to $E_{q^g}^{\ell}$, for $\ell\in\{1,2\}$.  In particular, the first shared elliptic in $E_{q^f}^1$ will be mapped to the $(\sigma(f)+1)^{\textrm{th}}$ elliptic point in $E_{q^g}^1$ under $\beta^1$.  However, since $\sigma(f)<\sigma(g)$, this image will not be a shared elliptic in $E_{q^g}^1$, implying that the image of this same shared elliptic under $\beta^2$ must be a distinct point in $E_{q^g}$, contradicting that $\beta$ was well-defined.
\end{proof}

\begin{figure}[ht]
\begin{center}
\resizebox{0.8\textwidth}{!}{\input{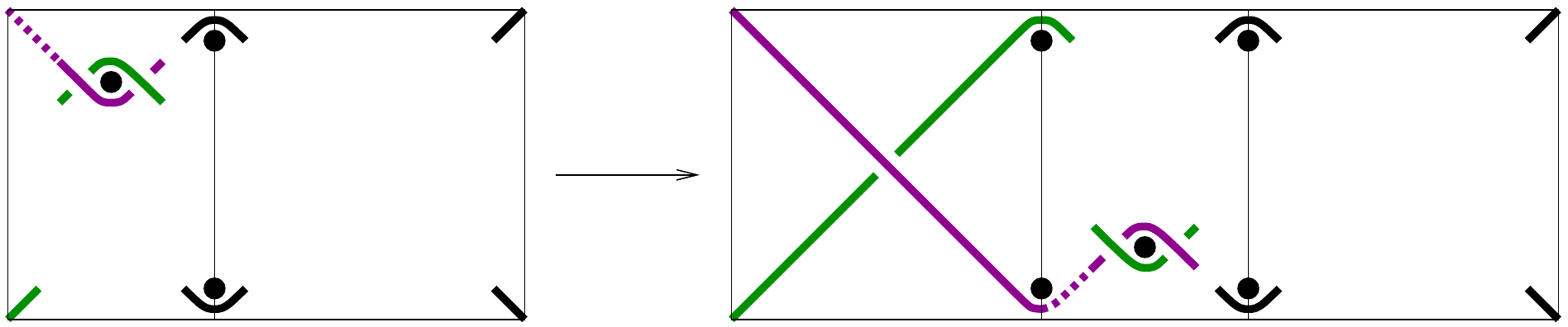_t}}
\caption[The effect of flyping on the shared elliptics.]{Each horizontal flype moves the set of shared elliptics one point further along the strand $\g_q^1$.} 
\label{hflypeproof}
\end{center}
\end{figure} 

\section{Conclusion}
\label{sec:conclusion}
We close with a few remarks regarding the limitations of our techniques and directions for further study.  Combining Theorems~\ref{infconnhorizontalflypes} and~\ref{oddlengthhorizontalflypes} provides an incomplete hierarchy in classifying odd-length tangles.  Specifically, tangles which contain different numbers of horizontal flypes are not isotopic.  Neither are tangles which contain the same number of horizontal flypes in total, but with different numbers of horizontal flypes applied to subtangles of connectivity type $\infty$.  However, if two tangles contain the same number of horizontal flypes, with the same number performed at such subtangles, the classification is presently unknown in any general form.  

To clarify this last remark, we provide a few examples, all illustrated in Figure~\ref{finalexamples}.  The tangles $(2,1,1^1,2,1)$ and $(2,1,1,2,1^1)$ are indistinguishable by our techniques, as the shared elliptics in each lie in the same positions along either strand of the tangles.  It can be shown, however, that these tangles are not isotopic by considering their strandwise classical invariants (cf. Section~\ref{sec:classicalinvariants}).  The same is true of $(3,1,1^1,2,1)$ and $(3,1,1,2,1^1)$, where each horizontal flype is performed at a distinct subtangle of connectivity type $\infty$.  However, the tangles $(2,1,2^1,1,1)$ and $(2,1,2,1,1^1)$ are in fact distinguishable by counting elliptics, as in the proof of Theorem~\ref{oddlengthhorizontalflypes}, though here $\beta^1$ would map the shared elliptic of the first tangle to the shared elliptic of the second.  Instead, $\beta^2$ would map the shared elliptic of the first tangle to a tagged, but not shared, elliptic in the second, providing the desired contradiction.  The strandwise invariants of these tangles are all zero, making our approach useful in this particular example.  The tangles $(2,1,2^2,2,2)$ and $(2,1,2,2,2^2)$ are indistinguishable by counting elliptics, have the same polynomial invariants defined by Traynor, and have zero strandwise classical invariants, showing that a complete classification will yet require more than known techniques.    

It is our suspicion that the converse of Theorem~\ref{verticalflypes} is true for odd-length tangles; that regular Legendrian rational tangles are Legendrian isotopic only if they have the same number of horizontal flypes at each horizontal twist component.  Indeed, this is what we sought to prove with this work, but the above examples illustrate the limitations of our approach to this end.  It is expected that a comprehensive analysis of the effect of a flype in repositioning elliptic singularities along both strands of a tangle will provide a further refinement of this work, allowing us to extend our results to a class of tangles which contains the distinguishable example above, though the combinatorics involved in such an analysis are intimidating.  Furthermore, the classification of even-length tangles beyond Theorems~\ref{verticalflypes} and~\ref{infconnhorizontalflypes} is still an open question.  The absence of shared elliptics in this case is prohibitive, from the perspective of Theorem~\ref{oddlengthhorizontalflypes}.

While the completion of this classification is one avenue of further work, there are two other fundamental questions associated to rational tangles that would be interesting to examine in greater depth.  The first is how our classification corresponds to the classification of Legendrian knots and links obtained as the closures of regular Legendrian rational tangles.  Current work is being done to compare these classifications wherein it is already clear that the notion of Legendrian tangle isotopy is much more restrictive than that of general Legendrian isotopy.  Specifically, there are lots of examples of tangles which are not Legendrian isotopic, but whose closures are.  The other, more difficult, question is how our classification of rational tangles can contribute to a similar classification of algebraic tangles---tangles which are formed by gluing together the endpoints of several rational tangles in some fashion.  It is not immediately clear how the compressing discs we constructed can be adapted to address either of these questions, though the ability to use characteristic foliations as distinguishing invariants, under suitable restrictions, still provides a new perspective on such future research.

\begin{figure}[ht]
\begin{center}
\includegraphics[width=0.85\textwidth]{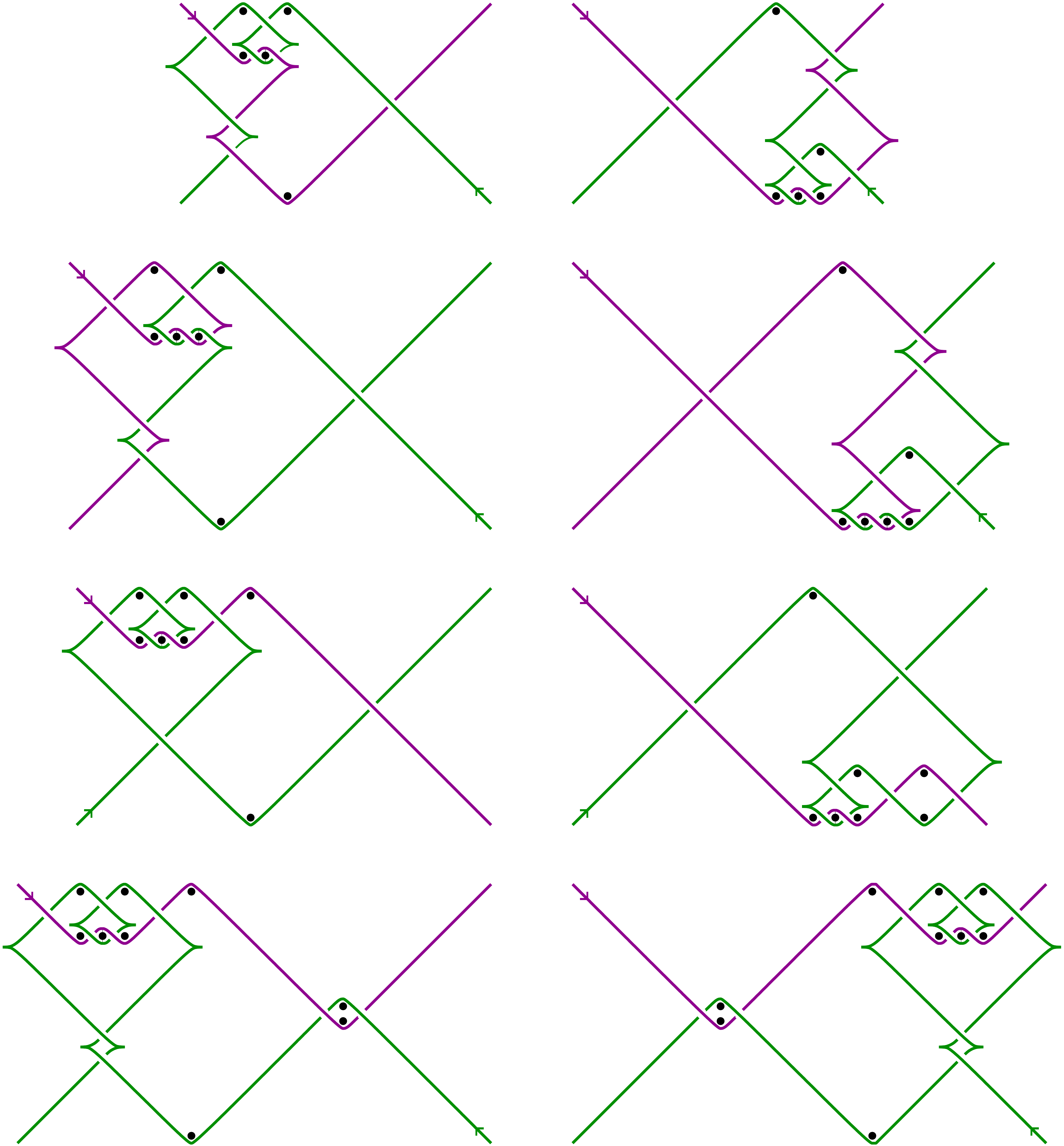}
\caption[Final examples of horizontal flypes.]{The tangles $(2,1,1^1,2,1)$, $(2,1,1,2,1^1)$, $(3,1,1^1,2,1)$, $(3,1,1,2,1^1)$, $(2,1,2^1,1,1)$, $(2,1,2,1,1^1)$, $(2,1,2^1,2,1)$, $(2,1,2,2,1^1)$.}
\label{finalexamples}
\end{center}
\end{figure}

\bibliographystyle{amsplain}
\bibliography{ClassLRTs}

\end{document}